\newtheorem{thm}{Theorem}[section]
\newtheorem{cor}[thm]{Corollary}
\newtheorem{lem}[thm]{Lemma}
\newtheorem{prop}[thm]{Proposition}
\theoremstyle{definition}
\theoremstyle{remark}
\newtheorem{rem}[thm]{Remark}
\newtheorem{exam}[thm]{Example}
\numberwithin{equation}{section}
\long\def\forget#1\forgotten{}
\newcommand{\eps}{\varepsilon}
\newcommand{\fp}{\mathfrak{p}}
\newcommand{\fP}{\mathfrak{P}}
\newcommand{\fq}{\mathfrak{q}}
\newcommand{\mQ}{\mathbb{Q}}
\newcommand{\mS}{\mathcal{S}}
\newcommand{\mP}{\mathbb{P}}
\newcommand{\Z}{\mathbb{Z}}
\DeclareMathOperator\charak{char}
\DeclareMathOperator\ld{ld}
\DeclareMathOperator\ed{ed}
\DeclareMathOperator\pd{pd}
\DeclareMathOperator\hgd{hgd}
\newcommand{\ra}{\rightarrow}
\DeclareMathOperator\Gal{Gal}
\def\({\left(}
\def\){\right)}
\newcommand\divides{\mid}
\newcommand\ndivides{\nmid}
\newcommand\oline[1] {{\overline{#1}}}
\newcommand\Hom{{\operatorname{Hom}}}
\newcommand\Aut{{\operatorname{Aut}}}
\renewcommand\Im{{\operatorname{Im}}}
\newcommand\HLG{{\operatorname{H}}}
\newcommand\ZLG{{\operatorname{Z}}}
\begin{document}

\title[The local dimension of a finite group] 
{The local dimension of a finite group over a number field} 

\author{Joachim K\"onig}
\address{Department of Mathematics Education, Korea National University of Education, Cheongju, South Korea}

\def\technion{Department of Mathematics, Technion - IIT, Haifa, Israel}
\def\ann{Department of Mathematics, University of Michigan, Ann Arbor}
\author{Danny Neftin}
\address{\technion}
\begin{abstract} 
Let $G$ be a finite group and $K$ a number field. We construct a $G$-extension $E/F$, with $F$ of transcendence degree $2$ over $K$, that specializes to all $G$-extensions of $K_\fp$, where $\fp$ runs over all but finitely many primes of $K$. 
If furthermore $G$ has a generic extension over $K$, we show that the extension $E/F$ has the so-called Hilbert--Grunwald property. These results are  compared to the notion of essential dimension of $G$ over $K$, and its arithmetic analogue.  
\end{abstract}

\maketitle

\section{Introduction}\label{sec:intro}
The essential dimension $\ed_K(G)$ of a finite group $G$ is a central notion in algebra which  measures the complexity of all $G$-extensions of overfields of a given field $K$ \cite{BR}. 
More precisely, $\ed_K(G)$ is the minimal integer $d\geq 0$ for which there exists a $G$-{\it extension} $E/F$, of a field $F$ of transcendence degree $d$ over $K$, such that every $G$-extension of $L$, for every field $L\supseteq K$, is a specialization of $E/F$ (over $K$). 
Here, a {$G$-extension} $E/F$ is an extension of \'etale algebras with Galois group $G$, and specialization is induced by taking residue extensions, as explained in \S\ref{sec:notation}. 

The {\it (essential) parametric dimension}
$\pd_K(G)$   is an analogous notion in arithmetic geometry which measures the complexity of all $G$-extensions of $K$ itself. More precisely, 
$\pd_K(G)$  is the minimal integer $d\geq 0$ for which there exist finitely\footnote{The motivation behind allowing finitely many $G$-extensions as opposed to one comes from rational connectedness, see Example \ref{exam:finite} and Remark \ref{rem:finite}.}  many $G$-extensions $E_i/F_i$, $i=1,\dots, r$, with $F_i$ of transcendence degree $d$ over $K$, such that every $G$-extension of $K$ is a specialization of $E_i/F_i$ for some $i$. 
In general, $\pd_K(G)\leq \ed_K(G)$, and  $\pd_K(G) < \ed_K(G)$ is possible even over the rationals $K=\mathbb Q$, see {Appendix} \ref{sec:parametric}. Over number fields $K$, the inequality  $\pd_K(G)>1$ is known for various classes of groups $G$, see \cite{Deb,KL,KLN, Koe19},  but in general  $\pd_K(G)$ remains a mysterious invariant. 

In the case where $K$ is a number field, the natural arithmetic approach to parametrizing $G$-extensions  $L/K$ is to first parametrize their completions $L_\fp:=L\otimes_K K_\fp$, where $\fp$ runs over primes of $K$. It is conjectured\footnote{This is a special case of the more general conjecture in \cite{CT} relevant to $G$-extensions, cf.\ \cite{Har,DLAN,LA}.} \cite{CT} that for every finitely many $G$-extensions $L^{(\fp)}/K_\fp$, $\fp\in S$, there exists a $G$-extension $L/K$  
such that $L\otimes_K K_\fp\cong L^{(\fp)}$ for all $\fp\in S\setminus T$, where $T=T(G,K)$ is a finite set of ``bad primes" depending only on $G$ and $K$. In this setting, the set of $G$-extensions $L^{(\fp)}/K_\fp, \fp\in S$ is  called a {\it Grunwald problem}, $L$ is called a {\it solution} to it, and the conjecture is known for many solvable groups $G$, see \cite{HW} and \cite[Theorem 9.5.9]{NSW}. The conjecture implies that finitely many $G$-extensions $E_i/F_i$, $i=1,\ldots,r$ which specialize to every $G$-extension of $K$, also specialize to every $G$-extension of $K_\fp$, for all but finitely many primes $\fp$ of $K$. This motivates defining 
the {\it (essential) local dimension} $\ld_K(G)$ of $G$ over $K$ as the minimal integer $d\geq 0$ for which there exist finitely many $G$-extensions $E_i/F_i$, $i=1,\ldots,r$, with $F_i$ of transcendence degree $d$ over $K$, such that every $G$-extension of $K_\fp$ is a specialization of some $E_i/F_i$, for all but finitely many primes $\fp$ of $K$. 

Grunwald problems are a topic of independent interest inspired by the inverse Galois problem (IGP) and the crossed product construction \cite[\S 11]{ABGV}. To parametrize solutions to Grunwald problems, we say that extensions $E_i/F_i$, $i=1,\ldots,r$ have the {\it Hilbert-Grunwald property} if
 every Grunwald problem $L^{(\fp)}/K_\fp, \fp\in S$ for $G$, with $S$ disjoint from a finite set of primes $T=T(E_1/F_1,\ldots,E_r/F_r)$, has a solution within the set of specializations 
of $E_i/F_i$, $i=1,\ldots,r$. 
In this context, the quantity of interest is the {\it Hilbert--Grunwald dimension} $\hgd_K(G)$ of $G$ over $K$, that is,  the minimal integer $d\geq 0$ for which there exist finitely many $G$-extensions $E_i/F_i$, $i=1,\dots, r$, with $F_i$ of transcendence degree $d$ over $K$, having the Hilbert--Grunwald property. 
Clearly, $\hgd_K(G)\geq \ld_K(G)$, and equality is possible in general, see Remark~\ref{rem:IGP}. Furthermore $\pd_K(G)\ge \hgd_K(G)$, conditional on the aforementioned conjecture.

The Hilbert--Grunwald property was first studied  by 
D\`ebes--Ghazi \cite{DG12} for a single $K$-regular $G$-extension $E/K(t)$ and Grunwald problems consisting of unramified extensions. 
Considering ramified extensions, \cite{KLN}  shows that $\ld_K(G)>1$ and hence $\hgd_K(G)>1$ for every group $G$ containing a noncyclic abelian subgroup \cite{KLN}, cf.\ Appendix \ref{sec:1-dim}. 
The notion of Hilbert--Grunwald dimension was first alluded to
 in \cite[\S 4]{Koe19}, which gave examples of (elementary abelian) groups with $\hgd_{K}(G)\leq 2$ and arbitrarily large essential dimension, giving rise to the question whether or not this invariant could ever be larger than $2$.

We show that $\ld_K(G)\leq 2$ holds in general, and  $\hgd_K(G)\leq 2$ holds if $G$ has a generic extension over $K$, or equivalently {\cite[Thm.\ 5.2.5]{Led}} 
if there is a $G$-extension of a rational function field over $K$ specializing to every $G$-extension of every overfield~of~$K$.  
\vskip 0.3cm

\noindent{\bf Main Theorem. }{\it 
Let $K$ be a number field and $G$ be a finite group. {Then:}\\
(1) {$\ld_K(G)\leq 2$. More precisely,} there exists a {single} $G$-extension $E/F$, with $F$ of transcendence degree $2$ over $K$, such that every $G$-extension of $K_\fp$ is a specialization of $E/F$, for every prime $\fp$ of $K$ outside a finite set $T$. \\
(2) If $G$ admits a generic extension over $K$, then {$\hgd_K(G)\le 2$. More precisely,} 
(a) $F$ {in (1)} can be chosen purely transcendental over $K$, and (b) every Grunwald problem $L^{(\fp)}/K_\fp$, $\fp\in S\setminus T$ has a solution within the set of specializations of $E/F$. }

\vskip 0.5cm

We note that although the assumption of {suitable} local global principles would imply $\pd_K(G)=\ld_K(G)=2$, see~Remark \ref{rem:LGP}, we expect $\pd_K(G)$ to be arbitrarily large as $G$ varies over finite groups. 
The main theorem and this expectation give a quantitative meaning to the assertion that the ``local complexity" of the Galois theory of a given finite group is in general much less than the corresponding ``global complexity". However,  little is known about groups $G$ with $\pd_K(G)>2$. 

Previous work on parametric extensions mainly restricted to the case where the fields $F_i$, {in the definition of $\pd_K(G)$}, are purely transcendental extensions of $K$. {It is therefore} also natural to consider the notion of ``rational parametric dimension", defined in the same way but with the additional assumption that the fields $F_i$ are rational, cf.\ \cite{DKLN}.  
{In fact, the rational analogue of essential dimension is well-known as the {\it generic dimension} (see, e.g., \cite{Sal}, \cite[\S8.5]{Led}). Note that when varying over groups  $G$ {that admit} a generic extension over $K$,  the generic dimension can be arbitrarily large, while it is unknown if the rational parametric dimension can be larger than $2$.}

The heart of the proof of the main theorem,  given in Section \ref{sec:proof1},  constructs  extensions of transcendence degree $2$ over $K$ that specialize to all tamely ramified local extensions {of $K_\fp$} with prescribed Galois group $D$ and inertia group $I$ {(for all but finitely many primes $\fp$ of $K$)}. 
Previous works \cite{Beck,KLN} give necessary conditions on such extensions depending on the local behavior at branch points. Here we give sufficient conditions on such extensions to maintain this local behavior under specialization, cf.\ Proposition \ref{prop:induct}. 
These conditions are achieved by the construction of Section \ref{sec:proof1}. Namely, we construct a transcendence degree $1$ extension that specializes to all unramified local extensions with group $D/I$ based on specialization methods of \cite{DG12}. We embed this extension into an extension of transcendence degree $2$, whose completion has Galois group $D$, inertia group $I$, and satisfies the above sufficient conditions, see Corollary \ref{thm:construct-2}. 

The constructed extensions $E/F$ in fact have the stronger property of  specializing to all $G$-extensions of $K'_{\fp'}$ for all but finitely many primes $\fp'$ of $K'$, for every finite extension $K'/K$. Thus, they relate to the notion of ``arithmetic dimension" introduced by  O'Neil, Bernstein, and Mazur, see Section \ref{sec:ext}.

The assumed existence of a generic extension for $G$ over $K$ in part (2) is  equivalent to the retract rationality of the variety $X:=\mathbb A^n/G$, for  $G\leq S_n$ \cite[Theorem 5.2.3]{Led}. 
This property is known to hold for many groups, cf.\ \cite{Led}, and most classically for $G=S_n$. 
In fact, in the proof of the main theorem, this assumption on $X$ can be relaxed to a (two dimensional) weak approximation property, see Remark \ref{rem:WWA}. Moreover, upon  replacing the single $G$-extension $E/F$ in the main theorem with finitely many $G$-extensions $E_i/F_i$, $i=1,\ldots,r$, we expect that this weak approximation property would hold for many more groups $G$, see Remark \ref{rem:prediction}. Adding finitely many extensions  also allows specializing to the $G$-extensions of $K_\fp$, {at the remaining primes $\fp\in T$}, see Section \ref{sec:finitely}.

Finally, we note that the notion of local dimension can be defined analogously for others fields $K$ equipped with (infinite) sets of valuations. The specialization methods developed in {Section \ref{sec:specialize}, and the more basic Section \ref{sec:dec},} are expected to be applicable over various other fields $K$.  It would be interesting to see how invariants such as $\pd_K(G)$ or $\ld_K(G)$ change as $K$ varies, cf.\ Remark \ref{rem:ret}. 

\vskip 5mm
We are grateful to the referee for helpful comments. The first and second author were supported by the National Research Foundation of Korea (grant no.\ 2019\ R1C1C1002665) and the Israel Science Foundation (grant no.\ 577/15), respectively. 

\section{Preparations}\label{sec:prep}
\subsection{Notation and setup}\label{sec:notation}
Let $K$ be a field of characteristic $0$. Let $\oline K$ denote the algebraic closure of $K${;}  $\mu_e$ the $e$-th roots of unity in $\oline K$; and $G_K:=\Gal(\oline K/K)$ the absolute Galois group of $K$. 
Although Section \ref{sec:intro} refers to $G$-extensions of \'etale algebras, throughout the rest of the paper, unless  explicitly mentioned, a $G$-extension refers to a $G$-extension of fields, cf.\ Section \ref{sec:etale} for the relation between the two.
We write $I\lhd R$ to denote that $I$ is an ideal of a ring $R$. In case  $K$ is the fraction field of a Dedekind domain $R$,  and $\fp\lhd R$ is a prime ideal of $R$, 
denote by $K_\fp$ the completion of $K$ at $\fp$.

\subsubsection{Morphisms and function fields}
{Let $F/K$ be a function field in finitely many variables with (exact) {\it constant field} $K$, i.e., such that $F\cap \overline{K} = K$. 
Let} $\varphi: G_{F}\to G$ be an epimorphism. The fixed field $E$ of $\ker\varphi$ is then Galois over $F$ with Galois group $G$. 
We may then choose a morphism $f:X\to Y$ of (smooth quasiprojective irreducible) varieties over $K$ such that {$E/F$ is isomorphic to the extension $K(X)/K(Y)$ of function fields of $X$ and $Y$, respectively}.
Note that 
$Y$ is absolutely irreducible, whereas $X$ is irreducible over $K$, but not necessarily absolutely irreducible.
For short, call such $f$ a {\it Galois cover} with group $G$. 
The epimorphism $\varphi$ factors through an epimorphism $\varphi': \pi_1(Y')_K\to G$ from the $K$-fundamental group of $Y':=Y\setminus D$ (where $D\subset Y$ is the branch divisor of $f$). In this setting, one says that $\varphi$ (or $\varphi'$) {\it represents}~$f$.

The {\it morphism of constants} 
$\varphi_K$ of $\varphi$ (resp.\ $\varphi'$) is the natural projection $\varphi_K: G_K\ra  G_K/N$, where $N$  is the image of $\ker\varphi$ under the natural projection $G_F\ra G_K$ (resp.\ $\pi_1(Y')_K\ra G_K$). The fixed field of $\ker\varphi_K$ is then the {constant field} of the fixed field $E$ of $\ker\varphi$. 
We shall say that $\varphi$ (or $E/F$, or $f$) is {\it $K$-regular}  if $\ker\varphi_K = G_K$ (or equivalently if $\varphi(G_{\oline KF})=G$, or if $\varphi( \pi_1(Y')_{\overline{K}})=G$).  
This ensures that $\ker\varphi\cdot  \pi_1(Y')_{\overline{K}} =  \pi_1(Y')_K$, and hence $E=\oline F^{\ker\varphi}$ is linearly disjoint from $\oline K$.

{Finally, note that, via Galois correspondence, one also has a correspondence of degree-$d$ (not necessarily Galois) covers $f:X\to Y$ and homomorphisms $\varphi: \pi_1(Y')_K \to S_d$, 
 for every $d\in \mathbb{N}$.} Under this correspondence, the covers $f$ with $X$ irreducible correspond to those $\varphi$ with $\varphi(\pi_1(Y')_K)\le S_d$ transitive. Cf.\  \cite[\S 2]{DL}.

\subsubsection{Specialization}
Given a  Galois cover $f:X\to Y$ over $K$ and a $K$-rational point $t_0\in Y(K)$ away from the branch divisor of $f$, one has a section $s_{t_0}: G_K\to  \pi_1(Y')_K$ to the projection  $\pi_1(Y')_K\to G_K$,
 and thus a {\it specialization morphism}
 $\varphi_{t_0} =\varphi'\circ s_{t_0}: G_{K}\to G$ (well defined up to conjugation). Denote the function field extension corresponding to $f:X\to Y$ by $E/F$ and the fixed field  of $\ker(\varphi\circ s_{t_0})$ by $E_{t_0}$. The {\it specialization} $E_{t_0}/K$ of $E/F$ at $t_0$ is then the residue extension at some (and, as  $f$ is Galois, any) point of $X$ in $f^{-1}(t_0)$. 
 We  also refer to $E_{t_0}/K$ as the specialization 
 {at} the $K$-rational place on $F$ corresponding to $t_0\in Y(K)$. The specialization $E_{t_0}/K$ is Galois with  group a subgroup of $G$.

 In case $F$ is the fraction field of a Dedekind domain $R$, and the valuation ideal $\fp\lhd R$ of $\nu$ is a nontrivial ideal of $R$, the specialization $E_{t_0}$ coincides with the residue field $S/\fP$, where $S$ is the integral closure of $R$ in $E$, and $\fP$ is a prime of $S$ lying over $\fp$.
 The key case for us is when $F=K(t)$ is a rational function field, and $R=K[t]$. For any $t_0\in \oline K$, 
  denote by $t\mapsto t_0$ the place corresponding to the prime $(m_{t_0})\lhd K[t]$, where $m_{t_0}$ is the minimal polynomial of $t_0$ over $K$. 
The specialization $E(t_0)_{t_0}/K(t_0)$ of 
$E(t_0)/F(t_0)$ at $t\mapsto t_0$ in the above sense is then the same as the residue extension at (any prime ideal extending) $m_{t_0}$ in $E/F$. 

\subsubsection{\'Etale algebras}\label{sec:etale}
In Section \ref{sec:intro}, the notion of a $G$-extension refers to an \'etale algebra $E$  that is Galois over $F$ with  group $G$. 
Such an algebra is induced from an $H$-extension $E_1/F$ of fields for $H\leq G$, cf.\ \cite[\S 4.3]{Led} for the following and further details. In particular, it is a direct sum $E=E_1\oplus\cdots\oplus E_r$ of fields with $G$ permuting the fields {$E_1,\ldots,E_r$} transitively, so that 
these are mutually $F$-isomorphic{. The $K$-isomorphism class of the extensions $E_1,\ldots,E_r$ of $K$} is called the {\it field underlying} $E$. 
Note that $E_1$ is Galois over $K$ with group $H$, where $H$ is the stabilizer of $G$ in its action on $\{E_1,\ldots,E_r\}$. The stabilizer of $E_i$ is then $\sigma H\sigma^{-1}$, for $\sigma \in G$ with  $\sigma(E_1)=E_i$, 
{so that the Galois group over $K$ of the underlying field is identified with a subgroup of $G$ up to conjugation.} 

Letting $L/K$ denote the specialization of $E_1/F$ at a $K$-rational place of $F$, we deduce that $L/K$ is Galois with group a subgroup of $H$. The specialization of the \'etale algebra $E/F$, referred to in Section \ref{sec:intro}, is then the $G$-extension of \'etale algebras induced from $L/K$.

\subsection{ID pairs}\label{sec:id-pairs}
Let $K$ be the fraction field of a Dedekind domain $R$ of characteristic $0$. 
A group $D$ and a subgroup $I$ are called an {\it ID (inertia-decomposition) pair over $K$} if 
$D$ appears as {the} Galois group of a tamely ramified field extension $T_\fp/K_\fp$ with inertia group $I$ for some prime $\fp$ of $R$. 
 Note that as part of an ID pair, the embedding $I\ra D$ is also fixed. The pair is called split if the projection $D\ra D/I$ splits.  
For an ID pair $\pi=(I,D)$, let $P_K(\pi)$ denote the set of primes $\fp$ for which there exists a tamely ramified field extension $T_\fp/K_\fp$ with Galois group $D$ and inertia group $I$. It is well known  \cite[Theorem 16.1.1]{Efrat} that $I\lhd D$,  and that $D/I$ is the Galois group of the residue extension at $\fp$.

The following lemma summarizes basic properties of $(I,D)$ pairs. Let $C_D(I)$ denote the centralizer of $I$ in $D$.
Let  $\sigma_{q,e}\in\Aut(K(\mu_e))$ denote the automorphism given by $\sigma_{q,e}(\zeta)=\zeta^q$ for all $\zeta\in \mu_e$. If a prime $\fp$ of $K$ has finite residue field set $N(\fp):=|R/\fp|$ to be its norm, {otherwise set $N(\fp)=\infty$}.

\begin{lem}
\label{lem:id_pairs}
Let $\pi=(I, D)$ be an ID pair over a fraction field $K$ of a Dedekind domain $R$ of characteristic $0$. Then: 
\begin{enumerate}
\item There exists a (unique) homomorphism 
$\eta_\pi:D\ra \Gal(K(\mu_e)/K)$ such that   $I\cong \mu_e$ as $D$-modules, where $e=|I|$, and $I$ is a $D$-module via conjugation while $\mu_e$ is a $D$-module via $\eta_\pi$.  In particular, $\ker(\eta_\pi)=C_D(I)$.
\item Assuming further that $K$ is a number field, $D/I$ and $\Im(\eta_\pi)$ are cyclic. Moreover, a prime $\fp$ of $K$ with norm $q$ is in $P_K(\pi)$ if and only if  $\langle\sigma_{q,e}\rangle =\Im(\eta_\pi)$.  
\end{enumerate} 
\end{lem}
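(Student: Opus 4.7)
The plan is to reduce both parts to the classical structure of tame Galois extensions of local fields, together with the canonical character of the inertia group. For part (1), I would fix any prime $\fp_0 \in P_K(\pi)$ and a tame Galois extension $T/K_{\fp_0}$ realizing $\pi$, with uniformizer $\varpi_T$. The classical assignment $\theta(\sigma) := \sigma(\varpi_T)/\varpi_T \bmod \mathfrak{m}_T$ yields an isomorphism $I \xrightarrow{\sim} \mu_e(k_T)$, and a direct check shows it is $D$-equivariant when $D$ acts on the source by conjugation and on $\mu_e(k_T)$ through the natural residue Galois action of $D/I$. Since $\charak K = 0$ and $e$ is coprime to the residue characteristic by tameness, Teichm\"uller lifting identifies $\mu_e(k_T)$ $D$-equivariantly with $\mu_e \subset \oline{K_{\fp_0}}$, and fixing an embedding $\oline{K} \hookrightarrow \oline{K_{\fp_0}}$ identifies the latter with $\mu_e \subset \oline{K}$. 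The resulting action of $D$ then factors through $\Gal(K(\mu_e)/K)$, defining $\eta_\pi$; uniqueness follows from the faithful action of $\Gal(K(\mu_e)/K)$ on $\mu_e$, and $\ker\eta_\pi = C_D(I)$ is immediate from $I \cong \mu_e$ as $D$-modules.

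For part (2), I first observe that $D/I$ is cyclic because it is the Galois group of an extension of finite residue fields. Since tame inertia is cyclic, $I$ is abelian, so $I \subseteq C_D(I) = \ker\eta_\pi$, and thus $\eta_\pi$ factors through the cyclic group $D/I$, making $\Im(\eta_\pi)$ cyclic.

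The substantive step is the characterization of $P_K(\pi)$. The plan is to use the known presentation of the tame fundamental group of $K_\fp$ (with $N(\fp) = q$) as topologically generated by $\sigma_0, \tau_0$ subject to $\tau_0 \sigma_0 \tau_0^{-1} = \sigma_0^q$. This reduces the existence of a tame Galois extension of $K_\fp$ with group $D$ and inertia $I$ to the existence of $s \in I$ generating $I$ and $t \in D$ with $tI$ generating $D/I$ and $tst^{-1} = s^q$, where the last relation is equivalent, via the $D$-equivariance of $\theta$, to $\eta_\pi(t) = \sigma_{q,e}$. The ``only if'' direction then follows by taking $\tau \in D$ a lift of Frobenius: $\eta_\pi(\tau) = \sigma_{q,e}$, and since $\tau$ maps to a generator of $D/I$, $\sigma_{q,e}$ generates $\Im(\eta_\pi)$. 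The converse is the main obstacle: assuming $\langle\sigma_{q,e}\rangle = \Im(\eta_\pi)$, one must produce $t \in D$ with $\eta_\pi(t) = \sigma_{q,e}$ whose image in $D/I$ is a generator. Setting $f = |D/I|$ and $m = |\Im(\eta_\pi)|$ (so $m \mid f$), under the identification $D/I \cong \Z/f$ this reduces to finding $a \in \Z$ in a prescribed residue class modulo $m$ with $\gcd(a, f) = 1$, which follows from a direct Chinese Remainder argument using $m \mid f$.
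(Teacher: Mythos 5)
Your proof is correct and follows essentially the same route as the paper's: the tame inertia character $\theta$ to establish the $D$-module isomorphism $I\cong\mu_e$ and hence define $\eta_\pi$ in part (1), and Iwasawa's presentation of $\Gal(K_\fq^{\mathrm{tr}}/K_\fq)$ for the ``if'' direction of part (2). The one place where you supply more detail than the paper is the existence of $t\in D$ with $\eta_\pi(t)=\sigma_{q,e}$ and $tI$ generating $D/I$: the paper asserts this without argument, whereas your Chinese Remainder step (choosing $a\equiv c\pmod{m}$ with $\gcd(a,f)=1$, using $\gcd(c,m)=1$ for primes of $f$ dividing $m$ and CRT for the rest) cleanly justifies it.
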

\begin{proof}
Let $\fp\lhd R$ be a prime ideal and  $T_{\fp}/K_\fp$  a tamely ramified extension with Galois group $D$ and inertia group $I$. 

\noindent (1)
Since $T_\fp/K_\fp$ is tamely ramified, \cite[Prop.~6.2.1 and Cor.~16.2.7.(c)]{Efrat} show {the following: firstly, the residue field of $T_\fp^I$ contains the $e$-th roots of unity, and thus (via Hensel lifting) $\mu_e\subseteq T_\fp^I$. Secondly,} there is an isomorphism $\iota:I\ra \mu_e$ of $\Gal(T_\fp^I/K_\fp)$-modules, where $I$ is a module over $\Gal(T_\fp^I/K_\fp)=D/I$ via conjugation in $D$. In particular, $I$ is cyclic. 
 Identifying $I$ and $\mu_e$ via $\iota$, we obtain a homomorphism $D\ra\Aut(I)=\Aut(\mu_e)$ via the conjugation action in $D$. Let $\eta_\pi:D\ra\Aut(K(\mu_e))$ be its composition with the natural inclusion $\Aut(\mu_e)\ra\Aut(K(\mu_e))$, and let $V_\pi:=\Im(\eta_\pi)$ be its image. One {then} has $V_\pi\leq \Gal(K(\mu_e)/K)$ since  $D/I=\Gal(T_\fp^I/K_\fp)$ fixes $K_\fp$ and hence $K$. 
Since $\eta_\pi$ sends every element $d\in D$, that satisfies $\tau^d = \tau^q$ for all $\tau\in I$, to $\sigma_{q,e}\in \Gal(K(\mu_e)/K)$, the map $\eta_\pi$ is independent of the choice of $\iota$. Hence $\eta_\pi$ is the unique map with the required property. 
Note that the kernel of the conjugation action on $I=\mu_e$ is  $C_D(I)$, so that $\ker(\eta_\pi) = C_D(I)$. 

\noindent (2) 
 Since  $R/\fp$  is finite,  the Galois group $D/I$ of the residue extension  is cyclic, generated by  the Frobenius element  $\sigma\in D/I$. As $I\leq C_D(I)$, we have $\Im(\eta_\pi)\cong D/C_D(I)$ is cyclic. 

\noindent ``Only if part": Without loss of generality we may take $\fp$ and  $T_\fp/K_\fp$ to be as above. Let $\sigma\in D$ be a lift  of the Frobenius element  $\sigma\in \Gal(T_\fp^I/K_\fp) = D/I$. Since $\langle\sigma\rangle=D/I$ and $\sigma$ acts on $\mu_e$ by raising to the power of $q=N(\fp)$, the element $\eta_\pi(\sigma)=\sigma_{q,e}$ generates  $V_\pi$. \\
\noindent ``If part": 
 Let $K_\fq^{tr}$ be the maximal tamely ramified extension of $K_\fq$. By Iwasawa's theorem \cite[Theorem 7.5.3]{NSW}, 
its Galois group $G_{\fq}^{tr} =\Gal(K_\fq^{tr}/K_\fq)$ is isomorphic to the semidirect product $\langle \tau_\fq\rangle \rtimes \langle \sigma_\fq\rangle$ where $\langle\tau_\fq\rangle$ is the (procyclic) inertia group, $\langle \sigma_\fq\rangle\cong \hat{\mathbb Z}$ and  $\tau_\fq^{\sigma_\fq}=\tau_\fq^q$. Since $\langle \sigma_{q,e}\rangle = V_\pi$, there exists $d\in \eta_\pi^{-1}(\sigma_{q,e})$ such that $\langle dI\rangle = D/I$. Therefore the epimorphism $\phi_\fq:G_{\fq}^{tr}\ra D$, given by $\sigma_\fq\ra d$ and $\tau_\fq\ra \tau$ for a generator $\tau$ of $I$, is well defined, is an {epimorphism} and maps the inertia group to $I$. 
\end{proof}

 \subsection{Embedding problems}\label{sec:EP}
We describe the relevant facts from \cite[III.5]{NSW}. 
A finite embedding problem over a field $K$ is a pair $(\varphi:G_K\ra G,\varepsilon:E\ra G)$, where $\varphi$ is a (continuous) epimorphism, 
and $\varepsilon$ is an epimorphism of finite groups. 
A (continuous) homomorphism $\psi:G_K\ra E$ is called a \textit{solution} to $(\varphi, \varepsilon)$ if the composition  $\varepsilon \circ \psi$ {is equal to} $\varphi$. 
A solution $\psi$ is called a \textit{proper solution} if it is surjective. Two solutions $\psi_1,\psi_2$ to $(\varphi,\varepsilon)$ are called equivalent if there exists $e\in E$ such that $\psi_1(\sigma)=e^{-1}\psi_2(\sigma)e$ for all $\sigma\in G_K$.  A solution $\psi$ to $(\varphi,\varepsilon)$ over $L\supseteq K$ is a solution to $(\varphi_{L},\varepsilon)$, where $\varphi_{L}:G_L\ra G$ is an extension of $\varphi_{|\Gal(\oline K\cdot L/L)}$ to $G_L$. 
In case $L/K(t)$ is $K$-regular and $K_1/K$ is a finite extension, $\psi:G_L\ra E$ is called $K_1$-regular if $K_1$ is the fixed field of {the kernel of the morphism of constants $\psi_K$ of $\psi$}, in the notation of \S\ref{sec:notation}. In this case, $K_1/K$ is the constant extension of $\oline L^{\ker\psi}/L$. 

If $(\varphi,\varepsilon)$ has a proper solution $\psi$ (over $K$), then by identifying $E$ (resp.~$G$) with $\Gal(M/K)$ (resp.~$\Gal(L/K)$), where $M$ and $L$ are the fixed fields of $\ker\psi$ and $\ker\varphi$, respectively,  the restriction map $\Gal(M/K)\ra\Gal(L/K)$ coincides with $\varepsilon$.

If $A:=\ker\varepsilon$ is abelian, then $G$ acts on $A$ via conjugation in $E$, and hence $G_K$ acts on $A$ through $\varphi$. If further $(\varphi,\varepsilon)$ has a solution $\psi$,  Hoechsmann's theorem implies 
that the set of equivalence classes of solutions to $(\varphi,\varepsilon)$ is in bijection with classes in $\HLG^1(G_K,A)$. More specifically, the bijection attaches to a class $[\chi]$ of  $\chi\in \ZLG^1(G_K,A)$ the solution $\chi\cdot \psi:G_K\ra A$, $\sigma\mapsto \chi(\sigma)\psi(\sigma)$. 

The embedding problem $(\varphi,\varepsilon)$ is called {\it Brauer} if the kernel $A$ is  isomorphic to $\mu_n$ as a $G_K$-module. In particular,  $A\cong \mu_n$ is fixed by $\ker\varphi$. A general reference on Brauer embedding problems is \cite[Chapter IV,\S7]{MM}.
By Kummer theory, $\HLG^1(G_K,\mu_n)\cong K^\times/(K^\times)^n$. More explicitely, letting $M$ and $L$ be as above, we have  $M=L(\sqrt[n]{a})$ for  $a\in L^\times$ and the field fixed by $\ker(\chi\cdot \psi)$ is  $L(\sqrt[n]{ab})$  
for  $b\in K^\times/(K^\times)^n$ corresponding to $[\chi]$.

\subsection{Relating arithmetic and geometric decomposition groups}\label{sec:dec}

In this section, $K$ is the fraction field of a Dedekind domain $R$ of characteristic zero such that its  residue field at every place is perfect. We describe the decomposition group of a specialization $E_{t_0}/K$ at a prime {$\fp$ of $K$}, when $t_0\in K$ 
{and a branch point $t_1$} of a $K$-regular $G$-extension $E/K(t)$ {meet at $\fp$}, giving a more explicit version of \cite{KLN}. {Here, two values $a,b\in \mathbb{P}^1(\overline{K}$) are said to meet at $\fp$, if there is a prime $\fP$ of $K(a,b)$ such that {either $v_\fP(a)$ and $v_\fP(b)$ are both nonnegative and $v_{\fP}(a-b)>0$, or they are both negative and $v_{\fP}(1/a-1/b)>0$)}.} {Henceforth when $t_1=\infty$, the notation $t-t_1$ stands for $1/t$, and $K(t_1)$ for $K$.}

Let $D_{t_1}$ (resp., $I_{t_1}$) be the decomposition group (resp., the inertia group) of $E(t_1)/K(t_1,t)$ at the prime $(t-t_1) \, K[t-t_1]$, and let $D_{t_1,\fp'}$ be the decomposition group of $E(t_1)_{t_1}/K(t_1)$ at a prime $\fp'$ of $K(t_1)$, so that $D_{t_1,\fp'}$ is canonically identified with a subgroup of $D_{t_1}/I_{t_1}$. 
Let $\varphi:D_{t_1} \rightarrow D_{t_1}/I_{t_1}$ be the natural projection.

For a prime $\fp$ of $R$, denote by $v_\fp:K\ra \mathbb Z\cup\{\infty\}$ the associated normalized valuation, and by $|\cdot |_\fp:K\ra \mathbb R$ an associated absolute value. 
If $\fp$ is not in a finite set of bad places $\mathcal S_1$ and  $t_1$ is $\fp$-integral (that is, integral over the localization $R_{(\fp)}$ of $R$ at $\fp$), {then given any $t_0\in K$ meeting $t_1$ at $\fp$}, \cite[Lemma 3.2]{KLN} associates to it a {\it unique} degree $1$ prime $\fp'=\fp'(t_0,t_1,\fp)$ extending $\fp$ to $K(t_1)$ such that $v_{\fp'}(t_0-t_1)>0$, that is, $t_0$ and $t_1$ meet at $\fp'$. 
For a separable monic polynomial $P(t,x)\in R[t][x]$, denote by $\Delta_x(t)\in R[t]$ the {\it  discriminant} of $P$ with respect to $x$. 
Let $\mathcal S_R(P)$ be the finite set of primes $\fp$ of $R$ where $\Delta_x(t)$ mod $\fp$ is either $0$ or the number of pairwise distinct roots of $\Delta_x$ in an algebraic closure decreases upon reduction mod $\fp$. Equivalently, $\mathcal{S}_R(P)$ is the set of primes $\fp$ which either divide the leading coefficient of $\Delta_x(t)$ or are such that the mod $\fp$ reduction of the {\it {radical}} of $\Delta_x(t)$ becomes inseparable. {Here, the radical of $\Delta_x(t)$ is a separable polynomial with the same {irreducible factors} (without counting multiplicities) as $\Delta_x(t)$.}

\begin{thm}
\label{thm:kln}
Let $P(t,x)$ be a separable monic polynomial over $R[t]$ with splitting field $E$ such that $E/K(t)$ is a $G$-extension. 
Denote by $\mathcal{S}_0$ ($=\mathcal{S}_0(E/K(t))$)
the union of the set of primes of $R$ dividing $|G|$ with the finite set $\mathcal{S}_R(P)$. 
Suppose $\fp$ is a prime of $R$ not in $\mathcal{S}_0$, $t_0\in \mathbb{P}^1(K)$ is not a branch point of $E/K(t)$ while $t_1$ is a finite branch point. 
Suppose 
$v_{\fp'}(t_0-t_1)$ is positive and coprime\footnote{This assumption is made merely to simplify the assertion, see \cite[Theorem 4.2]{KLN} for the general version. Note that the description of $\mathcal S_R(P)$ given in this section applies to this generalized version.} to $|I_{t_1}|$. 
Then:
\begin{itemize}
\item[(1)] The decomposition group $D_{t_0,\fp}$ is conjugate in $G$ to $\varphi^{-1}(D_{t_1, \fp'})$. 
\item[(2)] The unramified part of the completion $E_{t_0}\cdot K_\fp$ of $E_{t_0}$ at $\fp$ is $(E(t_1))_{t_1}\cdot K(t_1)_{\fp'}$. 
\end{itemize}
\end{thm}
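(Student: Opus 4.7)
The strategy is to reduce the global computation of $D_{t_0,\fp}$ to a local analysis in a formal neighborhood of the branch point $t_1$, exploiting the tame ramification of $E\cdot K(t_1)/K(t_1,t)$ at $(t-t_1)$.

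First I would set up the local model near $t_1$. By definition, the completion of $E\cdot K(t_1)$ at a prime above $(t-t_1)$ has Galois group $D_{t_1}$ and inertia $I_{t_1}$. Since $\fp\notin\mathcal{S}_0$ implies $\fp\nmid|G|$, the integer $e:=|I_{t_1}|$ is invertible in $R/\fp$, so the ramification is tame. Writing $L:=(E(t_1))_{t_1}$ for the residue extension (Galois over $K(t_1)$ with group $D_{t_1}/I_{t_1}$), Abhyankar's lemma identifies this local field with $L((u))$, where $u^e$ is a unit multiple of $t-t_1$, and $L((t-t_1))$ is the maximal unramified subextension corresponding to $I_{t_1}$.

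Next I would descend this formal picture to the $\fp'$-adic specialization $t\mapsto t_0$. The hypothesis $\fp\notin \mathcal{S}_R(P)$ forces the radical of $\Delta_x(t)$ to reduce separably modulo $\fp$; since the minimal polynomial $m_{t_1}$ of $t_1$ divides this radical, Hensel's lemma applied to the simple factor having $\bar t_0$ as a root yields $K(t_1)_{\fp'}=K_\fp$. The same good-reduction control transports the tame description above into $K_\fp$-terms: specializing the formal uniformizer $t-t_1$ to the element $t_0-t_1\in K_\fp$ of valuation $m:=v_\fp(t_0-t_1)$, and using $\gcd(m,e)=1$, the equation $u^e=t_0-t_1$ defines a totally tamely ramified extension of degree exactly $e$ over any unramified base. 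Consequently the completion $E_{t_0}\cdot K_\fp/K_\fp$ is the compositum of the unramified extension $L_{\fp'}/K_\fp$ (with Galois group $D_{t_1,\fp'}\leq D_{t_1}/I_{t_1}$) and a totally tamely ramified degree-$e$ extension with inertia $I_{t_1}$. This yields the assertion (2), and identifies the Galois group of the completion, viewed inside $D_{t_1}\leq G$, with $\varphi^{-1}(D_{t_1,\fp'})$, thereby proving (1) up to conjugation in $G$.

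The main obstacle is the bookkeeping needed to justify that the tame Kummer description in the formal variable $(t-t_1)$ transports faithfully to the $\fp'$-adic specialization at $t\mapsto t_0$. Concretely, one must show that the integral closure of the local ring at the maximal ideal $(\fp, t-t_1)$ in $E\cdot K(t_1)$ has a reduction mod $\fp$ whose Galois-theoretic structure mirrors the generic one. This is precisely what the exclusion of $\mathcal{S}_R(P)$ — control over how the discriminant (and hence the branch locus) reduces — is designed to ensure, and the more general \cite[Theorem 4.2]{KLN} formalizes and carries out this step in detail.
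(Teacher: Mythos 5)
There is a genuine gap, and it sits exactly where you placed your ``main obstacle.'' The statement you are asked to prove is not the specialization result of \cite{KLN} itself but its validity outside the \emph{specific} set $\mathcal{S}_0=\{\fp : \fp\mid |G|\}\cup\mathcal{S}_R(P)$. The paper's proof consists almost entirely of showing that the exceptional set $\mathcal{S}_{\rm exc}$ of \cite[Theorem 4.1]{KLN} (branch points meeting at $\fp$, vertical ramification, non-$\fp$-integral branch points, ramification in $K(t_1)/K$, and the condition on Puiseux expansions defining $\mathcal{S}_2(t_1)$) is, away from $\fp\mid|G|$, contained in $\mathcal{S}_R(P)$. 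Your proposal asserts this containment in one sentence (``this is precisely what the exclusion of $\mathcal{S}_R(P)$ \dots is designed to ensure'') and defers it to \cite[Theorem 4.2]{KLN}; but that theorem is stated with its own exceptional set, not with $\mathcal{S}_R(P)$, so the citation does not discharge the obligation --- it merely restates it. In particular, the hardest inclusion, that a prime $\fp\nmid|G|$ lying in $\mathcal{S}_2(t_1)$ (i.e.\ such that some intermediate field of $E\cdot E(t_1)_{t_1}/K(t)$ admits no primitive element whose Puiseux expansion in $t-t_1$ has all coefficients of $\fp$-adic absolute value $\le 1$) must already lie in $\mathcal{S}_R(P)$, is nowhere addressed. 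The paper handles it via $\fp$-adic convergence results of Dwork--Robba (in Zannier's form) together with the Eisenstein-type integrality bound of Bilu--Borichev, using monicity of $P$ and the fact that $t_1$ meets no other root of $\Delta_x$ at $\fp$; nothing in your argument substitutes for this.

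The same issue undermines your direct ``Abhyankar plus Hensel'' descent. Writing the completion at $t\mapsto t_1$ as $L((u))$ with $u^e$ a unit multiple of $t-t_1$ is fine generically, but to specialize this description at $t\mapsto t_0$ modulo $\fp$ you need the multiplier $a$ (a unit of $L((t-t_1))$, not a priori a $\fp'$-unit) and the relevant power-series coefficients to be $\fp$-integral, and you need $e\mid v_{\fp'}(a)$ type control; this is exactly the content excluded by $\mathcal{S}_2$ and by the analysis in Remark \ref{rem:KLN2}, and it does not follow from separable reduction of the radical of $\Delta_x$ by Hensel's lemma alone. Likewise, vertical ramification of $\fp$ in $E$ (the set $\mathcal{S}_{{\rm bad},2}$) would destroy the claim that the inertia of $E_{t_0}\cdot K_\fp/K_\fp$ is $I_{t_1}$, and ruling it out requires the observation (via \cite{DG12}) that such primes divide $\Delta_x$ identically, hence lie in $\mathcal{S}_R(P)$. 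So the correct plan is: quote \cite[Theorem 4.1]{KLN} as a black box, and then prove, prime class by prime class, that every $\fp\in\mathcal{S}_{\rm exc}$ with $\fp\nmid|G|$ forces either the $t$-degree or the number of distinct roots of $\Delta_x$ to drop mod $\fp$ --- the Puiseux/convergence step being the part that genuinely needs new input.
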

 \begin{rem}
 \label{rem:exc_primes}
The same conclusion is shown in \cite[Theorem 4.1]{KLN} under the assumption that $\fp$ is not in a set $\mathcal{S}_{\rm{exc}}$ of exceptional places which is described as the union of the set $\mathcal{S}_{\rm{bad}}$ of bad primes from the Inertia specialization theorem \cite{Beck, Leg19}, and four other sets $\mathcal S_1, \mathcal S_2, \mathcal S_3, \mathcal S_4$. The set $\mathcal{S}_{\rm{bad}}$ consists of 
1) the set $\mathcal{S}_{\rm{bad},1}$ of primes $\fp$ of $K$ where two branch points $t_1,t_2\in\mathbb P^1(\oline K)$ meet; 
2) the set $\mathcal{S}_{\rm{bad},2}$ of primes $\fp$ with ``vertical ramification", i.e., such that $\fp R[t]$ is ramified (in the sense of \cite[Section 2]{Beck}) in the integral closure of $R[t]$ in $E$;
 3) the set $\mathcal{S}_{\rm{bad},3}$ of primes $\fp$ such that at least one branch point of $E/K(t)$ is not $\fp$-integral; and 4) the set $\mathcal{S}_{\rm{bad},4}$ of primes that divide the order of $G$. 
  
The sets $\mathcal{S}_i$, $i=1,\dots, 4$ are unions of sets $\mathcal{S}_i(t_1)$ where $t_1$ runs through the finite branch points of $E/F(t)$.
 The set $\mathcal{S}_1(t_1)$ consists
  of places  where the minimal polynomial of $t_1$ is either not $\fp$-integral or nonseparable. 
 The set $\mathcal S_3(t_1)$ is the set of places of $K$ ramified in $K(t_1)$. 
The set $\mathcal S_4(t_1)$ is then the set of primes $\fp$ at which $t_1$ meets another root $d$ of $\Delta_x$.\footnote{Note that since $t_1$ is a branch point, 
 $(m_{t_1}(x))$ divides the (relative) discriminant of $E/F(t)$, and hence the discriminant of $P$. Thus $t_1$ is a root of  $\Delta_x$.}
Finally, the set $\mathcal{S}_2(t_1)$ is the set of primes $\fp$ for which the following holds (cf.\ \cite[Lemma 4.3]{KLN}): there exists some intermediate field $M$ of $E \cdot E(t_1)_{t_1} / K(t)$ such that every primitive element $y$ of $M/K(t)$ has a Puiseux expansion in $s:=t-t_1$ in which some coefficient is of $\fp$-adic absolute value $|y|_\fp >1$ (equivalently, of negative $\fp$-adic valuation).
\end{rem}

\begin{proof}[Proof of Theorem \ref{thm:kln}]
In view of Remark \ref{rem:exc_primes}, 
this follows from \cite[Theorem 4.1]{KLN}, once we show that  for every $\fp\in \mathcal S_{exc}$ (as described above)  coprime to  $|G|$, either the number of distinct roots of  $\Delta_x$ or the $t$-degree of $\Delta_x$ decreases upon reduction mod $\fp$.
The former happens as soon as two roots of $\Delta_x$ meet at $\fp$ (in particular, as soon as two finite branch points meet at $\fp$), whereas the latter happens as soon as some root $t_1$ of $\Delta_x$ meets infinity at $\fp$ (since then the minimal polynomial of $t_1$ over $K$ cannot have $\fp$-integral coefficients, whence any multiple of it with coefficients in $R$, in particular $\Delta_x$, must have a leading coefficient divisible by $\fp$). This readily shows the claim for all $\fp \in \mathcal{S}_{\rm{bad},1} \cup \mathcal{S}_{\rm{bad},3} \cup \mathcal{S}_4$, see Remark \ref{rem:exc_primes}.
 
The set $\mathcal{S}_{\rm{bad},2}$ is contained in the set of those primes modulo which $\Delta_x(t)$ is congruent to $0$ (see, e.g., Addendum 1.4.c) of \cite{DG12}). We have {also} assumed $\fp$ is coprime to $|G|$, so that $\fp \notin \mathcal S_{\rm{bad},4}$.  

Next, fix a finite branch point $t_1$ and let $m_{t_1}$ be its minimal polynomial over $K$. Note that, since $m_{t_1}$ divides $\Delta_x$, the number of distinct roots (resp., the degree) of $\Delta_x$ decreases upon reduction mod $\fp$ as soon as $m_{t_1}$ is inseparable modulo $\fp$ (resp., has non-$\fp$-integral coefficients).
If $\fp\in \mathcal S_3(t_1)$, the prime $\fp$ ramifies in $K(t_1)/K$, 
and thus divides the discriminant of $m_{t_1}$, that is, $m_{t_1}$ is inseparable modulo $\fp$. If $\fp\in \mS_1(t_1)$, then $m_{t_1}$ is non-$\fp$-integral or inseparable mod $\fp$ by definition.

It thus remains to consider the set $\mathcal{S}_2(t_1)$. If the roots of the given polynomial $P(t,x)$ have  Puiseux expansions in $s:=t-t_1$ all of whose coefficients are of $\fp$-adic absolute value $\le 1$, then it is elementary to construct primitive elements for each intermediate field of $E\cdot E(t_1)_{t_1}/K(t)$ with the same property, e.g.\ by taking a suitable  polynomial combination of the roots of $P$ with $\fp$-integral coefficients. It therefore suffices to show that, if $\fp$ is coprime to $G$ and the number of distinct roots of $\Delta_x$  does not decrease mod $\fp$, then any root $z$ of $P(t,x)$ has a Puiseux series expansion $z=z(s)=\sum_{n\ge 0} a_n s^{n/e}$ in $s$ with coefficients $a_n$ of $\fp$-adic absolute value $\le 1$.\footnote{Note that the expansion of $z$ contains no negative $n$ due to integrality of $z$ by the choice of $P$. Also note that $|a_n|_\fp\leq 1$ means $|a_n|_\fP\leq 1$ for any prolongation {$\fP$} of $\fp$ to $K(a_n)$.} This holds essentially due to results by Dwork and Robba on convergence of $\fp$-adic power series  \cite{DwR}: By assumption {$t_1$} does not meet any other root $d$ of $\Delta_x(t)$ at $\fp$. Since additionally $\fp$ is chosen coprime to $|G|$, \cite[Proposition 4.1]{Zannier} (which extends slightly \cite[Theorem 2.1]{DwR}) implies that the power series $\sum_{n\ge 0} a_n y^n$ converges in the open disk of $\fp$-adic absolute value $1$. By \cite[Corollary 4.3]{Bilu} (and monicity of $P$), the claim $|a_n|_{\fp} \le 1$ follows for all $n$.
\end{proof}

Finally, when $P$ depends on a parameter $s$, 
we uniformly bound the mod $\fp$ residue classes of $s\mapsto s_0\in R_\fp$  for which $\fp \in \mathcal S_R(P(s_0,t,x))$. 
\begin{lem} \label{lem:goodprimes}
Let 
 $P\in R[s,t][x]$ a monic separable polynomial in $x$. 
Then there {exist a finite set $S=S_P$ of primes of $R$ and a constant $d=d_P$}, depending only on $P$, such that {for all primes $\fp$ of $R$ not contained in $S$,} the number of mod $\fp$ residue classes of values $s_0\in R_{\fp}$ for which $\fp\in \mathcal S_R(P(s_0,t,x))$ is at most $d$. 
\end{lem}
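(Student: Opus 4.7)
The strategy is to encode all conditions needed for $\fp\notin \mathcal S_R(P(s_0,t,x))$ into a single nonzero polynomial $\Phi(s)\in R[s]$ depending only on $P$: whenever $\fp$ avoids a finite set $S_P$ and $\overline{s_0}\in R/\fp$ satisfies $\overline{\Phi}(\overline{s_0})\ne 0$, the prime $\fp$ will not lie in $\mathcal S_R(P(s_0,t,x))$ for any lift $s_0\in R_\fp$. Setting $d_P:=\deg_s\Phi$, the number of exceptional residue classes is then bounded by the number of roots of $\overline{\Phi}$ in the field $R/\fp$, which for $\fp\notin S_P$ is at most $d_P$.

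To construct $\Phi$, set $\Delta(s,t):=\Delta_x P\in R[s,t]$, which is nonzero by separability of $P$ in $x$. If $\deg_t\Delta=0$, take $\Phi:=\Delta\in R[s]$ and note the trivial case. Otherwise let $d_1:=\deg_t\Delta>0$ and factor $\Delta = c\cdot\prod_i\Delta_i^{e_i}$ into distinct irreducibles in the UFD $K[s,t]$, where $K:=\Frac(R)$; after clearing denominators one may take each $\Delta_i\in R[s,t]$ and $c\in K^\times$. Partition the $\Delta_i$ into those with $\deg_t\Delta_i=0$ (whose product call $A(s)\in R[s]$) and those with $\deg_t\Delta_i>0$. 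Define $\Phi(s)\in R[s]$ as the product of: the $t$-leading coefficient of $\Delta$; the polynomial $A(s)$; the $t$-leading coefficients and the $t$-discriminants $\operatorname{Disc}_t\Delta_i\in R[s]$ (one per $\Delta_i$ of positive $t$-degree), the latter nonzero by irreducibility of $\Delta_i$ over $K(s)$; and the pairwise resultants $\operatorname{Res}_t(\Delta_i,\Delta_j)\in R[s]$ of the positive-$t$-degree factors, nonzero by pairwise coprimality in $K[s,t]$. Let $S_P$ consist of the finitely many primes $\fp$ with $v_\fp(c)\ne 0$ together with those dividing the leading $s$-coefficient of $\Phi$.

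For $\fp\notin S_P$ and $\overline{s_0}\in R/\fp$ with $\overline{\Phi}(\overline{s_0})\ne 0$, choose any lift $s_0\in R_\fp$. The non-vanishing of each factor of $\overline{\Phi}$ guarantees, simultaneously in $K_\fp[t]$ and in $(R/\fp)[t]$, that $\Delta(s_0,t)$ has $t$-degree $d_1$, that each $\Delta_i(s_0,t)$ with $\deg_t\Delta_i>0$ is separable of the generic $t$-degree, and that distinct $\Delta_i(s_0,t),\Delta_j(s_0,t)$ remain coprime. Since $c\in R_\fp^\times$ by the choice of $S_P$, the factorization $\Delta(s_0,t)=c\prod_i\Delta_i(s_0,t)^{e_i}$ exhibits $\sum_{\deg_t\Delta_i>0}\deg_t\Delta_i$ distinct $t$-roots both in $\overline{K_\fp}$ and in an algebraic closure of $R/\fp$, so $\fp\notin \mathcal S_R(P(s_0,t,x))$. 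The chief subtlety is coordinating the generic factorization over $K[s,t]$ with both specialization $s\mapsto s_0$ and mod-$\fp$ reduction; collecting the necessary non-degeneracy conditions into the single polynomial $\Phi$ is precisely what makes these operations commute.
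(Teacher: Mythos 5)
Your proposal is correct and takes essentially the same approach as the paper: the paper excludes finitely many degenerate primes and then bounds the bad residue classes of $s_0$ by the $s$-degree of the $t$-leading coefficient $a(s)$ of $\Delta_x$ plus the degrees of numerator and denominator of the $t$-discriminant of the radical of $\Delta_x$, which is exactly the role played by your polynomial $\Phi$. Your packaging via the irreducible factorization (per-factor discriminants and pairwise resultants, together with leading coefficients) encodes the same non-degeneracy data as the radical's discriminant, so the two arguments coincide in substance.
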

\begin{proof}
Let $\Delta_x\in R[t,s]$ be the discriminant of $P$ with respect to $x$. 
Let $a(s)\in R[s]$ denote the leading coefficient of $\Delta_x$ as a polynomial in $t$. 
{We may assume that $S$ contains all prime divisors of $a(s)$, and thus restrict to} $\fp\ndivides a(s)$. 
It follows that $a(s)$ has at most $a_P:=\deg_s a(s)$ roots in $R/\fp$. 
Restricting to  $s_0\in R_\fp$ with residue different from these $a_P$ residues, 
 the discriminant of $P(s_0,t,x)$ with respect to $x$ is $\Delta_x(s_0,t)\in R_\fp[t]$. By choice of $s_0$, its leading coefficient $a(s_0)$ is nonzero mod $\fp$, and hence the $t$-degree of $\Delta_x(s_0,t)\in R_\fp[t]$  remains  the same mod $\fp$. 

Consider the radical $\Delta_x'\in R[s][t]$ of $\Delta_x(s,t)$ as a polynomial in $t$. 
The discriminant $\Delta\in K(s)$ of $\Delta_x'$ with respect to $t$ is then a nonzero rational function in $s$. 
{We may assume that $S$ contains all primes $\fp$ where the reduction of $\Delta$ mod $\fp$ is either not defined or is zero}, and henceforth assume $\Delta$ mod $\fp$ is a nonzero rational function in $(R/\fp)(s)$. Letting $b_P$ be the sum of the degrees of  the numerator and denominator of $\Delta$, we have at most $b_P$ residue classes of roots and poles of $\Delta$ mod $\fp$. Away from these $b_P$ values $\Delta(s_0)\in R/\fp$ is nonzero, and hence the number of distinct roots of $\Delta_x'$ (and hence of $\Delta_x$) remains the same mod $\fp$. 
Thus the assertion holds {with the constant $d_P:=a_P+b_P$}. 
\end{proof}
\begin{rem} a)  {In the case where $R$ is the ring of integers of a number field (or more generally, a Dedekind domain with finite residue fields), the set $S$ may of course be dropped from the assertion of Lemma \ref{lem:goodprimes} , via enlarging the constant $d_P$ sufficiently, namely to at least the maximal norm of any prime in $S$.}
  \\
 b)   The 
{ring $R[s]$ (of regular functions on the affine line) in Lemma \ref{lem:goodprimes}} can be replaced by {the coordinate ring $R[V]$ of} an irreducible affine variety $V$ {over $R$} 
at the account of excluding finitely many primes as follows. Given a polynomial $P\in R[V][t,x]$,
 denote be the specialization of $P$ at $v\in R[V]$ by  $P^{(v)}\in R[t,x]$. The above proof then shows that: \\
{\it There exist a proper (closed) subvariety $V_P$ and a finite set of primes $\mathcal S_1$ of $R$ such that $\fp\not\in \mathcal S_{R_\fp}(P^{(v)})$ for every $v\in V(R_\fp)\setminus V_P(K_\fp)$. 
}
\end{rem}

Finally, we note that although the residue extension is fixed as $t_0$ varies through values as in Theorem \ref{thm:kln},  the ramified part is quite arbitrary; cf.\ \cite[Theorem 4.4]{KLN}.
\begin{thm}\label{thm:KLN2}
Let $E/K(t)$ be a $K$-regular Galois extension,
 and $t_1\in \oline K$ a branch point. 
Let $D, I$, and $N/K(t_1)$ be the Galois group, inertia group, and residue extension, respectively, of the completion at  $t\mapsto t_1$. Let $\fp\lhd R$ be a prime away from the finite set $\mathcal S_0$ of Theorem \ref{thm:kln}, 
with a degree $1$ prime $\fp'$ of $K(t_1)$ lying over it, and define $D'$ as $\varphi^{-1}(D_{t_1,\fp'})$, with the notation of Theorem  \ref{thm:kln}. 
Let $T_\fp/K_\fp$ be a $D'$-extension with inertia group $I$,   
such that $T_\fp^I \cong NK(t_1)_{\fp'}$ ($\cong NK_\fp$). 
Then there exist infinitely many specializations $E_{t_0}/K$ at $t_0\in K$ 
whose completion at $\fp$  is $T_\fp/K_\fp$. 
\end{thm}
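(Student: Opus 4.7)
The plan is to apply Theorem~\ref{thm:kln} so that, for every $t_0 \in K$ meeting $t_1$ at $\fp'$ with $v_{\fp'}(t_0-t_1)$ positive and coprime to $e:=|I|$, the decomposition group, inertia group, and unramified part of $E_{t_0}\cdot K_\fp$ automatically match those of $T_\fp$; the remaining task will be to vary $t_0$ so as to also match the Kummer datum of $T_\fp$ as a tame totally ramified extension of $NK_\fp$.

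Concretely, after enlarging the finite exceptional set so that $\fp$ is unramified in $K(t_1)/K$ and coprime to $e$, I would use that $K$ is dense in $K_\fp = K(t_1)_{\fp'}$ to pick $t_0 \in K$ with $v_{\fp'}(t_0-t_1) = 1$. By Theorem~\ref{thm:kln}, $E_{t_0}\cdot K_\fp$ is then a $D'$-extension of $K_\fp$ with inertia group conjugate to $I$ and unramified part $NK(t_1)_{\fp'} \cong NK_\fp$, exactly as for $T_\fp$. The task reduces to showing that, by varying $t_0$, the ramified Kummer datum of $E_{t_0}\cdot K_\fp$ over $NK_\fp$ can be made to equal that of $T_\fp$.

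For this I would pass to a Kummer description. By Lemma~\ref{lem:id_pairs}, $I \cong \mu_e$ as a $D'$-module and $\mu_e \subseteq NK_\fp$, so $T_\fp = NK_\fp\bigl(\sqrt[e]{a}\bigr)$ for some fixed $a \in (NK_\fp)^\times$ of valuation coprime to $e$ whose class in $(NK_\fp)^\times/((NK_\fp)^\times)^e$ is compatible with the $D_{t_1,\fp'}$-action via $\eta_\pi$. On the geometric side, writing $s := t - t_1$ and expanding a primitive element of $E/K(t_1)(t)$ as a Puiseux series in $s$, the tame totally ramified part of the completion at $t\mapsto t_1$ takes the Kummer form $NK_\fp((s))\bigl(\sqrt[e]{s\, h(s)}\bigr)$ for some unit $h(s) \in NK_\fp[[s]]$. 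Specialising $s\mapsto s_0 := t_0 - t_1$ then yields
\[ E_{t_0}\cdot K_\fp \;=\; NK_\fp\bigl(\sqrt[e]{s_0\, h(s_0)}\bigr). \]

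It is therefore enough to find $t_0 \in K$ with $s_0\, h(s_0)/a \in ((NK_\fp)^\times)^e$. Since $h$ is a unit power series, weak approximation for $K$ inside $K_\fp$ allows choosing $t_0$ with $s_0$ arbitrarily $\fp$-adically close to $a/h(0)$; continuity of $h$ and Hensel's lemma then force $s_0\, h(s_0)/a$ to be an $e$-th power, and Krasner's lemma yields $E_{t_0}\cdot K_\fp = T_\fp$. Infinitely many such $t_0$ arise by tightening the approximation. The main obstacle I anticipate is the Galois-descent step: verifying that the Puiseux representative $s\, h(s)$ really carries the $D_{t_1,\fp'}$-action on $I \cong \mu_e$ prescribed by $\eta_\pi$, so that as $t_0$ varies the specialisations exhaust the torsor of permissible $T_\fp$. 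This should follow because both Kummer descriptions compute the same $D'$-extension of $K_\fp$ with the residue data supplied by Theorem~\ref{thm:kln}.
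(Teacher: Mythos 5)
The paper does not prove Theorem~\ref{thm:KLN2} from scratch: it is cited from \cite[Theorem 4.4]{KLN}, and the only work done in the present paper is the argument of Remark~\ref{rem:KLN2}, which shows that the exceptional set $\mathcal S'_{\rm exc}$ appearing in \cite{KLN} is actually contained in the set $\mathcal S_0$ of Theorem~\ref{thm:kln}, so that the hypothesis $\fp\notin\mathcal S_0$ is sufficient. Your attempt instead tries to reprove the theorem directly via Kummer theory, which is the right circle of ideas (it is essentially what \cite{KLN} does internally), but your write-up contains a genuine gap at its crux.

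The gap is precisely at the step where you pass from the geometric Kummer representative $s_0\,h(s_0)$ to the target class $a$. You want to pick $t_0\in K$ with $s_0=t_0-t_1$ ``arbitrarily $\fp$-adically close to $a/h(0)$'', but $a$ is a priori only an element of $(NK_\fp)^\times$, not of $K_\fp^\times$, so $a/h(0)$ is generally not an element of $K_\fp$ and this approximation is meaningless as stated. What you actually need, and do not establish, is that the class of $a/h(0)$ in $(NK_\fp)^\times/((NK_\fp)^\times)^e$ lies in the image of $K_\fp^\times$. This is not automatic: in the picture $T_\fp=NK_\fp(\sqrt[e]{a})$, the classes $a$ yielding a $D'$-Galois extension satisfy a cocycle condition $\sigma(a)\equiv a^{\chi(\sigma)}\pmod{((NK_\fp)^\times)^e}$ coming from $\eta_\pi$, and when $\chi$ is non-trivial a uniformizer of $K_\fp$ does \emph{not} satisfy this condition; so the translates $s_0 h(0)$, $s_0\in K_\fp^\times$, sweep out a certain coset which must be shown to coincide with the set of equivariant classes. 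You flag this yourself as ``the Galois-descent step'' being ``the main obstacle,'' but identifying the obstacle is not the same as resolving it; the whole content of the theorem is that the specializations hit exactly the set of admissible $T_\fp$, and your final sentence (``this should follow because both Kummer descriptions compute the same $D'$-extension'') assumes rather than proves the point. To close the gap one would need either the matching count of admissible local extensions versus specialization classes, or a descent argument showing the geometric Kummer datum $a\tau$ with $a\in N^\times$ (as in Remark~\ref{rem:KLN2}) already realizes a base point of the correct coset and that $K_\fp^\times$ acts transitively on it — the route \cite{KLN} follows.

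A second, minor point: nothing in your argument exploits the hypothesis $\fp\notin\mathcal S_0$ beyond what is needed for Theorem~\ref{thm:kln} to apply; but the extra exceptional primes in \cite{KLN} (those where $e\nmid v_{\fp'}(a)$ for the geometric $a$) could in principle ruin the valuation bookkeeping in your Kummer step, and it is the content of Remark~\ref{rem:KLN2} that these primes already lie in $\mathcal S_R(P)\subseteq\mathcal S_0$. If you prove the theorem directly you must either reprove that containment or explicitly exclude such primes.
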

\begin{rem}\label{rem:KLN2}
Let $P\in R[t,x]$ be a polynomial with splitting field $E$ {over $K(t)$, and} whose roots are integral over {$R[\tau]$, where $\tau:=t-t_1$}. One may write the completion of $E$ at $t\mapsto t_1$ as {$N((\sqrt[e]{a\tau}))$} for $e=|I|$ and $a\in N^\times$. 
 The exceptional set of primes in \cite[Theorem 4.4]{KLN} is instead given as the set $\mathcal{S}_{exc}'$ consisting of the exceptional primes of Theorem \ref{thm:kln} together with the finitely many primes $\fp$ for which  $e\ndivides v_{\fp'}(a)$ for some   degree $1$ prime $\fp'\divides\fp$  of the integral closure $R'$ of $R$ in $N$. 
However we claim that the latter primes are contained in $\mathcal S_R(P)$. 

First, by possibly replacing $a$, we may further assume that $a\in R'$ and hence $v_{\fp'}(a)>0$. As $e=|I|$ can be assumed to be $>1$, there is a root  $\alpha$ of $P$ that is not fixed by $I$, so that $\beta=\alpha^i$ is a nontrivial conjugate of $\alpha$ by some $i\in I$. Since $\alpha,\beta$ are integral over {$R[\tau]$}, they are integral over {$R'[[\tau]]$}. 
{Setting $u=\sqrt[e]{a\tau}$}, we note that since  $v_{\fp'}(a)>0$, the prime $\fp'$ divides  {$u^e=a\tau$}. {Writing $\alpha$ and $\beta$ as power series in $u$, the free coefficients of $\alpha$ and $\beta$ are the same and hence the difference $\alpha-\beta$ is a multiple of $u$. Thus $(\alpha-\beta)^e$ is a multiple of $u^e$ and hence divisible by $\fp'$ as an element in the integral closure of {$R'[[\tau]]$} in $N((u))$}. Letting $\Delta_x$ denote the discriminant of $P$, we deduce that $(\Delta_x)^e$ is zero mod $\fp'$, and hence $\Delta_x\equiv 0$ mod $\fp$, proving the claim.

\end{rem}


\subsection{Twists and specializations}\label{sec:twists}

Let $K$ be a field of characteristic $0$, let $f:X\to Y$ be a Galois cover with group $G$ of smooth projective irreducible varieties over $K$, with $Y$ absolutely irreducible. Let $\varphi: \pi_1(Y')_K\to G$ be an epimorphism representing $f$, as in Section \ref{sec:notation}. 
 We are interested in whether a given morphism $\psi: G_K\to G$ occurs as a specialization morphism of the given $\varphi$. This is equivalent to a question about $K$-rational points of a certain cover $f^\psi$, called the {\it twist} of $f$ by $\psi$. 
 
We follow the definition and properties of $f^\psi:\tilde{X}\to Y$ given in \cite[Section 3]{DL}\footnote{The twisting lemma can also be viewed as the well known field crossing argument.}
 Consider the representation 
 $\varphi^\psi: \pi_1(Y')_K\to Sym(G)$ 
  given by:
 $$(\varphi^\psi(\theta))(x) := \varphi(\theta) x (\psi(\overline{\theta}))^{-1},\  \theta \in \pi_1(Y')_K, x\in G,$$
where {$Sym(G)$ is the symmetric group on the set $G$ and} $\overline{\theta}$ is the image of $\theta$ under the natural projection $\pi_1(Y')_K \to G_K$.
Note that this action is not necessarily transitive. The equivalence of such representation{s} with \'etale covers yields $f^\psi$ as follows. Consider the orbits $O_1,\ldots,O_r\subseteq G$ in the above action, and let $H_i\leq \pi_1(Y')_K$ be the stabilizer of a point in $O_i$, $i=1,\ldots,r$. The quotient by $H_i$ defines an \'etale covering $X_i\ra Y'$, and $f^\psi$ is the resulting covering from $\tilde X=\prod_{i=1}^r X_i$ to $Y'$. 
 
The restriction of $\varphi^\psi$ to $\oline K$ is equivalent to that of $\varphi$ (composed with the regular representation), and hence the cover $f^\psi:\tilde{X}\to Y$ becomes isomorphic to $f$ over $\oline K$. 
 Moreover, by the so-called ``twisting lemma" \cite[Lemma 2.1]{DG12}, \cite[Lemma 3.2]{DL}, $\tilde{X}$ has a rational point at ${\bf t}_0\in Y(K)$ (away from the branch locus) if and only if the specialization morphism of $\varphi$ at ${\bf t}_0$ is {\it equivalent to $\psi$ by conjugation}, that is, equals the composition of  $\psi$ with an inner automorphism of $G$.
 
Note that a trivially necessary condition for $\tilde{X}$ to have a $K$-rational point is for $f^\psi$ to be $K$-regular, i.e., have trivial morphism of constants. This requires 
the morphism of constants $\varphi_K$ of $\varphi$ to fulfill $\varphi_K = \theta\circ \psi$, where (up to conjugation) $\theta$ is the canonical epimorphism of $G$ onto the image of $\varphi_K$. In this case, say for short that $\psi$ {\it projects} onto $\varphi_K$.

For a number field $K$ and a prime $\fp$ of $K$,  denote by $\varphi_\fp$ and $G_\fp$ the restriction of $\varphi$ to $\pi_1(Y')_{K_{\fp}}$ and its  image, respectively. 
We can now state the following theorem on specializations of function field extensions $E/K(t)$ with a prescribed local behavior, which extends \cite[Theorem 1.2]{DG12} to cases where $E$ is not necessarily $K$-regular. {Compare also to \cite[Corollary 4.4]{DL}, which has no regularity assumption, but considers only primes which split completely in the field of constants of the cover.}

\begin{thm}
\label{thm:debesghazi_ext}
Let $f:X\to \mathbb{P}^1$ be a Galois cover with group $G$, defined over {a number field} $K$, represented by
$\varphi: \pi_1((\mathbb{P}^1)')_K\to G$ and with function field extension $E/K(t)$.
Then there exists a bound $L(\varphi)$ satisfying the following property: for every prime $\fp$ of $K$ with norm $N(\fp)>L(\varphi)$ and every unramified morphism $\psi: G_{K_\fp} \to G$, with $\psi$ projecting onto the morphism of constants of $\varphi_\fp$, 
there exist infinitely many $t_0\in K$ such that the specialization of $\varphi_\fp$ 
at $t_0$ is equivalent to $\psi$ by conjugation.

In particular, for every unramified extension $F/{K_\fp}$ which contains the full constant field of $EK_\fp/K_\fp(t)$ and whose Galois group embeds into $G$, 
there exist infinitely many $t_0\in K$ such that the completion of $E_{t_0}/K$ at $\fp$ equals $F/K_\fp$.
\end{thm}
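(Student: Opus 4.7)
The plan is to extend the D\`ebes--Ghazi approach \cite{DG12} to the non-$K$-regular setting, employing the twisting technique of Section \ref{sec:twists} combined with a Lang--Weil/Hensel argument. First I would form the twist $f^\psi : \tilde X \to \mathbb{P}^1$ of $f$ over $K_\fp$. By the twisting lemma, any $t_0 \in \mathbb{P}^1(K)$ away from the branch locus yields a specialization morphism of $\varphi_\fp$ equivalent to $\psi$ by conjugation if and only if $\tilde X(K_\fp)$ contains a point above $t_0$; the task thereby reduces to producing infinitely many such $t_0 \in K$.

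The next step is to isolate the relevant $K_\fp$-component of $\tilde X$. Restricting the twisted representation $\varphi^\psi$ to the geometric fundamental group gives the action $x \mapsto \varphi(\theta) x$, whose orbits on $G$ are the left cosets of the geometric image $G_\fp^{\mathrm{geo}} := \varphi(\pi_1((\mathbb{P}^1)')_{\overline{K_\fp}})$. Using the normality of $\pi_1((\mathbb{P}^1)')_{\overline{K_\fp}}$ in $\pi_1((\mathbb{P}^1)')_{K_\fp}$ and the canonical identification of $G/G_\fp^{\mathrm{geo}}$ with the image of the morphism of constants $\varphi_{\fp,K}$ of $\varphi_\fp$, one checks that the arithmetic action sends the coset $G_\fp^{\mathrm{geo}} x$ to $G_\fp^{\mathrm{geo}} \varphi(\theta) x \psi(\overline{\theta})^{-1}$, and that the coset $G_\fp^{\mathrm{geo}}$ itself is preserved precisely when $\psi$ projects onto $\varphi_{\fp,K}$ --- which is the standing hypothesis. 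The corresponding component $\tilde X_0 \subseteq \tilde X$ is then a $K_\fp$-regular, geometrically irreducible cover of $\mathbb{P}^1$ of degree $|G_\fp^{\mathrm{geo}}|$. When $\varphi_\fp$ is already $K_\fp$-regular, $G_\fp^{\mathrm{geo}} = G$ and $\tilde X_0 = \tilde X$, recovering the setting of \cite{DG12}.

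For primes $\fp$ with $N(\fp) > L(\varphi)$, where $L(\varphi)$ depends only on the branch locus and the reduction data of $f$, the cover $\tilde X_0 \to \mathbb{P}^1$ has good reduction at $\fp$. The Lang--Weil estimate applied to the reduction yields smooth $\mathbb{F}_{N(\fp)}$-points of $\tilde X_0$ away from the branch divisor, and Hensel's lemma lifts these to $K_\fp$-rational points of $\tilde X_0$. Hence the set of $t \in \mathbb{P}^1(K_\fp)$ admitting a $K_\fp$-rational preimage on $\tilde X_0$ is nonempty and $\fp$-adically open, and by density of $K$ in $K_\fp$ it contains infinitely many $t_0 \in K$. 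This yields the first assertion. For the second, take $\psi$ to be the composite $G_{K_\fp} \twoheadrightarrow \Gal(F/K_\fp) \hookrightarrow G$: the hypothesis that $F$ contains the constant field of $EK_\fp/K_\fp(t)$ is precisely the requirement that $\psi$ project onto $\varphi_{\fp,K}$, and since $F$ coincides with the fixed field of $\ker \psi$, the first part delivers the desired $t_0 \in K$ with $E_{t_0} \cdot K_\fp = F$.

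The principal obstacle is the component analysis in the second step: in the non-$K_\fp$-regular case $\tilde X$ genuinely splits, and one must verify that the projection hypothesis on $\psi$ is exactly what distinguishes a single geometrically irreducible, $K_\fp$-regular component $\tilde X_0$ amenable to the Lang--Weil/Hensel machinery. Once $\tilde X_0$ has been identified, the remaining arguments are a direct adaptation of \cite{DG12} applied to $\tilde X_0$ in place of $\tilde X$.
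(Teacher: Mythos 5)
Your proposal is correct and follows essentially the same route as the paper's proof: reduce via the twisting lemma to finding $K_\fp$-points on the twist, identify (via the coset analysis) the $K_\fp$-regular absolutely irreducible component singled out by the projection hypothesis — which is exactly what the paper delegates to \cite[Section 3.1.1]{DL} — then apply Lang--Weil and Hensel lifting and approximate from $K$, and for the field assertion choose $\psi$ factoring through $\Gal(F/K_\fp)$ with the containment $F\supseteq$ constant field of $EK_\fp$ supplying the projection condition.
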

\begin{proof}
Via the twisting lemma, it suffices to show that the twist $f^\psi$, viewed as a morphism over $K_\fp$, 
has infinitely many $K_\fp$-points. As shown in \cite[Section 3.1.1]{DL}, the assumption that $\psi$ projects onto the morphism of constants of $\varphi_{\fp}$ implies that the restriction of the twist $f^\psi:X_\fp\to \mathbb{P}^1_{K_\fp}$, to a connected component $X_\fp$ over  $K_\fp$, is $K_\fp$-regular, that is, $X_\fp$ is absolutely irreducible. Furthermore, as stated above $X_\fp$ is of the same genus as a connected component of $X$ over $\oline K_\fp$, since the two become isomorphic after suitable base change. The same argument as in the proof of \cite[Theorem 1.2]{DG12} applies from here on: The first assertion follows via considering first the mod-$\fp$ reduction $\overline{X_\fp}$ of $X_\fp$, application of the Lang-Weil bound for this reduction in order to ensure the existence of simple mod-$\fp$ points, and then Hensel lifting to ensure the existence of $K_\fp$-points. The assertion about fields follows by choosing an epimorphism $G_{K_\fp}\to \Gal(F/K_\fp)$ which projects onto the prescribed morphism of constants of $\varphi_{\fp}$, and whose kernel has the constant field of $EK_\fp$ as a fixed field. This is possible since the constant field of  $EK_\fp $ is contained  in $F$.
\end{proof}

\begin{rem}
\label{rem:debesghazi_ext}
An explicit application of the Lang-Weil bound in the above proof shows the following stronger statement: There are constants $a,b>0$ depending only on $E/K(t)$ such that  for at least $a\cdot N(\fp)- b$ residue classes $r$ mod $\fp$, the condition $t_0 \equiv r$ mod $\fp$ is sufficient to yield the assertions of Theorem   \ref{thm:debesghazi_ext}. 
Indeed, this follows since at most  $|G|$ points of $\overline{X_\fp}$ lie over the same point of $\mathbb{P}^1$, and the only specialization values $t_0$ that need to be excluded {are} the bounded number of branch points.
\end{rem}

\section{The construction and its specializations}
\label{sec:proof1}
The construction of the main theorem is given in Section \ref{sec:construct} for a single ID pair $\pi=(I,D)$ over a number field $K$. Its specialization properties are given in Section~\ref{sec:specialize}.

\subsection{Construction}\label{sec:construct}
The following two lemmas are used to construct the residue extensions of the desired extension: 
\begin{lem}\label{lem:residue} Let $F_0$ be a one variable function field
{with exact constant} field $K$, let $L/K$ be a cyclic extension, and $\varphi:G_{F_0}\ra V$ the restriction map to $V:=\Gal(LF_0/F_0)$.  
Let $U$ be a cyclic group with an epimorphism $p:U\ra V$.
Then 
$(\varphi,p)$ has an $L$-regular proper solution over $M$ for some $K$-regular extension $M/F_0$. 
\end{lem}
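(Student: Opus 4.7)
The plan is to give an explicit Kummer-theoretic construction of a cyclic $U$-extension $N/M$ of a rational function field $M := F_0(y)$, which is automatically $K$-regular over $F_0$; the general case is handled by descent from the Kummer case.

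First assume $\mu_n \subseteq K$ where $n := |U|$. Let $m := |V|$ and pick $b_0 \in K^\times$ with $L = K(\sqrt[m]{b_0})$ (possible by Kummer theory since $\mu_m \subseteq K$). Set $b := b_0 y^m \in M^\times$ and $N := M(\sqrt[n]{b})$; the associated Kummer character (with a fixed choice of $\zeta_n \in K$) is the proposed solution $\psi : G_M \to U$. Three things then need verification. First, $[N:M] = n$, which reduces to showing $b$ is not a $p$-th power in $M$ for any prime $p \mid n$: for $p \nmid m$, the $y$-valuation $v_y(b) = m$ is not divisible by $p$; for $p \mid m$, a hypothetical factorization $b_0 y^m = g^p$ (after analyzing $v_y(g)$) would force $b_0 \in (F_0^\times)^p = (K^\times)^p$, using that $F_0$ has constant field $K$, contradicting that $b_0$ has order $m$ in $K^\times/(K^\times)^m$. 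Second, $p \circ \psi = \varphi|_{G_M}$: with $u$ the generator of $U$ acting on $\sqrt[n]{b}$ by $\zeta_n$, the kernel $A = \langle u^m \rangle$ acts by $\zeta_n^m$, whose fixed field is $M(\sqrt[m]{b}) = M(\sqrt[m]{b_0}\cdot y) = M(\sqrt[m]{b_0}) = LM$. Third, the exact constant field of $N$ is $L$: setting $\alpha := \sqrt[m]{b_0} \in L$ and $v := \sqrt[n]{b}$, the identity $v^{n/m} = \zeta \alpha y$ for some $\zeta \in \mu_m$ exhibits $N = LF_0(v)$ as a rational function field over $LF_0$; by the linear disjointness of $L/K$ and $F_0/K$ (since $L \cap F_0 \subseteq \oline{K} \cap F_0 = K$), $LF_0$ has constant field exactly $L$, hence so does $N$.

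For the general case $\mu_n \not\subseteq K$, carry out the above construction over $K' := K(\mu_n)$ and $F_0' := K' F_0$ to produce a $U$-extension $N'/M'$ of $M' := F_0'(y)$ with the desired properties over $K'$. The main obstacle is the descent to $K$: a priori $\Gal(N'/F_0(y)) \cong U \rtimes \Gal(K'/K)$ need not be cyclic, so simply restricting scalars does not suffice. To enforce cyclicity over $K$, I would either twist the Kummer generator $b_0 y^m$ by a suitable Galois-equivariant factor so that the descended extension is cyclic of group $U$ over $M$, or else enlarge $M$ by the function field of a Severi--Brauer variety over $F_0$ representing the obstruction class in $H^2(G_{F_0}, A)$, killing it while preserving $K$-regularity (the variety being absolutely irreducible over $K$). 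In either approach, $L$-regularity of the resulting solution is automatic from the construction's dependence on the transcendental parameter $y$, which prevents the constant field from growing beyond $L$.
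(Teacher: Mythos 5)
Your Kummer-theoretic construction in the case $\mu_n\subseteq K$ is essentially sound (the degree, the compatibility $p\circ\psi=\varphi|_{G_M}$, and the constant field of $N$ all check out), but the proposal has a genuine gap exactly where the lemma has content: the case $\mu_n\not\subseteq K$. This is the case actually needed downstream (in Corollary \ref{thm:construct-2} one applies the lemma over $K_0=K(\mu_e)^{V_\pi}$ with $L=K_0(\mu_e)$, and $K_0$ will in general not contain $\mu_{|U|}$), and your treatment of it is only a pair of unexecuted strategies. The ``twist the Kummer generator by a suitable Galois-equivariant factor'' step is not specified, and the Severi--Brauer route is problematic: since $U$ is abelian, the kernel $W=\ker p$ is a \emph{trivial} $G_{F_0}$-module, so the obstruction to the embedding problem lives in $H^2(G_{F_0},\Z/|W|)$ with trivial action, which is not identified with Brauer-group torsion in the absence of roots of unity; moreover the obstruction is inflated from $G_K$ and is genuinely nontrivial in general (compare the classical criterion for embedding a quadratic extension into a $C_4$-extension), and even after killing it by some base change one must still produce a solution that is simultaneously proper, compatible with the prescribed quotient $\varphi$, and $L$-regular --- which is the actual point of the lemma and is not delivered by the remark that regularity is ``automatic from the dependence on $y$''.

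A second, structural issue is your choice $M:=F_0(y)$: this is a purely transcendental, not finite, extension of $F_0$. The lemma is invoked by taking a primitive element $s_\pi$ of the \emph{finite} extension $M/F_0$ and completing $F_0(s)$ at $s\mapsto s_\pi$; with your $M$ the resulting field would have transcendence degree $3$ over $K$, and the bound $\ld_K(G)\le 2$ of the Main Theorem would be lost. The paper's proof avoids both difficulties at once and uses no root-of-unity hypothesis: one takes a $K$-regular epimorphism $\varphi_2:G_{F_0}\to U$ (cyclic groups admit such geometric realizations over any $F_0$), sets $\varphi_1=p\circ\varphi_2$, and defines $M$ as the degree-$|V|$ extension of $F_0$ fixed by the preimage under $\varphi_1\times\varphi$ of the diagonal in $V\times V$; over this finite, $K$-regular $M$ the arithmetic character $\varphi$ and the geometric character $\varphi_1$ agree, and $\varphi_2|_{G_M}$ is already a proper, $L$-regular solution. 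Your explicit Kummer construction is closer in spirit to Lemma \ref{lem:p-power}, which is precisely the cyclotomic special case where one can work over $F_0$ itself.
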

\begin{proof}
Set $W:=\ker p$.
 Since $U$ is cyclic, {it is classical that, regardless of the base field $K$,} there exists a $K$-regular epimorphism $\varphi_2:G_{F_0}\ra U$ {(e.g., \cite[Proposition 16.3.5]{FJ} gives this for every abelian group over $K(t)$ instead of $F_0$; for the general case, in which $F_0$ is a finite extension of $K(t)$, simply apply the above for a sufficiently high direct power of the cyclic group $U$ to get at least one epimorphism $G_{K(t)}\ra U$ which remains surjective after restriction to $G_{F_0}$)}. 
Let $\varphi_1:G_{F_0}\ra V$ be the composition of $\varphi_2$ with $p$. 
Since $\varphi_1$ is $K$-regular, $\ker\varphi_1\cdot G_{\oline K F_0}=G_{F_0}$ and hence 
$\ker\varphi_1\cdot \ker\varphi = G_{F_0}$. 
Thus, the map $\varphi_1\times \varphi:G_{F_0} \ra V\times V$, $\sigma\mapsto (\varphi_1(\sigma), \varphi(\sigma))$, is onto. 
Choose $M$ to be the subfield fixed by $(\varphi_1\times \varphi)^{-1}(D_1)$, where $D_1\leq V\times V$ is the diagonal subgroup. 

Since the image of the map $\varphi_2\times \varphi:G_M\ra U\times V$ is the diagonal subgroup $D_2:=\{(u,p(u))\,|\,u\in U\}$, the projection $p_U:U\times V\ra U$ to the first coordinate  maps $D_2$ isomorphically to $U$, yielding an epimorphism $\psi:=p_U\circ (\varphi_2\times \varphi):G_M\ra U$. Letting $p_V:U\times V\ra V$ be the projection to $V$, since $p\circ p_U = p_V\circ (p\times id)$ on $D_2$, and $p\circ \varphi_2= \varphi_1$, we have $p\circ \psi = p_V\circ (p\times id)\circ (\varphi_2\times \varphi) =  \varphi.$
Thus $\psi$ is a (proper) solution. 
$$
\xymatrix{
& & G_M \ar[d]^{\varphi_1\times \varphi} \ar[dl]_{\varphi_2\times \varphi} \\
W\times 1\ar[r]^{id} \ar[d]_{p_U}^{\cong} & D_2 \ar[r]^{p\times id}\ar[d]_{p_U}^{\cong} & D_1 \ar[d]_{p_V}^{\cong} \\
W\ar[r]^{id} & U \ar[r]^{p} & V
}
$$

It remains to show that $\psi$ is $L$-regular. 
First note that $\psi^{-1}(W)=\ker\varphi_{|G_M}=G_{LM}$, and $LM/M$ is a constant extension. Thus to prove $L$-regularity, it suffices to show $\psi(G_{\oline KM})\supseteq W$.  We claim that $G_{\oline KM} \supseteq G_{\oline KF_0}\cap \varphi_2^{-1}(W)$. Since $\varphi_2(G_{\oline KF_0})=U$ as $\varphi_2$ is $K$-regular,  the claim shows that $\varphi_2(G_{\oline KM}) {\supseteq} W$, and hence $$\psi(G_{\oline KM})=p_U\circ (\varphi_2\times \varphi)(G_{\oline KM})\supseteq W,$$ as desired for $L$-regularity. 
To prove the claim, recall that $M$ is the fixed field of $(\varphi_2\times \varphi)^{-1}(D_2){=(\varphi_1\times\varphi)^{-1}(D_1)}$, which contains 
$(\varphi_2\times \varphi)^{-1}(W\times 1)=\varphi_2^{-1}(W)\cap \ker\varphi$. 
Thus,
$G_{\oline KM} =  G_{\oline KF_0}\cap (\varphi_2\times \varphi)^{-1}(D_2)
\supseteq G_{\oline K F_0}\cap \varphi_2^{-1}(W)\cap \ker\varphi.$
Since $\ker\varphi = G_{LF_0}\supseteq G_{\oline K F_0}$, 
this gives $G_{\oline K M}\supseteq G_{\oline KF_0}\cap \varphi_2^{-1}(W)$, proving the claim. 
\end{proof}
A stronger conclusion is available in case the extension $L/K$ is cyclotomic:
\begin{lem}\label{lem:p-power}
In the setup of Lemma \ref{lem:residue}, assume further that ${U}$ is {an $\ell$-group for a prime $\ell\neq \charak K$, where either  $\ell$ is on odd prime or $\mu_4\subseteq K$,  and that $L=K(\mu_e)$ for $e=\ell^s$}. Then $(\varphi,p)$ has an $L$-regular proper solution over $F_0$. 
\end{lem}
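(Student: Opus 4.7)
The plan is to construct $\psi$ directly as a product of two characters---a cyclotomic lift of $\varphi$ to $U$, and a $K$-regular character valued in $W := \ker p$ that corrects the constant field---thereby avoiding the enlargement $M/F_0$ that was necessary in \Lref{lem:residue}.

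The critical observation, which I expect to be the main obstacle, is that under the hypotheses the non-triviality of $V$ forces $\mu_\ell \subseteq K$. Indeed, $V$ identifies via restriction with $\Gal(L/L\cap F_0) \le \Gal(L/K)$; by the hypothesis (either $\ell$ odd, or $\ell = 2$ and $\mu_4 \subseteq K$) the latter group embeds into $(\Z/\ell^s\Z)^\times$ and is in particular cyclic, and its $\ell$-Sylow is contained in the kernel of the reduction $(\Z/\ell^s\Z)^\times \to (\Z/\ell\Z)^\times$. Since $V$ is an $\ell$-group, its image in $(\Z/\ell\Z)^\times$ is trivial, so $V$ fixes $\mu_\ell$ pointwise; hence $\mu_\ell \subseteq L^V = L \cap F_0 \subseteq \overline{K} \cap F_0 = K$. (The degenerate case $V = 1$ is then immediate from the opening step of the proof of \Lref{lem:residue}, which produces a $K$-regular epimorphism $G_{F_0} \to U$.)

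With $\mu_\ell \subseteq K$, set $j_0 := \max\{j : \mu_{\ell^j} \subseteq K\}$, so that $j_0 \ge 1$ (and $j_0 \ge 2$ when $\ell = 2$). Under the hypotheses $\Gal(K(\mu_{\ell^n})/K)$ is cyclic of order $\ell^{n-j_0}$ for every $n \ge j_0$, and the identity $|V| = \ell^k = \ell^{s-j_0}$ yields $s = k + j_0$. I then introduce the enlarged cyclotomic field $L' := K(\mu_{\ell^{s+m-k}})$ and set $V^{(m-k)} := \Gal(L'F_0/F_0)$; using the exact constant field property of $F_0$, one has $V^{(m-k)} \cong \Gal(L'/K)$, cyclic of order $\ell^m = |U|$. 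I fix an isomorphism $V^{(m-k)} \cong U$ under which the natural restriction $V^{(m-k)} \to V$ corresponds to the prescribed $p : U \to V$; this is possible because both are surjections of cyclic groups of order $\ell^m$ onto the same cyclic quotient of order $\ell^k$, and the reduction $\Aut(U) \to \Aut(V)$ is surjective. Let $\psi_{\mathrm{cyc}}: G_{F_0} \to U$ be the restriction map composed with this isomorphism; then $p \circ \psi_{\mathrm{cyc}} = \varphi$ by construction, and $\psi_{\mathrm{cyc}}$ vanishes on $G_{\overline{K}F_0}$ since $\overline{K}$ contains all roots of unity.

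Finally, to reduce the constant field from $L'$ back to $L$, I invoke the opening step of the proof of \Lref{lem:residue} (via \cite[Prop.~16.3.5]{FJ}) to construct a $K$-regular epimorphism $\chi: G_{F_0} \to W$. The product $\psi := \psi_{\mathrm{cyc}} \cdot \chi : G_{F_0} \to U$ is a well-defined homomorphism (as $U$ is abelian) satisfying $p\circ \psi = \varphi \cdot 1 = \varphi$ and $\psi(G_{\overline{K}F_0}) = \chi(G_{\overline{K}F_0}) = W$, which gives the required $L$-regularity; surjectivity onto $U$ is then automatic, since the image contains $W$ and maps onto $V$ through $p$.
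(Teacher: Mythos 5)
Your proof is correct and follows essentially the same route as the paper's: observe that nontriviality of $V$ forces $\mu_\ell\subseteq K$, build a cyclotomic solution pulled back from $G_K$ (your $\psi_{\mathrm{cyc}}$ is the paper's $\psi_0$, up to the explicit choice of identification $V^{(m-k)}\cong U$), and multiply by a $K$-regular character into $W=\ker p$ to restore $L$-regularity, with surjectivity falling out because the image contains $W$ and projects onto $V$. You are somewhat more explicit than the paper about the index bookkeeping and the compatibility of the chosen isomorphism with $p$, but the underlying argument is identical.
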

\begin{proof} 
Set $W:=\ker p$. 
{If $V$ is trivial, a solution is merely a $K$-regular realization of $U$, and hence follows as in the proof of Lemma \ref{lem:residue} from \cite[Proposition 16.3.5]{FJ}.}
{Henceforth assume $V$ is nontrivial, so that $\mu_\ell\subseteq K$}, and $s$ is the unique (maximal) integer such that {$L=K(\mu_{\ell^s})$}.
Since {$L=K(\mu_{\ell^s})$}, the embedding problem $(\varphi,p)$ has a solution $\psi_0:G_K\ra U$ given by restriction to {$\Gal(K(\mu_{\ell^{s+k}})/K)$}, where {$\ell^k=|W|$}. Extend $\psi_0$ to $G_{F_0}$ by composing with the restriction $G_{F_0}\ra G_K$. Let $\varphi_2:G_{F_0}\ra W$ be a $K$-regular epimorphism, that is, $\varphi_2(G_{\oline KF_0})={W}$. Since $V$ is abelian, {the product}  $\psi = \psi_0\cdot\varphi_2$  is also a solution to $(\varphi,p)$. Since $U$ is cyclic of {$\ell$-power} order {and $V\neq 1$}, a preimage under $p$ of a generator of $V$ is a generator of $U$, and hence $\psi$ is proper. 

It remains to show that $\psi$ is $L$-regular. Since $p\circ\psi = p\circ \psi_0$, the fixed field of $\ker(p\circ\psi)$ is $F_1:=F_0(\mu_e)$. Let $\psi_W:G_{F_1}\ra W$ (resp.~$\varphi_1:G_{F_1}\ra W$) be the restriction of $\psi$ (resp.~$\varphi_2)$ to $G_{F_1}$. 
Since $\varphi_2(G_{\oline KF_0})={W}$ as $\varphi_2$ is $K$-regular, 
we have $\varphi_1(G_{\oline KF_0})=W$. 
As $\psi_0(G_{\oline KF_0})=1$, we have $\psi(G_{\oline KF_0})=\varphi_1(G_{\oline KF_0})=W$, so that $\psi$ is $L$-regular. 
\end{proof}
We note that the only property of $L/K$ used in the proof is that it embeds into a $V$-extension over $K$. 

The following lemma is used to construct the desired $D$-extension from the residue $(D/I)$-extension:
\begin{lem}\label{lem:inertia}
Let $D$ be a finite group, $I\lhd D$ a cyclic normal subgroup of order $e$, and $p_2:D\ra D/I$ the natural projection. 
Suppose $F_\pi$ is a complete discrete valuation field with characteristic $0$ residue field $M$. 
Assume that $I\cong \mu_e$ as $D$-modules, where $D$ acts on $I$ by conjugation, and on $\mu_e$ via  $p_1\circ p_2$, where $p_1:D/I\ra \Gal(M(\mu_e)/M)$ is an epimorphism. 
Finally, {let} $\varphi_1:G_M\ra D$ be a solution to $(\varphi, p_1\circ p_2)$, where $\varphi$ is the restriction map.  Then  $(p_2\circ \varphi_1,p_2)$ has a proper totally ramified solution over~$F_\pi$. 
\end{lem}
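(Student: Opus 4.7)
The plan is to first produce an unramified solution by composing $\varphi_1$ with the projection $G_{F_\pi}\to G_M$, then twist it via Kummer theory using the Brauer embedding problem formalism of Section~\ref{sec:EP} to make it totally ramified.

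Since $F_\pi$ is henselian, the natural projection $\operatorname{res}\colon G_{F_\pi}\to G_M$ yields an unramified solution $\psi_0:=\varphi_1\circ\operatorname{res}$ of $(p_2\circ\varphi_1,p_2)$ over $F_\pi$. Its associated unramified $D$-extension $\tilde L/F_\pi$ has maximal $I$-fixed subfield $L_0:=\tilde L^I$, which is an unramified $(D/I)$-extension of $F_\pi$. Since $\varphi_1$ solves $(\varphi,p_1\circ p_2)$, one has $\ker(p_2\circ\varphi_1)\subseteq\ker\varphi$, so $L_0\supseteq F_\pi(\mu_e)$; in particular $\mu_e\subset L_0$ and Kummer theory applies. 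Write $\tilde L=L_0(\sqrt[e]{a})$ with $a\in L_0^\times$; since $\tilde L/L_0$ is unramified I may take $v_{L_0}(a)=0$.

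By Hoechsmann's theorem, the equivalence classes of solutions to the Brauer embedding problem $(p_2\circ\varphi_1,p_2)$ over $F_\pi$ form a torsor under $\HLG^1(G_{F_\pi},\mu_e)\cong F_\pi^\times/(F_\pi^\times)^e$, with the twist of $\psi_0$ by $[b]$ corresponding to the $D$-extension $L_0(\sqrt[e]{ab})/F_\pi$. Twisting by $[\pi]$ for a uniformizer $\pi$ of $F_\pi$ yields a new solution $\psi$ whose $D$-extension is $L:=L_0(\sqrt[e]{a\pi})$. Because $v_{L_0}(a\pi)=1$ is coprime to $e$, the extension $L/L_0$ is totally ramified of degree $e$; therefore the inertia group of $\Gal(L/F_\pi)\cong D$ equals $\Gal(L/L_0)=I$, and $\psi$ is totally ramified.

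For properness of $\psi$, observe that $\sqrt[e]{\pi}\notin\tilde L$ (since $\tilde L/F_\pi$ is unramified while $F_\pi(\sqrt[e]{\pi})/F_\pi$ is totally ramified of degree $e$ by Eisenstein), so the Kummer cocycle $\chi_\pi$ corresponding to $[\pi]$ restricts to a surjection $\ker\psi_0=G_{\tilde L}\to\mu_e\cong I$. Since $\psi$ and $\psi_0$ agree modulo $I$, the image of $\psi$ still surjects onto $D/I$; combined with $\Im\psi\supseteq\chi_\pi(G_{\tilde L})=I$, this yields $\Im\psi=D$. The principal technical point is matching the twist of the homomorphism $\psi_0$ with the Kummer generator of the corresponding $D$-extension, which is exactly the content of Hoechsmann's theorem combined with the Kummer isomorphism recalled in Section~\ref{sec:EP}.
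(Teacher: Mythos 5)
Your proof is correct and takes essentially the same route as the paper's: extend $\varphi_1$ through $G_{F_\pi}\to G_M$ to obtain an unramified base solution, identify the solution classes of this Brauer embedding problem with $F_\pi^\times/(F_\pi^\times)^e$ via Hoechsmann's theorem and Kummer theory, and twist by the class of a uniformizer so that the Kummer generator has valuation coprime to $e$, forcing total ramification and properness. The only differences are cosmetic: you make explicit the normalization $v_{L_0}(a)=0$ and the surjectivity argument, which the paper compresses into the remark that $bs_\pi$ is again a uniformizer.
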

$$
\xymatrix{
& & G_M \ar[d]^{\varphi} \ar@/_1.2pc/[dll]_{{\varphi_1}}\ar[dl]_{p_2\circ \varphi_1} \\ 
D \ar[r]^{p_2} & D/I \ar[r]^>>>{p_1} & \Gal(M(\mu_e)/M)
}
$$
\begin{proof}
Embedding the algebraic closure $\oline M$ of $M$ into that of $F_\pi$, we extend $\varphi$ and $\varphi_1$ to $G_{F_\pi}$. Since $\ker p_2=I\cong \mu_e$ as $G_{F_\pi}$-modules, the equivalence classes {of} solutions to $({p_2\circ \varphi_1},p_2)$ are in one to one correspondence with elements of $\HLG^1(G_{F_\pi},\mu_e)$, as described in Section \ref{sec:EP}. Moreover, letting $N$ be the fixed field of $\ker(p_2\circ\varphi_1)$, the fixed field of $\ker \varphi_1$ is of the form $N(\sqrt[e]{b})$ for  $b\in N^\times$. Pick $s_\pi\in F_\pi^\times$ to be a uniformizer, and let $\alpha_\pi\in \ZLG^1(G_{F_\pi},\mu_e)$ be a class corresponding to $s_\pi (F_\pi^\times)^e$  via the Kummer isomorphism $\HLG^1(G_{F_\pi},\mu_e)\cong F_\pi^\times/(F_\pi^\times)^e$. As in Section \ref{sec:EP}, the fixed field of  $\ker(\alpha_\pi\cdot \varphi_1)$ is $N(\sqrt[e]{bs_\pi})$. As $bs_\pi$ is also a uniformizer,  $\alpha_\pi\cdot \varphi_1$ is {a} proper totally ramified solution. 
\end{proof}
Combining the above lemmas we obtain the following construction.  Recall that Lemma \ref{lem:id_pairs} associates to every ID pair $(I,D)$ with $e:=|I|$, 
a unique map $\eta_\pi:D\ra   \Gal(K(\mu_e)/K)$ with kernel $C_D(I)$, whose image is denoted by $V_\pi$. 
\begin{cor}\label{thm:construct-2}
Let $\pi=(I,D)$ be an ID pair over a number field $K$, and let $\eta_\pi:D\ra V_\pi$ be as above. Let $t$ be transcendental over $K$, and $F_0=K(t)$. Then there exist infinitely many places $s\mapsto s_\pi$ of 
$F_0(s)$ such that the completion $F_\pi$ at $s\mapsto s_\pi$ admits a $D$-extension $E_\pi/F_\pi$ 
with inertia group $I$ and residue extension $N/M$ satisfying: 
\begin{enumerate}
\item[(a)] $M/K(t)$ is  finite  $K(\mu_e)^{V_\pi}$-regular; 
\item[(b)] the constant field of $N$ is $K(\mu_e)$;
\item[(c)] $N^{C} \cong M(\mu_e)$, where $C=C_D(I)$ acts via the natural quotient $C\ra C/I$. 
\end{enumerate}
\end{cor}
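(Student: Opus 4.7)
My plan is to combine \Lref{lem:residue} (to build the residue $(D/I)$-extension) with \Lref{lem:inertia} (to build the inertial $I$-part), bridging them via a cohomological lifting step.

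First set $K' := K(\mu_e)^{V_\pi}$ and $M_0 := K'(t)$, so $M_0/K(t)$ is finite with exact constant field $K'$ and $V_\pi = \Gal(K(\mu_e)/K')$. Since $\ker \eta_\pi = C := C_D(I) \supseteq I$, the map $\eta_\pi$ factors through $D/I$ as an epimorphism $p_1: D/I \to V_\pi$ with kernel $C/I$. Both $V_\pi$ and $D/I$ are cyclic by \Lref{lem:id_pairs}(2), so I would apply \Lref{lem:residue} over $M_0$ with $L = K(\mu_e)$, $V = V_\pi$, $U = D/I$, and $p = p_1$. This yields a finite $K'$-regular extension $M/M_0$ and a $K(\mu_e)$-regular proper solution $\psi: G_M \to D/I$ to $(\varphi, p_1)$, where $\varphi: G_M \to V_\pi$ is the restriction map. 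Setting $N := \overline{M}^{\ker\psi}$, the properties (a)--(c) can then be verified directly: (a) holds since $M/K(t)$ is finite and $K'$-regular; (b) follows from the $K(\mu_e)$-regularity of $\psi$; and (c) holds because $\psi^{-1}(C/I) = \ker(p_1 \circ \psi) = \ker \varphi = G_{M(\mu_e)}$, giving $N^{C/I} = M(\mu_e)$.

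Next I would lift $\psi$ to a map $\varphi_1: G_M \to D$ satisfying $p_2 \circ \varphi_1 = \psi$ (and hence $p_1 \circ p_2 \circ \varphi_1 = \varphi$), as required by the hypothesis of \Lref{lem:inertia}. Since $\ker p_2 = I$ is cyclic and isomorphic to $\mu_e$ as a $G_M$-module (by \Lref{lem:id_pairs}(1) composed with $\varphi$), this is a Brauer embedding problem whose obstruction lies in $H^2(G_M, \mu_e) \subseteq \Br(M)[e]$. I would eliminate this obstruction by enlarging $M$ to a suitable finite $K'$-regular extension --- for instance, by adjoining a generically chosen element whose associated Kummer extension splits the obstruction class --- while arranging that the enlargement preserves properties (a)--(c). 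This lifting is the main obstacle: the Brauer obstruction need not vanish a priori, and the base change must be orchestrated so that the constant field of $M$ stays $K'$ and the cyclotomic subextension $M(\mu_e)$ of the enlarged $N$ remains intact.

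Finally, I would select a place $s \mapsto s_\pi$ of $F_0(s) = K(t)(s)$ whose residue field is $M$, e.g.\ by taking $s_\pi$ to be any primitive element of $M/K(t)$; such primitive elements form a Zariski-dense subset, yielding infinitely many places. Let $F_\pi$ denote the completion at the chosen place. Applying \Lref{lem:inertia} with the lift $\varphi_1$ produces a proper totally ramified solution $\varphi_\pi: G_{F_\pi} \to D$ to $(p_2 \circ \varphi_1, p_2) = (\psi, p_2)$; its fixed field $E_\pi$ is then a Galois $D$-extension of $F_\pi$ with inertia group $I$ (by total ramification) and residue extension $N/M$ (since $p_2 \circ \varphi_\pi$ extends $\psi$), as required.
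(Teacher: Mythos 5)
Your overall architecture (Lemma \ref{lem:residue} for the residue level, Lemma \ref{lem:inertia} for the totally ramified part) matches the paper, and your verification of (a)--(c) and the final application of Lemma \ref{lem:inertia} are fine \emph{given} your intermediate lift. But that lift is a genuine gap: you first realize the full quotient $D/I$ via $\psi\colon G_M\to D/I$ and then try to lift $\psi$ through $p_2\colon D\to D/I$. The obstruction is the pullback under $\psi$ of the class of the extension $1\to I\to D\to D/I\to 1$, living in $H^2(G_M,\mu_e)\cong \Br(M)[e]$, and it can genuinely be nonzero (already for $D=\mathbb{Z}/4$ and $I$ of order $2$ this is the classical obstruction to embedding a quadratic extension into a cyclic quartic). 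Your proposed remedy --- adjoin a ``generically chosen'' element whose Kummer extension splits the class --- is not justified: a class of order $e$ in $\Br(M)$ need not be split by any single Kummer (or cyclic) degree-$e$ extension, and even granting some finite splitting field $M'/M$, you would additionally have to make $M'$ $K(\mu_e)^{V_\pi}$-regular, linearly disjoint from $N/M$ (so that $\psi$ stays surjective and the final solution is proper onto $D$), and such that $NM'$ acquires no new constants. None of this is established, and the splitting fields one can write down canonically (function fields of Severi--Brauer varieties) are of positive transcendence degree over $M$, hence unusable here since $M$ must remain finite over $K(t)$.

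The paper's proof avoids the lifting problem entirely, and this is the missing idea. Since $D/I$ is cyclic, choose a cyclic subgroup $U\le D$ with $p_I(U)=D/I$ (generated by a lift of a generator of $D/I$) and apply Lemma \ref{lem:residue} with this $U$ in place of $D/I$: it yields $M/F_0$ finite $K(\mu_e)^{V_\pi}$-regular together with a proper $K(\mu_e)$-regular epimorphism $\varphi_1\colon G_M\to U$. The residue-level morphism $p_I\circ\varphi_1$ (whose fixed field is $N$) then comes equipped, by construction, with a homomorphism into $D$, so no embedding problem over $M$ ever has to be solved. Lemma \ref{lem:inertia} is then applied to the split group $\hat D=I\rtimes U$ (where a solution exists trivially via the section), and the resulting proper totally ramified solution is pushed forward along the natural map $\hat D\to D$, whose image is $IU=D$; this produces $E_\pi/F_\pi$ with group $D$, inertia $I$ and residue extension $N/M$, and properties (a)--(c) are verified essentially as you do. In short, the bridge between your two lemmas must be built group-theoretically through a cyclic subgroup of $D$ surjecting onto $D/I$, not cohomologically; as written, your cohomological lifting step does not go through.
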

\begin{proof}
Let $U$ be a cyclic subgroup of $D$ which projects onto $D/I$ under the  projection $p_I:D\ra D/I$ modulo $I$. In particular, the action of $U$ on $I$ by conjugation factors through that of $D/I$. 
Set $K_0:=K(\mu_e)^{V_\pi}$, so that $K_0(\mu_e)/K_0$ is cyclic with Galois group $V_\pi$, and let $\varphi:G_{F_0}\ra V_\pi$ be the restriction map.
By Lemma \ref{lem:residue}, there exists a finite $K_0$-regular extension $M/F_0$ (so that (a) holds) over which $(\varphi,(\eta_\pi)_{|U}:U\ra V_\pi)$ has a proper $K_0(\mu_e)$-regular solution $\varphi_1:G_M\ra U$. 

Let $\hat D=I\rtimes U$ with semidirect product action given by conjugation in $D$, and let $\hat p_I:\hat D\ra U$ be the  projection modulo $I$. 
Let $s_\pi$ be a primitive element for $M/F_0$, and $F_\pi$ the completion of $F_0(s)$ at $s\ra s_\pi$. Since $(\varphi_1,{\hat p_I})$ has a trivial solution $\varphi_1$ {over $F_0(s)$ and hence over $F_\pi$}, it also has a proper totally ramified solution $\hat\psi:G_{F_\pi}\ra \hat D$ over  $F_\pi$, by Lemma \ref{lem:inertia}. Letting  $\hat p:\hat D\ra D$ be the natural projection, we note that $p_I\circ \hat p = p_I\circ \hat p_I$, and hence
$$p_I \circ \hat p\circ \hat\psi = p_I\circ \hat p_I \circ {\hat\psi}= p_I\circ \varphi_1,$$
so that $\psi:=\hat p\circ \hat\psi$ is a solution to $(p_I\circ \varphi_1:G_{F_\pi}\ra D/I,p_I:D\ra D/I)$. 

Since $\hat p:\hat D\ra D$ maps $I$ to itself, $\psi$ is a totally ramified {solution}. Thus, the extension  $E_\pi/F_\pi$ fixed by $\ker(\psi)$ is a $D$-extension 
with inertia group $I$.  In particular, its residue extension $N/M$  at $s\mapsto s_\pi$ is the extension fixed by the kernel of $\varphi_0=p_I\circ \varphi_1:G_M\ra D/I$. As $\eta_\pi$ has kernel $C\supseteq I$, the map $\eta_\pi$  factors as $\oline \eta_\pi\circ p_I$ for an epimorphism $\overline \eta_\pi:D/I\ra  V_\pi$. 
Thus, the kernel of $\oline\eta_\pi\circ \varphi_0 = \eta_\pi\circ \varphi_1=\varphi$ {has fixed field} $M(\mu_e)$. In particular, $N\supseteq M(\mu_e)$. 
Moreover, as $\ker\oline\eta_\pi = p_I(C)=C/I$, we have $N^{C}=M(\mu_e)$, giving (c). 
As $\varphi_1$ is $K(\mu_e)$-regular, so are $\varphi_0$ and $N/M$, giving~(b). 
\end{proof}

\begin{rem}\label{rem:p-power}
If $D$ is an $\ell$-group, where either $\ell$ is an odd prime or $\mu_4\subseteq K$,  
we can {furthermore} choose $M=K(\mu_e)^{V_\pi}(t)$. Indeed, the same proof applies when replacing the application of Lemma \ref{lem:residue} by  Lemma \ref{lem:p-power}. 
\end{rem}

\begin{cor}\label{cor:construct-1}Let $\pi=(I,D)$ be a split ID pair over a number field $K$ with $C_D(I)=I$, and  $\eta_\pi:D\ra \Gal(K(\mu_e)/K)$ the map from Lemma \ref{lem:id_pairs}.
Then there exist infinitely many places $t\mapsto t_\pi'$ of $K(t)$ such that the completion $F_\pi$ at $t\mapsto t_\pi'$ admits a $D$-extension  $E_\pi/F_\pi$ 
with inertia group $I$ and residue extension $K(\mu_e)/K(\mu_e)^{V_\pi}$. 
\end{cor}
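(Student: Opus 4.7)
The plan is to reduce directly to Lemma \ref{lem:inertia}, bypassing Lemma \ref{lem:residue} which was needed in the proof of Corollary \ref{thm:construct-2}. The two extra hypotheses make this possible: since $C_D(I)=I$, the map $\eta_\pi:D\to V_\pi$ has kernel exactly $I$ and hence factors through an isomorphism $\bar\eta_\pi:D/I\to V_\pi$; and since $\pi$ is split, the projection $p_2:D\to D/I$ admits a section $s:D/I\to D$. Together these collapse the ``residue + inertia'' construction of Corollary \ref{thm:construct-2} to a single inertia step, and allow the residue extension to be taken to be a constant (cyclotomic) extension rather than a genuine function-field extension.

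Concretely, I would set $M:=K(\mu_e)^{V_\pi}$, so that $\Gal(M(\mu_e)/M)=V_\pi$ and the desired residue extension $K(\mu_e)/M$ coincides with the cyclotomic extension $M(\mu_e)/M$. To produce infinitely many places of $K(t)$ whose residue field equals $M$, I would pick a primitive element $\alpha$ of $M/K$ and take the places $t\mapsto \alpha+c$ for $c\in K$: the minimal polynomials of the $\alpha+c$ over $K$ are pairwise distinct shifts of one another, and each associated completion $F_\pi$ is a complete discrete valuation field with residue field $K(\alpha+c)=M$.

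I would then apply Lemma \ref{lem:inertia} with $p_1:=\bar\eta_\pi$, so that $p_1\circ p_2=\eta_\pi$ and $I\cong \mu_e$ as $D$-modules via this map by Lemma \ref{lem:id_pairs}(1), and with $\varphi_1:=s\circ p_1^{-1}\circ\varphi:G_M\to D$, where $\varphi:G_M\to V_\pi$ is the restriction map to $\Gal(M(\mu_e)/M)$. Using $p_2\circ s=\mathrm{id}_{D/I}$ one checks that $(p_1\circ p_2)\circ\varphi_1=\varphi$, so $\varphi_1$ is indeed a solution of the embedding problem $(\varphi,p_1\circ p_2)$ required by the lemma. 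Lemma \ref{lem:inertia} then furnishes a proper totally ramified solution $\psi:G_{F_\pi}\to D$ of $(p_2\circ\varphi_1,p_2)$; the fixed field $E_\pi$ of $\ker\psi$ is a $D$-extension of $F_\pi$ with inertia group $I=\ker p_2$, and since $p_2\circ\varphi_1=p_1^{-1}\circ\varphi$ shares its kernel with $\varphi$, the residue extension of $E_\pi/F_\pi$ is the fixed field of $\ker\varphi$ over $M$, namely $M(\mu_e)/M=K(\mu_e)/K(\mu_e)^{V_\pi}$.

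The only conceptual point is recognizing how the two extra hypotheses combine: the splitting provides the section $s$, the condition $C_D(I)=I$ forces $\bar\eta_\pi$ to be an isomorphism, and together they let one glue these pieces into $\varphi_1$ directly over the constant residue extension. Beyond this observation, everything reduces to routine diagram-chasing, so I do not expect any genuine obstacle.
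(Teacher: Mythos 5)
Your proposal is correct and follows essentially the same route as the paper: both reduce directly to Lemma~\ref{lem:inertia} over the constant residue field $M=K(\mu_e)^{V_\pi}$, using the hypothesis $C_D(I)=I$ to make $\overline{\eta}_\pi:D/I\to V_\pi$ an isomorphism and the splitting to supply the required initial solution $\varphi_1$. The paper is merely terser --- it writes ``the trivial solution $\varphi_1=\varphi$'' under the implicit identification $\Gal(M(\mu_e)/M)\cong D/I$ and takes any primitive element of $M/K$ --- but the underlying argument coincides with yours.
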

\begin{proof}
Let $V_\pi=\Im(\eta_\pi)$ be the image of $\eta_\pi$. Letting $t_\pi'$ be a primitive element for $M:=K(\mu_e)^{V_\pi}$, and $F_\pi$ the completion {of $K(t)$} at $t\ra t_\pi'$. Consider the restriction map $\varphi: G_{M}\ra \Gal(M(\mu_e)/M)\cong D/I$  and the projection $p_2:D\ra D/I$ modulo $I$. 
The assertion then follows from Lemma \ref{lem:inertia} applied to the split embedding problem $(\varphi,p_2)$ with the trivial solution $\varphi_1=\varphi$. 
\end{proof}

\subsection{Specialization} \label{sec:specialize}
The following lemma {is used to pass} from a parametrization the residue $(D/I)$-extensions of $K_\fp$ to parametrizations of $D$-extensions of $K_\fp$. 

Let $M/K(t)$ be a finite Galois extension,  $C$ a finite group, and $\alpha,\beta\in \mathbb R$ be positive. We  say that an extension $N/M$ with constant field $L=N\cap \oline K$ has the property 
$P(C,\alpha,\beta)$ if for every prime $\fp$ of $K$ which is inert in $L$, and every unramified $C$-extension $U_\fp/K_\fp$, there are at least $\alpha N(\fp)-\beta$ (resp.~infinitely many if $N(\fp)=\infty$) residue classes $\oline t_0$ mod $\fp$, and for each class infinitely many degree $1$ places $t\mapsto t_0\in K_\fp$ with $t_0\equiv \oline t_0$ (mod $\fp)$  such that $U_\fp$ is the specialization of $N$ at $t\mapsto t_0$.
\begin{prop}\label{prop:induct}
Let {$K$ be the fraction field of a characteristic $0$ Dedekind domain $R$ with perfect residue fields, let}  $t,s$ be indeterminates, {$F_0:=K(t)$}, and  
$E/F_0(s)$ a finite Galois extension. 
Assume that the completion of $E/F_0(s)$ at a place  $s\mapsto s_\pi$ has Galois group $D$, inertia group $I$, and its residue extension  $N/M$ has the property  $P(D/I,\alpha,\beta)$ for some $\alpha,\beta>0$, and constant field $L$. 
{Then there exists a finite set $\mathcal{T}=\mathcal{T}(E/F_0(s), \alpha,\beta)$ of prime ideals of $R$ such that the extension $E/F_0(s)$ specializes to all tame $D$-extensions of $K_\fp$ with inertia group $I$, {for all primes $\fp\notin \mathcal T$ of $R$ which are inert in $L/K$}.}
\end{prop}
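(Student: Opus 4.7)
The plan is, given a prime $\fp$ of $R$ outside a finite set $\mathcal{T}$ that is inert in $L/K$, together with a tame $D$-extension $T_\fp/K_\fp$ having inertia group $I$, to realize $T_\fp$ as the $\fp$-adic completion of a specialization of $E/F_0(s)$ at a $K$-rational pair $(t_0,s_0)\in K^2$. We proceed in two stages: first specialize $t\mapsto t_0$ to prescribe the unramified residue structure, then specialize $s\mapsto s_0$ close to a branch of $E_{t_0}/K(s)$ by invoking \Tref{thm:KLN2}.

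For the first stage, note that since $D/I$ is cyclic, the maximal unramified subextension $T_\fp^I/K_\fp$ of $T_\fp$ is the unique unramified $(D/I)$-extension of $K_\fp$. Applying the assumed property $P(D/I,\alpha,\beta)$ of $N/M$ to this extension yields, for every $\fp$ inert in $L$, at least $\alpha N(\fp)-\beta$ residue classes $\oline{t_0}\bmod \fp$, each containing infinitely many $t_0\in K$ for which the specialization of $N/M$ at $t\mapsto t_0$ completes at $\fp$ to $T_\fp^I/K_\fp$.

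For the second stage, fix a defining polynomial $P(t,s,x)\in R[t,s,x]$ of $E$. Applying \Lref{lem:goodprimes} to $P$ (with our $t$ playing the role of the lemma's parameter $s$) produces a finite set of primes $S_P$ of $R$ and a constant $d_P$ such that, for $\fp\notin S_P$, at most $d_P$ residue classes of $t_0$ modulo $\fp$ are excluded from the hypotheses of \Tref{thm:kln} when applied to the one-variable extension $E_{t_0}/K(s)$. Requiring additionally $N(\fp)>(\beta+d_P)/\alpha$, we may select a $t_0$ as in Stage 1 whose residue class is simultaneously good for \Tref{thm:kln}. Setting $\tilde s_\pi := s_\pi(t_0)\in \oline{K}$, the extension $E_{t_0}/K(s)$ then inherits from $E/F_0(s)$ the branch structure at $s\mapsto \tilde s_\pi$ (decomposition $D$, inertia $I$, residue $N|_{t_0}/K(\tilde s_\pi)$), and by Stage 1 this residue extension completes at some degree-$1$ prime $\fp'\mid\fp$ of $K(\tilde s_\pi)$ to $T_\fp^I/K_\fp$. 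Applying \Tref{thm:KLN2} to $E_{t_0}/K(s)$ at $\tilde s_\pi$ with target $T_\fp/K_\fp$ now produces infinitely many $s_0\in K$ meeting $\tilde s_\pi$ at $\fp$ such that the completion at $\fp$ of the specialization of $E/F_0(s)$ at $(t_0,s_0)$ equals $T_\fp$, as desired.

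The set $\mathcal{T}$ may be taken to consist of: primes dividing $|\Gal(E/F_0(s))|$; primes ramifying in $L/K$ or in the constant field of $M$ over $K$; primes of norm at most $(\beta+d_P)/\alpha$; the exceptional set $\mathcal{S}_0$ from \Tref{thm:kln} for $E/F_0(s)$; and the finite set $S_P$ above. The main obstacle is controlling, uniformly in the choice of $t_0$, the exceptional primes for \Tref{thm:kln} and \Tref{thm:KLN2} applied to $E_{t_0}/K(s)$; a priori this bad set depends on $t_0$, but \Lref{lem:goodprimes} (and the parametrized extension in its Remark) precisely bounds this set uniformly, giving a single finite $\mathcal{T}$.
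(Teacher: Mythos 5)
Your approach reverses the order of specialization compared to the paper. You specialize $t\mapsto t_0$ first (exactly, from the $P(D/I,\alpha,\beta)$ property), obtaining the one-variable extension $E_{t_0}/K(s)$, and then specialize $s$ near $\tilde s_\pi$ via \Tref{thm:KLN2}. The paper instead determines $s_0\in K_\fp$ meeting $s_\pi$ over the prime $(t-t_0)$ of $K_\fp[t]$, specializes $s\mapsto s_0$ first, and computes the branch data of $(EK_\fp)_{s_0}/K_\fp(t)$ at $t\mapsto t_0$ by applying \Tref{thm:kln} \emph{sideways} --- with $K_\fp[t]$ playing the role of the Dedekind domain and $s$ playing the role of the parameter --- before specializing $t\mapsto t_1$ near $t_0$ via \Tref{thm:KLN2} and \Rref{rem:KLN2}. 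Your ordering has the modest advantage of removing the factor $\gamma$ from the paper's count, since you choose $t_0$ and control bad $t_0$-residues directly rather than passing through a $\gamma$-to-one map to $s_0$-residues.

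The gap is the central assertion that ``$E_{t_0}/K(s)$ inherits from $E/F_0(s)$ the branch structure at $s\mapsto\tilde s_\pi$ (decomposition $D$, inertia $I$, residue $N|_{t_0}/K(\tilde s_\pi)$).'' This is a vertical good-reduction claim about the function-field specialization $t\mapsto t_0$, and it is not what \Lref{lem:goodprimes} controls. That lemma (applied with $t$ as parameter) only guarantees $\fp\notin\mathcal S_R(P(t_0,s,x))$ --- a condition on the mod-$\fp$ behavior of the already specialized extension $E_{t_0}/K(s)$, needed for the subsequent application of \Tref{thm:KLN2} --- and says nothing about whether the inertia and decomposition groups of $E_{t_0}/K(s)$ at $\tilde s_\pi$ equal $I$ and $D$, or whether its residue there is the specialization $N|_{t_0}$. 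Nor does \Tref{thm:kln} apply for this purpose: it treats specialization at a \emph{non}-branch value followed by completion at a prime, whereas you need to read off the branch data \emph{at the branch point} $\tilde s_\pi$ of the specialized extension. The paper's sideways use of \Tref{thm:kln} over $K_\fp[t]$ delivers exactly the needed conclusion without a separate good-reduction step, which is the real content of that application. Your route is plausibly repairable --- one can additionally avoid a finite set of bad $t_0\in\oline K$ (branch-point collisions, vertical ramification of $(t-t_0)$, constant-field jumps), uniformly in $\fp$, which is harmless since the $P$-property supplies infinitely many $t_0$ per good residue class; and the equality $\Gal(N|_{t_0}/K(\tilde s_\pi))=D/I$ does follow from Stage 1 since the completion at $\fp'$ already has group $D/I$ --- but as written the key inheritance step is asserted rather than proven.
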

\begin{proof}
We start by choosing a ``nice" polynomial  with splitting field $E$ over $K(t,s)$. Starting with  a separable polynomial $P\in K(t,s)[x]$ with splitting field $E$, by a suitable change of the variables $s$ and $t$, we may  assume each of the following:
\begin{itemize}
\item[i)] $P$ is monic with coefficients in $R[t,s]$;
\item[ii)] {$s\mapsto s_\pi\in \oline{F}_0$ is a finite place}, and 
the minimal polynomial $Q(t,s)$ of $s_\pi$ over $F_0=K(t)$ lies in $R[t,s]$ and is monic of positive degree in both $s$ and $t$.
\end{itemize}
Indeed, rendering {$s_\pi$ finite and} the polynomials $P$ and $Q$ monic over the respective rings $R[t,s]$ and $R[t]$ is elementary. To obtain a monic polynomial $Q$ in $t$ (without touching the other conditions), substitute $s+t^k$ for $s$, for sufficiently large $k$.

Next, we introduce the constants {and exceptional primes} relevant to the proof: Let $\gamma=\gamma_Q$ be the $t$-degree of $Q$. 
{Let {$\mathcal{T}'$} be the union of the {finite} set $S_P$ provided by Lemma \ref{lem:goodprimes} with the {finite} set of primes of $R$ dividing $[E:F_0(s)]$.} 
 By Lemma \ref{lem:goodprimes}, 
 there exists a constant $d=d_P$, depending only on $P$,  such that there are at most $d$ residue classes of $s_0$ mod $\fp$
for which $\fp$ becomes a bad prime in $\mathcal S_R(P(t,s_0,x))$ for some $s_0$ of this residue, {for every prime $\fp\notin \mathcal{T}'$}. 
{Let $\mathcal{T}$ be the union of $\mathcal{T}'$ and the set of primes of $R$ of finite norm less than $(\beta+\gamma d+1)/\alpha$. Note $\mathcal{T}$ is a finite since the set of primes of bounded norm in a  characteristic $0$ Dedekind domain is finite\footnote{{There are only finitely many primes $\fp$ of norm $n>0$ in such a ring $R$ since all such primes divide $nR$ and hence occur in the factorization of $nR$.}.}}.
{As $P$ and $Q$ are picked depending only on the extension $E/F_0(s)$, so are $\mathcal{S}_1$,  $\gamma$ and $d$. Thus the set $\mathcal{T}$ depends only on $E/F_0(s)$, $\alpha$ and $\beta$.} 

Let $T_\fp/K_\fp$ be a $D$-extension with inertia group $I$ at a prime $\fp\lhd R$ {inert in $L$ and not contained in $\mathcal{T}$}. 
We claim that $T_\fp/K_\fp$ is a specialization of $E/K(t,s)$. 
Set $U_\fp = T_\fp^I$. 
Let $\mathcal T_\fp\subseteq R/\fp$ denote the set of residue classes $\oline t_0$ of $t_0$ mod $\fp$ where $U_\fp/K_\fp$ is a specialization of $N/M$ for infinitely many $t\mapsto t_0$ of  residue $\oline t_0$. Note that $|\mathcal T_\fp|\geq \alpha N(\fp)-\beta$ as the property $P(D/I,\alpha,\beta)$ holds, and $\alpha N(\fp)-\beta\geq 1$ by our choice of $\fp$. 
For each $\oline t_0\in \mathcal T_\fp$,  choose $t_0\in K_\fp$ with residue $\oline t_0$ such that: 
\begin{enumerate}
\item  $U_\fp/K_\fp$ is the residue extension of $N{K_\fp}/M{K_\fp}$ at an unramified degree $1$ prime $\fP$ of $MK_\fp$ over $(t-t_0)\lhd K_\fp[t]$;
\item  $(t-t_0)$ is not in the finite set $\mathcal S_{K_\fp[t]}(P)$ in Theorem \ref{thm:kln}, where $P$ is viewed as a polynomial in $x$ with parameter $s$ over the base ring $K_\fp[t]$;\footnote{Here we substitute $K_\fp[t]$ and $s$ for the ring $R$ and parameter $t$ in Theorem \ref{thm:kln}, respectively.}
\item $t_0$ is not one of the finitely many ramification points of the nonconstant algebraic function  $s_\pi\in \oline{K_\fp(t)}\setminus\oline{K_\fp}$ {(and in particular, not one of the roots of the discriminant of $Q(t,s)\in R[t][s]$)}. 
%
\end{enumerate}
Since $\fP$ is an unramified degree $1$ prime over $(t-t_0)\lhd K_\fp[t]$, there exists a unique $s_0\in K_\fp$ such that $v_\fP(s_\pi-s_0)>0$. 
{{C}onsider the residue extension $(EK_\fp)_{s_0}/K_{\fp}(t)$ of $EK_\fp/K_\fp(t)(s)$ at $s\mapsto s_0$.} 
By (3), we have $s_\pi-s_0\not\equiv 0$ mod $\fP^2$, and hence $v_\fP(s_\pi-s_0)=1$. 
Since in addition $(t-t_0)\notin \mathcal S_{K_\fp[t]}(P)$ by (2) and  the residue extension  of $NK_\fp/MK_\fp$ at $t\mapsto t_0$ is $U_\fp/K_\fp$ by (1),  Theorem \ref{thm:kln} implies that, at $t\mapsto t_0$, {$(EK_\fp)_{s_0}/K_\fp(t)$} has decomposition group $D$, inertia group $I$, and residue extension $U_\fp/K_\fp$.

To specialize $t$, we first note that as $s_\pi$ is a root of $Q$ and 
 $Q(t,s_0)$ is not the zero polynomial mod $\fp$ by  condition ii) above, 
 the residue $\oline t_0$ is a root of $Q(t,\oline s_0)\in R/\fp[t]$, for every value $\oline s_0\in R/\fp$. 
 Thus, running over the values $t_0$ picked above, we see that every residue $s_0$ mod $\fp$ is obtained from at most $\gamma$ values $t_0$, each with distinct residues $\oline t_0=t_0$ mod $\fp$.  
Since there are $\alpha N(\fp)-\beta$ choices for $\oline t_0$, and 
at least $(\alpha N(\fp)-\beta)/\gamma>d$ choices for $\oline s_0$, we may choose $t_0$ and $s_0$ such that $\fp$ is not a bad prime in $\mathcal S_R(P_{s_0})$ for $P_{s_0}=P(t,s_0,x)\in R[t,x]$. 
{As in addition $\fp$ is coprime to $[E:F_0(s)]$,  Theorem \ref{thm:KLN2} and Remark \ref{rem:KLN2} imply} there exist (infinitely many) $t_1\in K_\fp$, with $v_\fp(t_1-t_0)$  positive (and hence of residue $\oline t_0$) and coprime to $e$, such that the specialization of {$(EK_\fp)_{s_0}/K_\fp(t)$} at $t\mapsto t_1$ is $T_\fp/K_\fp$. 
{This shows the assertion.}
\end{proof}
\begin{rem}\label{rem:induct}
We note that the proof shows that the number of possible mod $\fp$ residues of  specialization values $(t_1,s_0)$ 
is at least $\alpha'N(\fp)-\beta'$ (or infinite if $N(\fp)=\infty$), where $\alpha',\beta'>0$ are constants that depend only on $E/K$, $\alpha$ and $\beta$. 
In particular, as soon as $\alpha'N(\fp)-\beta'>0$,  the set of possible values $(t_1,s_0)\in \mathbb{A}_{{K_\fp}}^2$ is Zariski dense. Namely, there are infinitely many ``good" values $t_1$, and for each such value infinitely many good values $s_0$.
\end{rem}

Finally, for a number field $K$, 
we show that a $K$-regular extension has the property $P(C,\alpha,\beta)$ for certain cyclic subgroups $C$, and constants $\alpha,\beta>0$. 
\begin{lem}
\label{lem:spec_main}
Let $K$ be a number field and $M/K(t)$ a finite $K$-regular Galois extension. 
Let $N/M$ be a Galois extension with constant field $L=N\cap \oline K$, and $C\leq \Gal(N/M)$ a cyclic subgroup which projects onto $\Gal(LM/M)$.
Then $N/M$ has the property $P(C,\alpha,\beta)$ for some constants $\alpha,\beta>0$ depending only on $N/K$. 
\end{lem}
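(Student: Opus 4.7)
The plan is to follow the Lang--Weil plus twisting strategy of \Tref{thm:debesghazi_ext}, now applied to the (non-$K$-regular) cover arising from $N/M$. First, I would pass to the geometric picture: let $Y$ be the smooth projective model of $M/K$, which is absolutely irreducible by the $K$-regularity of $M/K(t)$, and let $f:X\to Y$ be the Galois cover with group $D:=\Gal(N/M)$ corresponding to $N/M$. Since the algebraic closure of $K$ in $N$ is $L$, the scheme $X$ is irreducible over $K$ but decomposes over $\oline K$ into $[L:K]$ absolutely irreducible components, transitively permuted by $\Gal(L/K)$. Let $\varphi:\pi_1(Y')_K\to D$ represent $f$; its morphism of constants is the natural projection $\varphi_K:G_K\twoheadrightarrow \Gal(L/K)$.

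Next, for a prime $\fp$ inert in $L$ and the (unique) unramified extension $U_\fp/K_\fp$ of degree $|C|$, I would construct the target of a twist. Inertness gives a surjection $G_{K_\fp}\twoheadrightarrow \Gal(L_\fp/K_\fp)=\Gal(L/K)$ sending Frobenius to Frobenius. Using that $C$ maps onto $\Gal(LM/M)=\Gal(L/K)$, a short cyclic-group CRT argument selects a generator $c$ of $C$ whose image in $\Gal(L/K)$ equals this Frobenius (one needs $k$ with $\gcd(k,|C|)=1$ lying in a prescribed residue class modulo $|\Gal(L/K)|$, which is always possible since $|\Gal(L/K)|$ divides $|C|$). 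Identifying $\Gal(U_\fp/K_\fp)\cong C$ by sending Frobenius to $c$ then yields an unramified $\psi_\fp:G_{K_\fp}\twoheadrightarrow C\hookrightarrow D$ that projects onto the restricted morphism of constants of $\varphi_\fp$.

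The main work is in counting $K_\fp$-points on the twist $f^{\psi_\fp}:\tilde X\to Y$. As in the proof of \Tref{thm:debesghazi_ext} via \cite[\S3.1.1]{DL}, the projection property of $\psi_\fp$ guarantees that $\tilde X$ contains an absolutely irreducible component $Z_\fp$ defined over $K_\fp$, whose geometric genus is controlled by the geometry of $X$, hence by $N/K$ alone. Fixing an integral model and excluding a finite set of primes depending only on $N/K$, one then reduces modulo $\fp$ and invokes the Lang--Weil bound to produce $N(\fp)+O(\sqrt{N(\fp)})$ smooth mod-$\fp$ points, each lifting by Hensel to a $K_\fp$-point of $\tilde X$. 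By the twisting lemma \cite[Lem.\ 2.1]{DG12}, each such lift yields a $y_0\in Y(K_\fp)$ whose specialization morphism is conjugate to $\psi_\fp$, so the residue extension of $N\otimes K_\fp$ at a degree-$1$ prime over $(t-t_0)$ corresponding to $y_0$ equals $U_\fp$, where $t_0:=t(y_0)$.

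Finally, to translate this into a count of residue classes of $t_0$, one composes with the finite map $Y\to \mathbb{P}^1$ of degree $[M:K(t)]$, which bounds how many $y_0$ share a given $t$-value; combined with controllable branch and reduction data, this produces at least $\alpha N(\fp)-\beta$ distinct residue classes of $t_0$ mod $\fp$ for constants $\alpha,\beta>0$ depending only on $N/K$. Infinitely many lifts within each class follow from one-dimensionality of $Z_\fp$ together with Hensel's lemma. The main obstacle will be verifying that $Z_\fp$ is absolutely irreducible over $K_\fp$ rather than breaking into Galois-conjugate components; this is exactly the point where both the inertness of $\fp$ in $L$ and the hypothesis that $C$ surjects onto $\Gal(LM/M)$ enter.
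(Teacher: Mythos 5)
Your argument is correct in substance, but it is organized differently from the paper's. The paper's proof is a short reduction to \Tref{thm:debesghazi_ext}: it passes to the Galois closure $\Omega_N/K(t)$, notes that $MK_\fp$ stays $K_\fp$-regular and that $\Gal(NK_\fp/MK_\fp)\cong\Gal(N/M)$, lifts a generator $x$ of $C$ to an element $\widehat{x}\in\Gal(\Omega_N K_\fp/MK_\fp)$, and then quotes \Tref{thm:debesghazi_ext} and \Rref{rem:debesghazi_ext} over the rational base $K_\fp(t)$ to get $\alpha N(\fp)-\beta$ residue classes of unramified specializations with group $\langle\widehat{x}\rangle$; restricting to $M$ and $N$ gives the trivial specialization of $MK_\fp/K_\fp(t)$ and a $C$-specialization of $NK_\fp/MK_\fp$. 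You instead avoid the Galois closure and re-run the twisting/Lang--Weil/Hensel machinery directly for the cover $X\to Y$ attached to $N/M$ over the (possibly higher-genus) curve $Y$, which is not literally covered by the statement of \Tref{thm:debesghazi_ext} (stated for base $\mathbb{P}^1$) but is supported by the general twisting setup of Section \ref{sec:twists} and \cite[\S 3.1.1]{DL}; you then need an extra push-down along $Y\to\mathbb{P}^1$ of degree $[M:K(t)]$ to convert the point count into a count of residue classes of $t_0$. The trade-off: the paper's route uses its own theorem as a black box and works over a base where ``residue classes of $t_0$'' is immediate, while your route makes fully explicit where the two hypotheses enter (inertness of $\fp$ in $L$ and surjectivity of $C$ onto $\Gal(LM/M)$ are exactly what let your CRT step produce a generator $c$ of $C$ hitting the Frobenius, i.e.\ what makes $\psi_\fp$ project onto the morphism of constants), at the cost of re-verifying geometric irreducibility of the twisted component and uniformity of the Lang--Weil constants over the non-rational base. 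Two routine points you should still pin down: (i) to match the way $P(C,\alpha,\beta)$ is used in \Pref{prop:induct}, the place of $MK_\fp$ given by $y_0$ must be unramified of degree $1$ over $(t-t_0)$, so exclude the finitely many branch points of $N/K(t)$ (a set depending only on $N/K$); (ii) since $P(C,\alpha,\beta)$ quantifies over all primes inert in $L$, absorb the finitely many primes of bad reduction of your model by enlarging $\beta$ so that the bound is vacuous there.
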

\begin{proof}
Let $\fp$ be a prime of $K$ which is inert in $L$. First note that since $M$ is $K$-regular,  $M K_\fp$ is also $K_\fp$-regular (since the inertia groups in $\Gal(MK_\fp/K_\fp(t))$ still generate $\Gal(M/K(t))$). Similarly as $L$ is the constant field of $N$,   the constant field of $N K_\fp$ is $L K_\fp$. 
Thus, $\Gal(NK_\fp/MK_\fp)$ is isomorphic to $\Gal(N/M)$. 
Let $\Omega_N/K(t)$ be the Galois closure of $N/K(t)$.
Then in particular, the composition of the restriction map with the above isomorphism gives an epimorphism $p:\Gal(\Omega_N K_\fp/MK_\fp)\ra \Gal(N/M)$. 
Letting $x$ be a generator of $C$,  choose  $\widehat{x}\in p^{-1}(x)$. 
Then by Theorem \ref{thm:debesghazi_ext} and Remark \ref{rem:debesghazi_ext},  there exist at least $\alpha N(\fp)-\beta$  residue classes  $\oline t_0$ mod $\fp$, and infinitely many places $t\mapsto t_0\in K_\fp$ of each residue, at which $\Omega_N K_\fp/ K_\fp(t)$ specializes to an unramified extension of $K_\fp$ with group $\langle\widehat{x}\rangle$, for  $\alpha,\beta>0$ depending only on $N/K$. 
In particular, $MK_\fp/K_\fp(t)$ specializes to the trivial extension, and $NK_\fp/MK_\fp$ specializes to an extension with group $p(\langle\widehat{x}\rangle)=C$. 
\end{proof}

The following corollary combines Proposition \ref{prop:induct} and Lemma \ref{lem:spec_main} to show that the extension constructed in Corollary \ref{thm:construct-2}  specializes to every $D$-extension of $K_\fp$ with inertia group $I$, {for all but finitely many primes $\fp$ of $K$}. 

 For an ID pair {$\pi = (I,D)$} with $e:=|I|$, 
let $\eta_\pi:D\ra   \Gal(K(\mu_e)/K)$ be the associated map in Lemma \ref{lem:id_pairs}, and let $V_\pi$ be its image. 
\begin{cor}\label{cor:2-dim}
Let $\pi=(I,D)$ be an ID pair over a number field $K$, and $e:=|I|$. 
Let $E/K(t,s)$ be a Galois extension whose completion at a place $s\mapsto s_\pi$ is an extension $E_\pi/F_\pi$
with residue extension $N/M$ satisfying conditions (a)-(c) of Corollary \ref{thm:construct-2}.  
Then there exists $\varepsilon>0$, depending only on $E/K$, 
such that every tame $D$-extension of $K_\fp$ with inertia group $I$ is a specialization of $E/K(t,s)$,  for every prime $\fp$ of $K$ with $N(\fp)>\varepsilon$. 
\end{cor}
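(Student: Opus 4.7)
The plan is to reduce the problem to the subfield $K_0 := K(\mu_e)^{V_\pi}$ of $K(\mu_e)$ and apply \Lref{lem:spec_main} and \Pref{prop:induct} over $K_0$. The key observation is that for any prime $\fp \in P_K(\pi)$ unramified in $K(\mu_e)/K$, \Lref{lem:id_pairs}(2) says the Frobenius at $\fp$ generates $V_\pi = \Gal(K(\mu_e)/K_0)$, so $\fp$ splits completely in $K_0/K$, and every prime $\fP_0$ of $K_0$ above $\fp$ satisfies $K_{0,\fP_0} = K_\fp$ and is inert in $L/K_0$, where $L := K(\mu_e)$ is the constant field of $N$ by (b). For primes $\fp \notin P_K(\pi)$ of sufficiently large norm, the statement is vacuous by \Lref{lem:id_pairs}(2), so it suffices to treat $\fp \in P_K(\pi)$.

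I next verify the hypotheses of \Lref{lem:spec_main} over $K_0$ for $N/M$. By (a), $M/K_0(t)$ is $K_0$-regular; it is Galois because $M/K(t)$ is Galois (as in the construction of Corollary~\ref{thm:construct-2}, where abelianness of $V_\pi$ renders the relevant subgroup normal). By (c), $\Gal(LM/M) \cong D/C \cong V_\pi$, and the natural quotient $\Gal(N/M) = D/I \to \Gal(LM/M)$ is the map $\oline{\eta_\pi}$ induced by $\eta_\pi$. Since $D/I$ is cyclic by \Lref{lem:id_pairs}(2), the full group $D/I$ itself serves as the required cyclic subgroup of $\Gal(N/M)$ projecting onto $\Gal(LM/M)$. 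Hence \Lref{lem:spec_main} applied over $K_0$ provides constants $\alpha, \beta > 0$, depending only on $E/K$, such that $N/M$ has the property $P(D/I, \alpha, \beta)$ over $K_0$.

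I then apply \Pref{prop:induct} over $K_0$ to the Galois extension $E \cdot K_0(t,s)/K_0(t,s)$. Since the residue field $M$ of $F_\pi$ contains $K_0$, equal-characteristic Cohen structure (or iterated Hensel lifting) embeds $K_0$ into $F_\pi$, and the completion of $E \cdot K_0(t,s)$ at $s \mapsto s_\pi$ is still $E_\pi/F_\pi$, with Galois group $D$, inertia $I$, and residue extension $N/M$. The proposition thus yields a finite set $\mathcal T$ of primes of the ring of integers of $K_0$ such that $E \cdot K_0(t,s)/K_0(t,s)$ specializes to every tame $D$-extension of $K_{0,\fP_0}$ with inertia $I$, for every $\fP_0 \notin \mathcal T$ inert in $L/K_0$.

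Finally, I combine the pieces. Take $\varepsilon$ greater than both $\max_{\fP_0 \in \mathcal T} N(\fP_0)$ and the norms of primes of $K$ ramifying in $K(\mu_e)/K$. For any $\fp$ of $K$ with $N(\fp) > \varepsilon$ and $\fp \in P_K(\pi)$, the first paragraph supplies a prime $\fP_0$ of $K_0$ above $\fp$ with $K_{0,\fP_0} = K_\fp$, lying outside $\mathcal T$ and inert in $L/K_0$. Every tame $D$-extension of $K_\fp$ with inertia $I$ is then a specialization of $E \cdot K_0(t,s)$ at some $(t_0, s_0) \in K_\fp^2$, and since $K_0 \subseteq K_\fp$ gives $(E \cdot K_0(t,s)) \otimes_{K_0} K_\fp = E \otimes_K K_\fp$, it is equivalently a specialization of $E/K(t,s)$ at the same $K_\fp$-rational place. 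The main subtlety is the base change from $K$ to $K_0$: confirming that $M/K_0(t)$ remains Galois, that the completion data at $s \mapsto s_\pi$ is preserved, and that $K_\fp$-rational specializations translate faithfully across the two base fields.
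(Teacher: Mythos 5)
Your proposal is correct and follows essentially the same route as the paper: restrict to $\fp\in P_K(\pi)$, use Lemma \ref{lem:id_pairs}(2) to produce a degree-$1$ prime of $K_0=K(\mu_e)^{V_\pi}$ over $\fp$ that is inert in $K(\mu_e)$, apply Lemma \ref{lem:spec_main} over $K_0$ with the cyclic group $D/I$ to get property $P(D/I,\alpha,\beta)$, then apply Proposition \ref{prop:induct} to $EK_0/K_0(t,s)$ and transfer specializations back to $E/K(t,s)$ via $K_0\subseteq K_\fp$. The extra verifications you flag (Galois-ness of $M/K_0(t)$ via the construction, preservation of the completion under base change to $K_0$) are exactly the points the paper treats implicitly, so there is no substantive difference.
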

\begin{proof}
Let $\fp\in P_K(\pi)$, that is, a prime for which there exists a $D$-extension of $K_\fp$ with inertia group $I$. By Lemma \ref{lem:id_pairs}.(c), the Frobenius element of $\fp$ in $K(\mu_e)/K$ is a generator of $V_\pi$. 
In particular, a prime $\fP$ of $K_0:=K(\mu_e)^{V_\pi}$ over $\fp$ is of degree $1$, and is inert in $K(\mu_e)$. 
Note that $K_0\subseteq  (K_0)_\fP\cong K_\fp$. 

By construction,  $N/M$ has cyclic Galois group $D/I$, and  its {constant} extension is $K(\mu_e)/K_0$. Thus, $N/M$ has the property $P(D/I,\alpha,\beta)$ over $K_0$ for some $\alpha,\beta>0$ depending only $N/K$, by Lemma \ref{lem:spec_main}.  As $\fP$ is inert in $K(\mu_e)$, Proposition \ref{prop:induct} 
gives a constant $\varepsilon>0$,
depending only on $E/K$ (since the choice of $\alpha,\beta$ depends only on $E/K$) with the following property: Every tame local $D$-extension with inertia group $I$ of $K_\fp$ is a specialization over $K_\fp$ of $EK_0/K_0(t,s)$ and hence of $E/K(t,s)$, when $N(\fp)>\varepsilon$. {Here, $\varepsilon$ is the largest norm of a prime in the set $\mathcal S$ of Proposition \ref{prop:induct}.} 
\end{proof}

In case the ID-pair $(I,D)$ is split and $C_D(I)=I$, it is even {simpler} to deduce that the extension in Corollary \ref{cor:construct-1} specializes to every $D$-extension with inertia group $I$. 
Let $\eta_\pi$ and $V_\pi$ be as in Lemma \ref{lem:id_pairs}.  
\begin{cor}\label{cor:1dim2} 
Let $\pi=(I,D)$ be a split ID pair over a number field $K$ with $C_D(I)=I$. 
Let $e:=|I|$, and $\eta_\pi:D\ra V_\pi$ be as above. 
Let $E/K(t)$ be a Galois extension whose completion at a place $t\mapsto t_\pi$ is an extension $E_\pi/F_\pi$ with residue extension $K(\mu_e)/K(\mu_e)^{V_\pi}$.
Then 
every tame $D$-extension of $K_\fp$ with inertia group $I$ is a specialization of $E/K(t)$,  for every prime 
$\fp\notin\mathcal S_{0}(E/K(t))$.
\end{cor}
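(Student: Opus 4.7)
The plan is to apply Theorem \ref{thm:KLN2} directly at the place $t\mapsto t_\pi$, after checking that the unramified part of any prescribed target extension $T_\fp/K_\fp$ is automatically matched by the residue extension $K(\mu_e)/K(\mu_e)^{V_\pi}$.

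First I would observe that, since $C_D(I)=I$, the map $\eta_\pi:D\to V_\pi$ from Lemma \ref{lem:id_pairs} factors through an isomorphism $D/I\xrightarrow{\sim}V_\pi$. In particular, for any tame $D$-extension $T_\fp/K_\fp$ with inertia group $I$, the unramified part $T_\fp^I$ is a cyclic $V_\pi$-extension of $K_\fp$, and hence is \emph{uniquely} determined (as the unique unramified extension of $K_\fp$ of degree $|V_\pi|$).

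Next, fix $\fp\in P_K(\pi)\setminus\mathcal{S}_0(E/K(t))$ and set $K_0:=K(\mu_e)^{V_\pi}$. By Lemma \ref{lem:id_pairs}(2), the Frobenius at $\fp$ in $K(\mu_e)/K$ generates $V_\pi$; equivalently, any prime $\fp'$ of $K_0$ above $\fp$ has residue degree $1$ over $\fp$ and is inert in $K(\mu_e)/K_0$. The completion of $K(\mu_e)$ at a prime above $\fp'$ is therefore precisely the unique unramified $V_\pi$-extension of $K_\fp$, i.e.\ $K(\mu_e)K_{\fp'}=K(\mu_e)K_\fp$. Combining with the previous paragraph, this forces
\[
T_\fp^I \;\cong\; K(\mu_e)K_\fp \;=\; N\cdot K(t_\pi)_{\fp'},
\]
where $N=K(\mu_e)$ and $M=K_0$ are the residue extension and its constant base from the hypothesis.

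Finally, I would invoke Theorem \ref{thm:KLN2} at the branch point $t_\pi$ of $E/K(t)$, with the degree $1$ prime $\fp'$ of $K_0=K(t_\pi)$ above $\fp$ and with the prescribed target $T_\fp/K_\fp$: the hypotheses $\fp\notin\mathcal{S}_0$, $\fp'$ of degree $1$, and the matching of the unramified parts just verified are exactly what Theorem \ref{thm:KLN2} requires (the decomposition group $D'=\varphi^{-1}(D_{t_\pi,\fp'})$ is all of $D$ since $\fp'$ is inert in $N/M$). The theorem then produces (infinitely many) $t_0\in K$ whose specialization $E_{t_0}/K$ has completion $T_\fp/K_\fp$ at $\fp$, which is what we want. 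I do not foresee a serious obstacle beyond being careful with the case $t_\pi=\infty$, which is handled by the conventions of \S\ref{sec:dec}; everything else is a bookkeeping verification that the unique possible unramified $V_\pi$-extension of $K_\fp$ coincides with the residue of $E/K(t)$ at $t_\pi$ after base change.
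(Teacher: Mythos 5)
Your proposal is correct and follows essentially the same route as the paper: identify the unramified part $T_\fp^I$ with $K(\mu_e)K_\fp$ (using that $\fp\in P_K(\pi)$ forces the Frobenius to generate $V_\pi$, so the degree-$1$ prime $\fp'$ of $K(t_\pi)=K(\mu_e)^{V_\pi}$ is inert in $K(\mu_e)$), note that inertness makes $D'=\varphi^{-1}(D_{t_\pi,\fp'})=D$, and then conclude by Theorem \ref{thm:KLN2}. The only cosmetic differences are that you verify the matching of unramified parts via uniqueness of unramified local extensions of a given degree rather than via Lemma \ref{lem:id_pairs}(1), and you apply Theorem \ref{thm:KLN2} over $K$ directly, whereas the paper first base-changes to $L_\fP\cong K_\fp$; both are fine.
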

\begin{proof}

As in Corollary \ref{cor:2-dim}, let $\fp\in P_K(\pi)\setminus \mathcal S_{0}(E/K(t))$,  
so that a prime $\fP$ of $L:=K(\mu_e)^{V_\pi}$ over $\fp$ is of degree $1$, is inert in $K(\mu_e)$, and has completion $L_\fP\cong K_\fp \supseteq L$. 
Let $T$ be a tame $D$-extension of $K_\fp\cong L_\fP$ with inertia group $I$. 
As $I=C_D(I)$, Lemma \ref{lem:id_pairs}.(1) implies that $T^I\cong L_\fP(\mu_e)$.
As $\fP$ is inert in $L(\mu_e)/L$, the extension $T^I/L_\fP$ is the compositium of $L(\mu_e)/L$ with $L_\fP$.  
Since $\fp\notin \mathcal S_{0}{(E/K(t))}$, and since $EL_\fP/L_\fP(t)$ is a $D$-extension with inertia group $I$ and residue  $L_\fP(\mu_e)$, there exist (infinitely many) $t_0\in K_\fp\cong L_\fP$ 
such that the specialization of $EL_\fP/L_\fP(t)$ at $t\mapsto t_0$ is  $T/K_\fp$, by Theorem \ref{thm:KLN2}. 
\end{proof}

\section{Proof of the main theorem and further rationality analysis}\label{sec:proof-main}

\subsection{Proof of part (2a) of the main theorem}
Let $\Pi=\Pi_{G,K}$ denote the set of all ID pairs $(I,D)$ over $K$ with $D\leq G$. Let $\eta_\pi:D\ra \Gal(K(\mu_e)/K)$ be the map associated to $\pi\in \Pi$ in Lemma \ref{lem:id_pairs}, and $V_\pi$ its image. 
By Corollary \ref{thm:construct-2}, we may choose distinct places $s\mapsto s_\pi$, $\pi\in \Pi$ of $K(t)(s)$, and extensions $E_\pi/F_\pi$ of the completion $F_\pi$ of $K(t,s)$ at $s\mapsto s_\pi$ satisfying the following. For $\pi=(I,D)$, the extension $E_\pi/F_\pi$ has Galois group $D$, inertia group $I$, and  residue extension $N/M$ satisfying: 
\begin{enumerate}
\item[(a)] $M/K(\mu_e)^{V_\pi}(t)$ is  finite $K(\mu_e)^{V_\pi}$-regular, where $e:=|I|$; 
\item[(b)] the  constant field of $N$ is $K(\mu_e)$;
\item[(c)] $N^{C} \cong M(\mu_e)$, where $C:=C_D(I)$ acts via the natural quotient $C\ra C/I$. 
\end{enumerate}
Since $G$ has a generic extension over $K$, \cite[Theorem 5.8]{Sal} gives a $G$-extension $E/K(t,s)$ with decomposition group conjugate to $D$ and completion $E_\pi/F_\pi$ at $s\mapsto s_\pi$, for every $\pi=(I,D)\in \Pi$. 
 By Corollary \ref{cor:2-dim}, every tame $D$-extension  $T_\fp/K_\fp$ with inertia group $I$ is a specialization of $E/K(t,s)$ at some $t\mapsto t_\pi'$, $s\mapsto s_\pi'$, for every $\pi\in \Pi$ and prime $\fp$ of $K$ with $N(\fp)>\varepsilon$ for  a constant $\varepsilon$ depending only on $E/K$.

\subsection{Proof of part (1) of the main theorem}
Recall from Section \ref{sec:notation} that  a $G$-Galois extension $\mathcal L/K$ of \'etale algebras is a specialization of a $G$-extension $E/F$ of fields at a place $\nu$ if and only if the field extension $L/K$, underlying $\mathcal L/K$,  is the residue extension at $\nu$.  
Thus, the  local dimension $\ld_K(G)$ (resp.\  parametric dimension $\pd_K(G)$) of $G$ over $K$ is the minimal integer $d\geq 0$ for which there exist $G$-extensions $E_i/F_i$, $i=1,\ldots,r$, such that every field $H$-extension of $K_\fp$ (resp.\ of $K$) for $H\leq G$, is a specialization of some $E_i/F_i$, for all but finitely many primes $\fp$. 
\begin{lem}\label{lem:subgroup}
Assume $G$ has local (resp., parametric) dimension $\le d$ over $K$. Then the same holds for any subgroup of $G$.
\end{lem}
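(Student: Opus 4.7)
The plan is to take, for each $i$, the intermediate field $F_i' := E_i^H$ fixed by the $H$-action on $E_i$ (using $H \le G = \Gal(E_i/F_i)$). Since $E_i/F_i$ is algebraic, $F_i'$ has the same transcendence degree $d$ over $K$ as $F_i$, and $E_i/F_i'$ is a Galois field extension with group $H$. The claim will be that the family $(E_i/F_i')_{i=1,\dots,r}$ inherits the local (resp.\ parametric) parametrization property for $H$ from the family $(E_i/F_i)_{i=1,\dots,r}$ for $G$, with the same finite exceptional set of primes.

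Given any field $H'$-extension $L/K_\fp$ with $H' \le H \le G$, the hypothesis supplies, for $\fp$ outside a finite set $T$, a $K_\fp$-rational place $\nu$ of some $F_i$ away from the branch locus, together with a prime $\fP$ of $E_i$ above $\nu$ whose residue extension realizes $L/K_\fp$. I would choose $\fP$ so that its decomposition group in $G$ is exactly $H'$ (rather than merely a conjugate), which is possible since $L/K_\fp$ comes equipped with an embedding of its Galois group into $G$. Then I would set $\nu' := \fP \cap F_i'$ and verify that $\nu'$ is a $K_\fp$-rational place of $F_i'$, away from the branch locus of $E_i/F_i'$, whose residue extension via $\fP$ is again $L/K_\fp$.

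The key (and essentially only nontrivial) step is the $K_\fp$-rationality of $\nu'$. It will follow from the decomposition group computation in the tower $F_i \subset F_i' \subset E_i$: the decomposition group of $\fP$ in $H = \Gal(E_i/F_i')$ is $H \cap H' = H'$ (using $H' \le H$), so $[k(\fP):k(\nu')] = |H'| = [k(\fP):k(\nu)]$, which forces $k(\nu') = k(\nu) = K_\fp$, and the residue extension $k(\fP)/k(\nu')$ is still $L/K_\fp$. Unramifiedness of $\fP$ over $\nu'$ in $E_i/F_i'$ is then immediate from unramifiedness over $\nu$ in $E_i/F_i$ by multiplicativity of ramification indices, so $\nu'$ avoids the branch locus of $E_i/F_i'$. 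The parametric dimension case is handled identically upon replacing $K_\fp$ by $K$ throughout. I do not anticipate any real obstacle: the lemma reduces to an exercise in Galois correspondence for the subtower $F_i \subset F_i' \subset E_i$, and no finer analysis of the constructions from Sections \ref{sec:prep} and \ref{sec:proof1} is needed.
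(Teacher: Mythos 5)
Your proof is correct and takes essentially the same route as the paper's: pass to the intermediate field $F_i' = E_i^{H}$ and observe, via a decomposition group computation, that the prime $\fP\cap F_i'$ has degree $1$ over $\nu$ and carries the same residue extension. Your explicit check that $D(\fP\mid\nu')=D(\fP\mid\nu)\cap H=H'$ for $H'\leq H$ in fact fills in a detail the paper leaves implicit, since the paper states the key claim only for residue extensions whose decomposition group equals the subgroup whose fixed field is taken, while your version correctly covers proper subgroups $H'\lneq H$ as well.
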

\begin{proof}
Let $E/F$ be a $G$-extension of function fields such that $K$ is the constant field of $F$.
For $H\leq G$, let $L$ be an overfield of $K$, and assume that $\nu$ is a place of $FL$ at which the residue extension of $EL/FL$ is a field $H$-extension $M/L$. This  implies that there is a place $\omega$ of $E^HL$ of degree $1$ over $\nu$ such that the residue of $EL/E^HL$ at $\omega$ is $M/L$.
The assertion now follows by choosing $L=K$ for parametric dimension or $L=K_\fp$ for primes $\fp$ of $K$ for local dimension.
 \end{proof}

In particular, since any finite group embeds as a subgroup into a group possessing a generic extension (such as a suitable symmetric group), Lemma \ref{lem:subgroup} and part (2a) of the main theorem  imply  part (1) of the main theorem. 

The  argument in Lemma \ref{lem:subgroup} does not apply in the Hilbert-Grunwald dimension setting, since it is not clear how to construct $K$-rational places of $E^H$ for $H\leq G$. However, it  yields a weak version of the Hilbert--Grunwald property, see Corollary \ref{cor:hgd}. 

\subsection{Proof of part (2b) of the main theorem} 
\label{sec:proof_grunwald}
Part (2b) of the main theorem is derived from (2a) using:
\begin{lem}
\label{cor:grunwald}
Let $K$ be a number field, $G$ a finite group and 
$E/F$  a $G$-extension such that $F$ is purely transcendental over $K$, and such that for all primes $\fp$ of $K$ outside of some finite set $T$, every $G$-extension of $K_{\fp}$ occurs as a specialization of $E/F$ at some place away from the branch locus of $E/F$.
Then every Grunwald problem for $G$ away from $T$ has a solution which is a field extension with Galois group $G$, and contained in the set of specializations of $E/F$.
\end{lem}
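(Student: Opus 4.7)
The plan is to combine weak approximation with $\mathfrak{p}$-adic continuity of specializations. Write $F=K(t_1,\dots,t_d)$ and view $E/F$ as corresponding to a $G$-cover $f\co X\to Y$ with $Y$ an open subvariety of $\mathbb{A}_K^d$ obtained by removing the branch divisor $B$. For any overfield $K'$ of $K$ and any $a\in K'^d\setminus B(K')$, the specialization $E_a/K'$ is the residue $G$-\'etale algebra at $a$. Given a Grunwald problem $L^{(\fp)}/K_\fp$ for $\fp\in S$ with $S\cap T=\emptyset$, the hypothesis furnishes, for each $\fp\in S$, a point $a^{(\fp)}\in K_\fp^d\setminus B(K_\fp)$ whose specialization $E_{a^{(\fp)}}\otimes_K K_\fp$ is isomorphic to $L^{(\fp)}$ as $G$-\'etale algebras.

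The key continuity input is that, for fixed $\fp$, the specialization map $a\mapsto E_a\otimes_K K_\fp$, viewed as an assignment from $K_\fp^d\setminus B(K_\fp)$ to isomorphism classes of $G$-\'etale $K_\fp$-algebras, is $\fp$-adically locally constant. I would justify this by Krasner's lemma applied to a defining polynomial of $E/F$ (whose roots, by Hensel, vary continuously in the coefficients); equivalently, by the twisting-lemma viewpoint of \S\ref{sec:twists}, the $K_\fp$-points of the twist $f^\psi$ by the specialization morphism $\psi=\psi_{a^{(\fp)}}$ form an $\fp$-adic neighborhood of $a^{(\fp)}$ (by smoothness and Hensel), so every sufficiently close point has specialization morphism equivalent to $\psi$. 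In particular there exists an $\fp$-adic neighborhood $U_\fp$ of $a^{(\fp)}$ on which the specialization over $K_\fp$ is uniformly isomorphic to $L^{(\fp)}$.

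Next I would invoke weak approximation for $\mathbb{A}^d$ over the number field $K$: the diagonal image of $K^d$ is dense in $\prod_{\fp\in S}K_\fp^d$, so one can find $a\in K^d$ lying simultaneously in $U_\fp$ for every $\fp\in S$, and also in the complement of $B(K)$ (a nonempty Zariski-open condition compatible with the $\fp$-adic neighborhoods). By the continuity step, $E_a\otimes_K K_\fp\cong L^{(\fp)}$ for each $\fp\in S$.

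Finally, the specialization $E_a/K$ is a $G$-\'etale algebra, which by \S\ref{sec:etale} decomposes as $L_1\oplus\cdots\oplus L_r$ for mutually $K$-isomorphic fields permuted transitively by $G$, with stabilizer $H\leq G$ and $[L_i:K]=|H|$, $r|H|=|G|$. For any $\fp\in S$, the completion $E_a\otimes_K K_\fp\cong L^{(\fp)}$ is a \emph{field} of degree $|G|$ over $K_\fp$; but this completion is also a direct sum of local completions of the $L_i$, each of degree $\leq [L_i:K]=|H|$. Forcing $|H|\geq |G|$ gives $H=G$ and $r=1$, so $L:=L_1$ is a $G$-field extension of $K$ that solves the Grunwald problem and is a specialization of $E/F$. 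I expect the only delicate point to be a clean formulation of the $\fp$-adic local constancy of specializations; everything else is a direct application of weak approximation and \'etale-algebra bookkeeping.
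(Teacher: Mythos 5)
Your approximation machinery (local constancy of the specialization via Krasner/the twisting lemma plus smoothness and Hensel, then weak approximation on $\mathbb{A}^d$ to find a global parameter hitting all prescribed local behaviors) is essentially the paper's argument; the paper additionally takes care to replace each $a^{(\fp)}$ by one of the infinitely many good points on the twist (using that $K_\fp$ is a large field) so that the specialized defining polynomial is separable and Krasner applies — a minor point you should make explicit, but not a real obstruction.

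The genuine gap is your final step. You claim that for $\fp\in S$ the prescribed local extension $L^{(\fp)}/K_\fp$ is a \emph{field} of degree $|G|$, and deduce $H=G$, $r=1$ by comparing degrees. But in this paper a Grunwald problem consists of $G$-extensions of \'etale algebras over $K_\fp$: the underlying field of $L^{(\fp)}$ has Galois group only some subgroup $D_\fp\leq G$, and since $G_{K_\fp}$ is prosolvable (with tame quotients metacyclic), $L^{(\fp)}$ is essentially never a field of degree $|G|$ unless $G$ itself occurs as a local Galois group. So no prime of $S$ forces the global specialization to be a field with full group $G$, and your \'etale-algebra bookkeeping collapses. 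The paper's fix is the missing idea: before approximating, it enlarges the Grunwald problem by adding, at auxiliary primes $\fp_C\notin S\cup T$ (one for each conjugacy class of cyclic subgroups $C\leq G$), an unramified $C$-extension as local condition — possible by the hypothesis on $E/F$ — so that the Galois group of the resulting specialization $E_{{\bf t}_0}/K$ meets every conjugacy class of cyclic subgroups of $G$ and therefore equals $G$ by Jordan's theorem; this simultaneously forces the specialization to be a field. Without such an augmentation (or an equivalent device), your argument only produces a specialization whose completions at $S$ are correct but whose Galois group may be a proper subgroup of $G$, i.e.\ not a solution in the sense of the lemma.
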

\begin{proof}
First, by enlarging $S$,  add to the Grunwald problem  (unramified) $C$-extensions $L^{(\fp_C)}/K_{\fp_C}$ over distinct primes $\fp_C\notin S\cup T$ where $C\leq G$ runs over cyclic subgroups.

Suppose $F=K({\bf t})$, where ${\bf t}=(t_1,\ldots,t_r)$ are indeterminates. 
 Let $P({\bf t},x)$ be a defining polynomial of $E/F$, separable in $x$, and let ${\bf t}\ra {\bf t}_{\fp}\in K_\fp^r$, $\fp\in S\setminus T$ be places at which $E/F$ specializes to prescribed extensions $L^{(\fp)}/K_{\fp}$. 
We claim that there are in fact infinitely many choices for ${\bf t}_p\in K_\fp^r$  with the above property for  $\fp\in S\setminus T$. 
 
As in Section \ref{sec:twists}, letting $f:X\ra Y$ be a $G$-cover over $K$ with function fields $E/F$ and $\psi_\fp:G_{K_\fp}\ra G$ an epimorphism representing $L^{(\fp)}/K_\fp$, the point 
 ${\bf t}_{\fp}$ gives rise to a $K_\fp$-rational point on $\tilde X$, where $f^{\psi_\fp}:\tilde X\ra\mP^1$ is the twist of $f$ by $\psi_\fp$.
Since $K_\fp$ is an ``ample" field, a.k.a.\ ``large" field, every smooth absolutely irreducible variety with a point has infinitely many points. These yield infinitely many possibilities for ${\bf t}_\fp$, as claimed. 
By possibly replacing ${\bf t}_\fp$, we henceforth assume $P({\bf t}_{\fp},x)$ is separable. 

This makes Krasner's lemma applicable, yielding full mod-$\fp^m$ residue classes, for some $m>0$, of values ${\bf t}_{\fp} \in K_\fp^r$ with the same property. 
 By the Chinese remainder theorem, we pick (infinitely many) ${\bf t}_0\in K^r$ in the above residue classes, so that $E_{{\bf t}_0}/K$ has the prescribed completions at all $\fp\in S$. 
 Finally, since we extended our Grunwald problem, the Galois group of $E_{{\bf t}_0}/K$ must intersect every conjugacy class of cyclic subgroups in $G$, and thus equal $G$ by Jordan's theorem. 
\end{proof}
In particular, if there exists a single extension $E/F$, with $F$ purely transcendental of transcendence degree $\ld_K(G)$, that specializes to $G$-extensions of $K_\fp$ for $\fp$ away from a finite set,  then $\hgd_K(G) = \ld_K(G)$. 

Part (2b) of the main theorem now follows by choosing $E/F$ as in part (2a).
\begin{rem}\label{rem:IGP}
Note that it is possible that the equality $\hgd_K(G)=\ld_K(G)$ holds in general, however already $\hgd_K(G)<\infty$ implies a positive answer to the IGP for $G$ over $K$. Indeed, by adding suitable local extensions to a Grunwald problem, one may force its solutions to be fields as in the proof of Lemma \ref{cor:grunwald}.
\end{rem}
\begin{rem}\label{rem:WWA}
The existence of a generic extension is used in the proof of the main theorem  in order to find a $G$-extension $E/K(t,s)$ whose completions at $s\mapsto s_\pi$ are given extensions $E_\pi/F_\pi$, for all ID pairs $\pi\in \Pi$ for $G$ over $K$. 

Embedding $G$ into $SL_n$, and letting $X:= SL_n/G$, it is well known that $SL_n\ra X$ is a versal $G$-torsor\footnote{One may also consider the map $\mathbb A^n\ra\mathbb A^n/G$, and take a smooth open subset of $\mathbb A^n/G$ to form a $G$-torsor.}. 
In this setting, the property required for part (2b) of the main theorem is the following weak approximation property. Namely, the restriction
$$ X(F_0(s)) \ra \prod_{\pi\in \Pi}X(F_\pi), $$
has to have dense image, where $F_0:=K(t)$, and $F_\pi$ is the completion of $F_0(s)$ at a place $s\mapsto s_\pi$. 
This weak approximation property clearly holds if $X$ is rational,
but fails in general in view of Remark \ref{rem:finite}. 
\end{rem}

\subsection{Some strengthenings}\label{sec:ext}
We note some strengthenings of the assertions of the main theorem, which directly follow from the proof above.
In regards to the Hilbert--Grunwald dimension, the proof of Lemma \ref{lem:subgroup}  and part (2b) of the main theorem  give the following for every finite group $G$ and number field $K$:
\begin{cor}\label{cor:hgd}
There exists a constant $d=d_{G}$, a $G$-extension $E/F$ with $F$ of transcendence degree $2$, and a finite set $T$ of primes of $K$ such that every Grunwald problem $L^{(\fp)}/K_\fp$, $\fp\in S\setminus T$, is solvable over an overfield  $K'$ of degree $[K':K]\leq d$ in the following sense:
For every prime $\fp\in S\setminus T$, there is a {prime} $\fp'$ of $K'$ with $K'_{\fp'}\cong K_\fp$, such that the Grunwald problem $L^{(\fp)}/K'_{\fp'}, \fp\in S\setminus T$ is solvable within the set of  $K'$-rational specializations of $E/F$. 
\end{cor}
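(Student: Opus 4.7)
The strategy is to reduce to part~(2b) of the main theorem by embedding $G$ into a larger group admitting a generic extension, mirroring the idea of Lemma~\ref{lem:subgroup}. Take $G^* := S_{|G|}$ with $G$ embedded via the regular representation, and set $d_G := [G^*:G] = (|G|-1)!$, depending only on $G$. Part~(2b) applied to $G^*$ yields a $G^*$-extension $E^*/F^*$ with $F^*$ purely transcendental of transcendence degree~$2$ over $K$, and a finite set $T$ of primes of $K$, such that every $G^*$-Grunwald problem over $K$ away from $T$ is solved by a specialization of $E^*/F^*$ at a $K$-rational place of $F^*$. Set $E := E^*$ and $F := (E^*)^G$; then $E/F$ is a $G$-extension of fields with $F$ of transcendence degree~$2$ over $K$, and $[F:F^*] = d_G$.

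Given a $G$-Grunwald problem $\{L^{(\fp)}/K_\fp\}_{\fp \in S \setminus T}$, induce each local extension to a $G^*$-\'etale algebra $\mathcal{L}^{(\fp)} := \mathrm{Ind}_G^{G^*}(L^{(\fp)})$, and solve the resulting $G^*$-Grunwald problem using $E^*/F^*$. This produces a $G^*$-\'etale algebra $\mathcal{M}/K$ arising as a specialization of $E^*/F^*$ at some $K$-rational place $\mathbf{t}_0$ of $F^*$, with $\mathcal{M} \otimes_K K_\fp \cong \mathcal{L}^{(\fp)}$ for all $\fp \in S \setminus T$. Let $\rho: G_K \to G^*$ be a representing homomorphism for $\mathcal{M}$. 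Since each local restriction $\rho|_{G_{K_\fp}}$ is forced to have image a $G^*$-conjugate of $G$ (by the induced local condition), the global image $H := \mathrm{Im}(\rho)$ satisfies $H \supseteq G$ up to $G^*$-conjugation. Let $N := \overline{K}^{\ker \rho}$ be the underlying field of $\mathcal{M}$ and set $K' := N^G$; then $N/K'$ is a $G$-extension of fields with $[K':K] = [H:G] \le d_G$.

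To exhibit $N/K'$ as a $K'$-rational specialization of $E/F$, factor the specialization of $E^*/F^*$ at $\mathbf{t}_0$ through the tower $E^* \supseteq F \supseteq F^*$: the places of $F$ above $\mathbf{t}_0$ correspond to $G$-orbits on the field components of the $G^*$-\'etale algebra $\mathcal{M}$, and the singleton $G$-orbit containing the component $N$ (whose $G^*$-stabilizer is $H \supseteq G$) yields a place $\omega$ of $F$ with residue field $K' = N^G$ at which $E/F$ specializes to $N/K'$. For the local matching, fix $\fp \in S \setminus T$ and choose a prime $\fq$ of $N$ above $\fp$ with $N_\fq \cong L^{(\fp)}$; such $\fq$ exists because $\rho|_{G_{K_\fp}}$ has image $G$. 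Setting $\fp' := \fq \cap K'$, the decomposition groups in the tower $N/K'/K$ satisfy $D_{\fq/\fp'} = D_{\fq/\fp} \cap \mathrm{Gal}(N/K') = G \cap G = G$, and by transitivity of ramification and residue indices,
\[
e(\fp'/\fp)\, f(\fp'/\fp) = |D_{\fq/\fp}|/|D_{\fq/\fp'}| = 1,
\]
whence $K'_{\fp'} \cong K_\fp$ and $N_{\fp'} = N_\fq \cong L^{(\fp)}$, as required.

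The principal obstacle is the Galois-theoretic bookkeeping in the tower $E \supseteq F \supseteq F^*$: correctly identifying the specialization of $E/F$ at $\omega$ with the $G$-Galois field extension $N/K'$ underlying $\mathcal{M}$ requires a precise correspondence between places of $F$ above $\mathbf{t}_0$ and the decomposition of $\mathcal{M}$ as a $G^*$-\'etale algebra (essentially, that taking $G$-invariants commutes with specialization at places away from residue characteristics dividing $|G|$). Once this identification is in place, the decomposition-group calculation producing $K'_{\fp'} \cong K_\fp$ follows routinely from the tower law, and the bound $[K':K] \le d_G$ is immediate.
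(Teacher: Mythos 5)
Your overall strategy matches the paper's: embed $G$ into a group $G^*$ admitting a generic extension (the paper's $\Gamma$; your choice $S_{|G|}$ via the regular representation is precisely the paper's suggestion), invoke part (2b) of the Main Theorem for $G^*$, set $F := (E^*)^G$, and descend. The bound $d_G = [F:F^*] = [G^*:G]$ also agrees.

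There is, however, a genuine gap at the crucial step where you claim $H := \mathrm{Im}(\rho) \supseteq G$ up to $G^*$-conjugation, \emph{``since each local restriction $\rho|_{G_{K_\fp}}$ is forced to have image a $G^*$-conjugate of $G$.''} This is doubly incorrect. First, the local image at $\fp$ is only a $G^*$-conjugate of $H_\fp := \Gal(L^{(\fp)}/K_\fp) \leq G$, which is in general a proper subgroup of $G$; a Grunwald problem places no constraint forcing $H_\fp = G$ at any prime. Second, even if each local image were a conjugate of $G$, the conjugating elements $\gamma_\fp$ can vary with $\fp$, so nothing forces a \emph{single} $G^*$-conjugate of $G$ to sit inside $H$. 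Without $G \le H$ (up to conjugation), the definition $K' := N^G$ is not even well-posed, and your ``singleton $G$-orbit'' picture for the places of $F$ over $\mathbf{t}_0$ collapses. The same issue resurfaces when you ``choose a prime $\fq$ of $N$ above $\fp$ with $N_\fq \cong L^{(\fp)}$'': you need $D_\fq$ to actually land inside $G$, which again uses that the available $\Gal(N/K)$-conjugates of the decomposition group sweep out enough of $G^*$. (A smaller slip: in the tower computation you write $D_{\fq/\fp} \cap \Gal(N/K') = G \cap G$; it should be $H_\fp \cap G = H_\fp$, though the conclusion $e(\fp'/\fp)f(\fp'/\fp)=1$ survives.)

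The fix — which is exactly what the paper's proof relies on, via Lemma~\ref{cor:grunwald} — is that part (2b) produces a solution which is a \emph{field} extension with Galois group all of $G^*$: the proof of Lemma~\ref{cor:grunwald} enlarges the Grunwald problem by unramified cyclic local conditions over auxiliary primes so that Jordan's theorem forces the global Galois group to be $G^*$. Once you know $\rho$ is surjective, $\mathcal{M} = N$ is a field, $H = G^*$ trivially contains $G$, and $\Gal(N/K) = G^*$ acts transitively on conjugates of each $H_\fp$, giving the required $\fq$ with $D_\fq \subseteq G$. Your proof has access to this fact (you invoke part (2b)) but never uses it, substituting an incorrect justification. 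With that single correction the argument goes through and is essentially the paper's.
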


\begin{proof}
Embedding $G$ into a group $\Gamma$ possessing a generic extension, let  $E/K(t,s)$ be the $\Gamma$-extension provided by part (2b) of the main theorem, and set $F=E^G$. 
As in the proof of Lemma \ref{lem:subgroup}, the property that $E/K(t,s)$ specializes to every $H$-extension $L^{(\fp)}/K_\fp$ for $H\leq G$, implies that there are (distinct) $K_\fp$-rational places $\nu(\fp)$, $\fp\in S\setminus T$ of $K_\fp(t,s)$, and  places $\omega(\fp)$ of $FK_\fp$ of degree $1$ over $\nu(\fp)$, such that the residue of $E/F$ at $\omega(\fp)$ is $L^{(\fp)}/K_\fp$, for every $\fp\in S\setminus T$. As in the proof of Lemma \ref{cor:grunwald}, there exists a $K$-rational place $\nu$ of $K(t,s)$ approximating $\nu(\fp), \fp\in S\setminus T$,  at which  the residue extension $L/K$ of $E/F$ has completion $L^{(\fp)}/K_\fp$, for all $\fp\in S\setminus T$. 
Let  $\omega$  be a place of $F$ over $\nu$ and  $K'$ denote the residue field at $\omega$. Since the decomposition group of $L/K$ at $\fp$ is conjugate to a subgroup of $G$, there is a degree $1$ prime $\fp'$ of $K'$ over $\fp$ such that $K_{\fp'}\cong K_\fp$, for each $\fp\in S\setminus T$. Thus $L/K'$ is a solution to our Grunwald problem. However, now $[K':K]$ 
 is  bounded merely by $d_G:=[F:K(t,s)]$.
\end{proof}

The proof of the main theorem also shows the following stronger conclusion:
\begin{cor}\label{cor:ext}
The $G$-extension $E/F$, constructed in the main theorem,  satisfies the following for each finite extension $K'\supseteq K$:
\begin{enumerate}
\item for each prime $\fp'$ of $K'$ outside of some finite set $T=T(E,K')$, every $G$-extension of $K'_{\fp'}$ occurs as a specialization of $E/F$ over $K'_{\fp'}$;
\item If $G$ admits a generic extension over $K$, every Grunwald problem $L^{(\fp')}/K'_{\fp'}$, $\fp'\in S\setminus T$ has a solution within the set of specializations of $E/F$ over $K'$. 
\end{enumerate}
\end{cor}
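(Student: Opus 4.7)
The plan is to trace through the construction underlying the main theorem and verify that its key ingredients persist under base change from $K$ to $K'$; specialization statement (1) then follows by reapplying Corollary~\ref{cor:2-dim} over $K'$, and statement (2) by reapplying Lemma~\ref{cor:grunwald} over $K'$. The first step is to compare ID pairs over $K$ and $K'$: for every $\pi'=(I,D)\in\Pi_{G,K'}$, the image $V_{\pi'}\leq\Gal(K'(\mu_e)/K')$ of $\eta_{\pi'}$ from Lemma~\ref{lem:id_pairs} is determined intrinsically by the conjugation action of $D$ on $I\cong\mu_e$, so via the restriction $\Gal(K'(\mu_e)/K')\leq\Gal(K(\mu_e)/K)$ it coincides with $V_\pi$, the image associated to the same group-theoretic pair $\pi=(I,D)$ viewed over $K$. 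In particular $\pi\in\Pi_{G,K}$, and a degree count using $\Gal(K'(\mu_e)/K')\cong\Gal(K(\mu_e)/K(\mu_e)\cap K')$ yields $K_0':=K'(\mu_e)^{V_{\pi'}}=K'\cdot K_0$ where $K_0:=K(\mu_e)^{V_\pi}$.

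Assuming first that $G$ admits a generic extension over $K$, let $E/F$ be the extension from part (2a) of the main theorem and $E_\pi/F_\pi$ its completion at $s\mapsto s_\pi$, with residue extension $N/M$ satisfying conditions (a)--(c) of Corollary~\ref{thm:construct-2}. The main task is to verify that the base-changed completion $(E_\pi K')/(F_\pi K')$ has residue extension $NK'/MK'$ satisfying the analogous conditions over $K'$ with respect to $\pi'$: since $M$ is geometrically integral over its constant field $K_0$, the base change $MK'$ has constant field exactly $K_0'=K'\cdot K_0$, yielding (a'); similarly $NK'$ has constant field $K'(\mu_e)$, yielding (b'); and $(NK')^C=(MK')(\mu_e)$ follows by base-changing the Galois-theoretic identity $N^C=M(\mu_e)$. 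Applying Corollary~\ref{cor:2-dim} over $K'$ then shows that $EK'/K'(t,s)$ specializes to every tame $D$-extension of $K'_{\fp'}$ with inertia group $I$ for every $\fp'$ of sufficiently large norm; as every $H$-extension of $K'_{\fp'}$ for $H\leq G$ arises from some ID pair $(I,H)\in\Pi_{G,K'}$, this proves part (1) in the generic case. The general case reduces to this via the subgroup argument of Lemma~\ref{lem:subgroup} applied with overfield $L=K'_{\fp'}$: embedding $G\hookrightarrow\Gamma$ into a group with a generic extension (e.g.\ a symmetric group), every $H$-extension of $K'_{\fp'}$ with $H\leq G$ appears as the residue of $(E_\Gamma K'_{\fp'})/(F_\Gamma K'_{\fp'})$ at some $K'_{\fp'}$-rational place, and Lemma~\ref{lem:subgroup} produces a degree-one place on $E_\Gamma^G K'_{\fp'}$ realizing it as a specialization of $E/F=E_\Gamma/E_\Gamma^G$. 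Finally, for part (2), $F$ is purely transcendental over $K$ hence over $K'$, so Lemma~\ref{cor:grunwald} over $K'$ combined with part (1) yields the Grunwald property within specializations of $E/F$ over $K'$.

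The main obstacle is the verification of the base-changed conditions (a'), (b'), (c'); specifically, one must ensure that constant fields do not unexpectedly enlarge under $K\rightsquigarrow K'$. This relies crucially on the geometric integrality of $M$ over $K_0$ and of $N$ over $K(\mu_e)$ intrinsic to the construction in Corollary~\ref{thm:construct-2}, together with the compatibility $K_0'=K'\cdot K_0$ arising from the identification $V_{\pi'}=V_\pi$ established in the first step.
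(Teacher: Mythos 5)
Your proposal follows essentially the same route as the paper: identify the ID pairs over $K'$ with ID pairs over $K$ via compatibility of the maps $\eta_\pi$ under restriction, check that the base-changed completion still satisfies conditions (a)--(c) of Corollary \ref{thm:construct-2} over $K'$, and then rerun the proof of the main theorem (Corollary \ref{cor:2-dim}, Lemma \ref{lem:subgroup}, Lemma \ref{cor:grunwald}) over $K'$ for $EK'/FK'$. Your verification of (a$'$)--(c$'$) and of $K'(\mu_e)^{V_{\pi'}}=K'\cdot K(\mu_e)^{V_\pi}$ is correct and in fact more detailed than the paper, which asserts that step without elaboration.

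The one point you assert rather than prove is the inference ``in particular $\pi\in\Pi_{G,K}$''. Knowing that $\eta_{\pi'}$ composed with the restriction $\Gal(K'(\mu_e)/K')\ra\Gal(K(\mu_e)/K(\mu_e)\cap K')\le\Gal(K(\mu_e)/K)$ yields a homomorphism $D\ra\Gal(K(\mu_e)/K)$ under which $I\cong\mu_e$ as $D$-modules does not by itself show that $(I,D)$ is an ID pair over $K$: by definition one must exhibit a prime $\fp$ of $K$ carrying a tame $D$-extension of $K_\fp$ with inertia group $I$. The paper supplies exactly this, by Chebotarev (choose $\fp$ of norm $q$ with $\langle\sigma_{q,e}\rangle$ equal to the image of that composed map) together with the ``if'' direction of Lemma \ref{lem:id_pairs}(2), i.e.\ Iwasawa's description of $G_{\fq}^{tr}$. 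This containment $\Pi_{G,K'}\subseteq\Pi_{G,K}$ is essential to your argument, since $E/F$ only has the constructed completions $E_\pi/F_\pi$ at places indexed by $\Pi_{G,K}$; once it is argued (it is an easy consequence of the cited lemma), the rest of your proof matches the paper's.
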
 
\begin{proof}
We first claim that every ID pair $\pi=(I,D)$ for $G$ over $K'$ is also an ID pair for $G$ over $K$, by  Lemma \ref{lem:id_pairs}:
 Observe that the composition of the map $\eta'_\pi:D\ra\Gal(K'(\mu_e)/K')$ with the restriction $\Gal(K'(\mu_e)/K')\ra \Gal(K(\mu_e)/K'\cap K(\mu_e))$ is $\eta_\pi:D\ra \Gal(K(\mu_e)/K)$. By Chebotarev's theorem, there exist (infinitely many) primes $\fp$ of $K$ of norm $q$ such that $\langle\sigma_{q,e}\rangle=\Im(\eta_\pi)$. Thus, the ``if" direction of Lemma \ref{lem:id_pairs}.(2) implies that $(I,D)$ is an ID pair over $K$, as claimed. 
 
 Now the base change $EK'/FK'$ still has a completion $E_\pi/F_\pi$ satisfying the conditions of Corollary \ref{thm:construct-2}, for every ID pair $\pi$ for $G$ over $K'$. Furthermore, it is purely transcendental over $K'$ if $G$ has a generic extension over $K$. Thus the proof of the main theorem applies over $K'$ for the extension $EK'/FK'$.
 \end{proof}

 Finally note that the notion of ``arithmetic dimension" \cite{ONeil} is similar to parametric dimension but allows ``finite base change". That is, it measures the number of parameters required to parametrize all \'etale algebra $G$-extensions of $K'$, where $K'$ runs over all finite extensions of $K$. The above corollary suggests that the ``local" version of arithmetic dimension is then also $2$. Thus in similarity to Remark \ref{rem:LGP} the study of arithmetic dimension in this context also concerns the extent to which local global principles fail.

\subsection{Allowing finitely many parametrizing extensions}\label{sec:finitely}
Let {$K$ be a number field} and $\hat T$ the union of the infinite places of $K$ with the finitely many places corresponding to primes in the set $T$ from the main theorem. 
As there are only finitely many Galois extensions of completions at $\nu\in \hat T$ with Galois group a subgroup of $G$, we may cover these extensions by adding to $E/F$ finitely many extensions:
\begin{cor}\label{cor:many-lifts}
There exist finitely many $G$-extensions $E_i/F_i$, $i=1,\ldots,r$, over fields $F_i$ of transcendence degree $2$ over $K$, such that every $H$-extension of a completion of $K$, for every $H\leq G$, is a specialization of some $E_i/F_i$. 
\end{cor}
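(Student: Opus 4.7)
The $G$-extension $E/F$ of transcendence degree~$2$ from part~(1) of the Main Theorem, together with \Lref{lem:subgroup}, already specializes to every $H$-extension of $K_\fp$ for every $H\le G$ and every prime $\fp\notin T$. The plan is to enlarge this single extension by finitely many further $G$-extensions so as to also cover the places in $\hat T := T \cup \{\text{archimedean places of }K\}$.

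Since $\hat T$ is finite and each $K_\nu$ with $\nu\in\hat T$ is a local field admitting only finitely many Galois extensions of degree at most $|G|$, the set
\[
\mathcal L := \{(L/K_\nu, H)\suchthat \nu\in\hat T,\ H\le G,\ L/K_\nu \text{ is an } H\text{-Galois field extension}\}
\]
is finite. It suffices to produce, for each pair $(L/K_\nu, H)\in\mathcal L$, a $G$-extension $E_L/F_L$ with $F_L$ of transcendence degree~$2$ over $K$ whose specialization over $K_\nu$ realizes $L/K_\nu$ in the sense of \S\ref{sec:etale}.

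The construction of each $E_L/F_L$ proceeds by globalizing: fix a Galois extension $M/K$ of number fields with group $H$ admitting a prime $\nu'\mid\nu$ at which the decomposition group is all of $H$ and whose completion is $H$-equivariantly isomorphic to $L/K_\nu$. Set $F_L := K(t_1, t_2)$ and let $E_L$ be the $G$-Galois étale algebra induced, via $\Ind_H^G$, from the $H$-Galois field extension $M(t_1, t_2) := M\otimes_K K(t_1, t_2)$ over $F_L$. Specializing $E_L/F_L$ at any $K_\nu$-rational place of $F_L\otimes_K K_\nu$---for instance, at a $K$-rational point of $\mathbb A^2_K$---yields the $G$-Galois étale algebra $\Ind_H^G(M\otimes_K K_\nu)$, whose $H$-field component at $\nu'$ is precisely $L/K_\nu$. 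The finite collection $\{E/F\}\cup\{E_L/F_L : (L,H)\in\mathcal L\}$ is the sought-for list.

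The main obstacle is the global realization step: producing $M/K$ Galois with group $H$ and prescribed local completion at the single prime $\nu$. For solvable $H$ this is classical (\cite[Theorem 9.5.9]{NSW}), and for general $H\le G$ it follows from the solvability of one-place Grunwald problems, which rests on the Inverse Galois Problem for $H$ over $K$ (available for all subgroups of a symmetric group via Hilbert irreducibility) combined with Krasner approximation to pin down the local completion at $\nu$. If one prefers to sidestep the global realization entirely, an alternative is to apply the function-field constructions of Section~\ref{sec:specialize} directly over $K_\nu$ and descend to $K$ by a weak approximation argument.
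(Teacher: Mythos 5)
Your overall strategy matches the paper's: cover the primes $\fp\notin T$ by the Main Theorem's $E/F$ and Lemma~\ref{lem:subgroup}, then add finitely many further extensions to handle the finitely many local Galois extensions at $\nu\in\hat T$. But the way you propose to produce the extra extensions has a genuine gap in the global realization step, and the paper's proof sidesteps precisely this issue.

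You claim that one can find a Galois field extension $M/K$ with group $H$ and prescribed completion at $\nu$, because the inverse Galois problem is ``available for all subgroups of a symmetric group via Hilbert irreducibility.'' This is not so: Hilbert irreducibility realizes $S_n$ (and some specific groups) as a Galois group over $K$, but it does not realize an arbitrary subgroup $H\leq S_n$ as a Galois group over $K$. In fact, IGP for an arbitrary finite group $H$ over an arbitrary number field $K$ is wide open, and so is the single-place Grunwald problem for such $H$. Your proposed fix via Krasner approximation only helps once you already have some $H$-extension of $K$ that is close $\nu$-adically to what you want; it does not produce one. So the construction of $M/K$ does not go through in general.

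The paper avoids this entirely by \emph{not} trying to realize $H$ (or even $G$) over $K$. Instead it embeds $G\leq S_n$ and uses the generic extension for $S_n$ (via Saltman) to produce an $S_n$-extension $L/K$ whose completion at $\nu$ has underlying field $L^{(\nu)}$; then it passes to the subfield $K':=L^G$, so that $L/K'$ is a genuine field $G$-extension of a \emph{finite extension} $K'$ of $K$, with the right local completion at a degree-$1$ prime $\omega\mid\nu$. Taking $L(t,s)/K'(t,s)$ gives the required field $G$-extension with $F_i=K'(t,s)$ of transcendence degree~$2$ over~$K$. Two things are bought here that your proposal misses: one only needs IGP-with-local-conditions for $S_n$ (which holds by the generic extension hypothesis), not for $H$; and the resulting $E_i/F_i$ are \emph{field} $G$-extensions, consistent with the paper's standing convention (\S\ref{sec:etale}), rather than the induced étale algebras $\Ind_H^G M(t_1,t_2)$ you form, which are not fields once $H\neq G$.
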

\begin{proof}
We claim that every field $H$-extension of a completion at $\nu\in \hat T$, for every $H\leq G$, is the completion of some $G$-extension $L/K'$, where $K'/K$ is finite.  The resulting list of extensions $E_i/F_i$ then consists of the extension $E/F$ of the main theorem, and finitely many extensions $L(t,s)/K'(t,s)$ coming from the finitely many extensions $L/K'$ given by the claim.

To see the claim,  fix an $H$-extension $L^{(\nu)}$ of the completion of $K$ at $\nu$, for $\nu\in \hat T$ and $H\leq G$. Embed $G\leq S_n$. Since $S_n$ has a generic extension over $K$, {as in the proof of Lemma \ref{lem:subgroup}}, there exists an $S_n$-extension $L$ of $K$ with completion $L^{(\nu)}$ at $\nu$. 
Set $K'=L^G$. 
Since the decomposition group of $L/K$ at $\nu$ is (conjugate to) a subgroup of $G$, there exists a degree $1$ prime $\omega$ of $K'$ over $\nu$. The completion of $L/K'$ at $\omega$ is isomorphic to the completion of $L/K$ at $\nu$, so that $L/K'$ is the desired extension for the claim. 
\end{proof}
It is conjectured that every $H$-extension of a number field $K$ is a specialization of a $K$-regular $H$-extension of $K(t)$, a property also known as the Beckmann--Black lifting property, cf.\ \cite{Deb-BB}.   In view of this conjecture, it seems likely that the extensions $E_i/F_i$, constructed in Corollary 
\ref{cor:many-lifts}, can be replaced by $K$-regular extensions {with purely transcendental base field:}

\begin{rem}\label{rem:prediction}
{One remaining open question is whether the extensions $E_i/F_i$ as in Corollary \ref{cor:many-lifts} may in fact be chosen {$K$-regular} with $F_i = K(t,s)$ purely transcendental (for all $i$). Of course, for groups $G$ possessing a generic extension over $K$, this is already achieved by the Main Theorem. For general groups $G$,}
it suffices to construct, for each ID pair  $\pi=(I,D)$ of $G$ over $K$, a $G$-extension of $K(t,s)$ which specializes to all local $D$-extensions with inertia group $I$ over all but finitely many primes. As the weak approximation property in Remark \ref{rem:WWA} indicates, to do so it suffices to show $E_\pi/F_\pi$ is the completion at $s\mapsto s_\pi$ of some {$K$-regular} $G$-extension $E(\pi)/K(t,s)$. 

{Without the $K$-regularity requirement and upon allowing 
 $E(\pi)/K(t,s)$ to be an extension of \'etale algebras},  this can often be {achieved even for groups which do not necessarily possess a generic extension, as the following example demonstrates.}
\end{rem}
\begin{exam}
Let $G$ be an $\ell$-group, and $K$ a number field containing the $|G|$-th roots of unity. 
Let $\pi=(I,D)$ be an ID pair for $G$ over $K$, and $E_\pi/F_\pi$ the constructed completion of $K(t,s)$ at $s\mapsto s_\pi$ in Corollary \ref{thm:construct-2}. Then there exists a $D$-extension of $K(t,s)$ with completion $E_\pi/F_\pi$ at $s\mapsto s_\pi$, inducing the desired $G$-extension of \'etale algebras $E(\pi)/K(t,s)$.

Indeed, when $G$ is a $\ell$-group and $K$ contains the $|G|$-th roots of unity, Remark \ref{rem:p-power} shows that $s\mapsto s_\pi$ can be chosen $K$-rational {and finite}.
Thus, for an ID-pair $(I,D)$, the field  $E_\pi$ is of the form $M((s-s_\pi))(\sqrt[e]{\alpha(s-s_\pi)})$, where $e=|I|$, $M/K(t)$ is a $(D/I)$-extension, and $\alpha\in M$. In this case $M(s,\sqrt[e]{\alpha(s-s_\pi)})/K(t,s)$ is a $D$-extension   whose completion at $s\mapsto s_\pi$ is $E_\pi/F_\pi$.
\end{exam}

\appendix

\section{Parametric dimension {and related notions}}\label{sec:parametric}
The following example was noticed in \cite[Theorem 3.4]{ONeil}. It shows that $\pd_\mQ(G)$ may be strictly smaller than $\ed_\mQ(G)$,  based on the Hasse--Minkowski theorem: 
\begin{exam}
Let $G=(\Z/2)^5$. It is well known that $\ed_\mQ(G)=5$. We claim that $\pd_\mQ(G)\leq 4$. Namely, we consider the extensions\footnote{The number of extensions is clearly not minimal, and it is interesting to see if it is possible to parametrize all $G$-extensions using a single extension of transcendence degree {$4$}.} $E_{i,\eps}:=\mQ(\sqrt{t_1},\ldots,\sqrt{t_4},\sqrt{\eps(t_1+\cdots+t_i)})$, $i\leq 4$, $\eps\in \{1,-1\}$ of $F:=\mQ(t_1,\ldots,t_4)$, and claim that every $G$-extension $L/\mQ$ is a specialization of at least one of these extensions. 
It is well known that $L$ is of the form $\mQ(\sqrt{a_1},\ldots,\sqrt{a_5})$ for some $a_i\in \mQ^\times$, $i=1,\ldots,4$. (Note that $a_i=0$ may be replaced by $a_i=1$ without changing the extension).   

We consider two cases according to whether all $a_i$'s are positive or at least one of them is not. In the former case set $\eps=+1$, and in the latter $\eps=-1$. 
By the Hasse--Minkowski theorem, the quadratic form $Q_{\eps}(X_1,\ldots,X_5):=a_1X_1^2+\cdots +a_4X_4^2-\eps a_5X_5^2$ has a nontrivial solution $(x_1,\ldots,x_5)$. By possibly reordering the $a_i$'s, we may assume $x_5$ and $x_1,\ldots,x_i$  are nontrivial for some  $1\leq i\leq 4$. Then the specialization of $E_{i,-\eps}$, at $t_j\mapsto a_jx_j^2$ for $j\leq i$, and $t_j\mapsto a_j$ for $i<j\leq 4$,  is 
$\mQ(\sqrt{a_1},\ldots,\sqrt{a_4},\sqrt{-\eps\sum_{j=1}^ia_jx_j^2})$. The last field is $L$ since $-\eps\sum_{j=1}^i a_jx_j^2\in \mQ^\times$ and $a_5\in \mQ^\times$ are equivalent mod $(\mQ^\times)^2$ by our choice of $x_1,\ldots,x_i$ and $x_5$. 
\end{exam}  
Allowing finitely many extensions in the definition of parametric dimension rather than only one seems natural in analogy to the fact that the finite union of $d$-dimensional varieties is still a $d$-dimensional object. 
The following example indicates that allowing more than one extension (but still a finite number) 
 might\footnote{The example considers extensions of rational function fields. Since we do not require rationality in the definition of $\pd_K(G)$, it remains to be seen whether there is an example of a number field $K$ and group $G$ such that  $\pd_K(G)$ would increase upon demanding $r=1$ in the definition.} be necessary 
over number fields. 
\begin{exam}\label{exam:finite}
Set $G=\mathbb Z/8$, and $K=\mQ(\sqrt{17})$. Then there exist two primes $\fp_1,\fp_2$ of $K$ and Galois extensions $L^{(\fp_i)}/K_{\fp_i}$ with groups $H_i\leq G$, such that there is no $G$-extension $E/K(t_1,\ldots,t_r)$, for any $r\geq 0$, that specializes to both $L^{(\fp_i)}/K_{\fp_i}$, $i=1,2$. On the other hand,  it is known \cite{MS} that there exist finitely many polynomials over $K(t_1,\ldots,t_5)$ which parametrize all $G$-extensions.

Indeed, in a joint work with D.~Krashen\footnote{This example can be viewed as  an explicit version of  \cite[Proposition A.3]{CT2}.}, 
we construct   Galois extensions $L^{(\fp_i)}/K_{\fp_i}$ over two distinct (even) primes $\fp_1,\fp_2$ of $K$ with groups $H_i\leq G$ (resp.\ $G$-extensions $L_i/K$), $i=1,2$ such that there is no $G$-extension $E/K(t)$ that specializes to both $L^{(\fp_i)}/K_{\fp_i}$ (resp.\ $L_i/K$), $i=1,2$.
If there was a single extension $E/K({\bf t})$, ${\bf t}=(t_1,\ldots,t_r)$ which specializes at ${\bf t}\mapsto {\bf t}_i$ to  $L^{(\fp_i)}/K_{\fp_i}$, $i=1,2$ (resp.~to every extension $L/K$) with Galois group a subgroup of $G$, then by finding a line $t_i=t_i(t)$, $i=1,\ldots,r$ passing through ${\bf t}_1$ and ${\bf t}_2$, we would obtain an extension $E_1/K(t)$ that specializes to $L^{(\fp_i)}/K_{\fp_i}$ (resp.~$L_i/K$), $i=1,2$, obtaining a contradiction. 
\end{exam}
Note that the above example, as well as the following remark,  can be interpreted in terms of rational connectedness on the quotient {$X:=SL_n/G$} for $G\leq S_n$, {cf.\ Remrark \ref{rem:WWA},} or more precisely $R$-equivalence on $X$ over $K$: 
The $K$-rational points on $X$ which are unramified under the projection $SL_n\ra X$ correspond to $G$-extensions of $K$, while  a $K$-rational curve $\mathbb P^1\subseteq X$ corresponds to a $G$-extension of $K(t)$. Thus, two $G$-extensions of $K$ which are not specializations of a $G$-extension of $K(t)$ correspond to two $K$-rational points on $X$ which cannot be connected by a rational curve $\mP^1$ over $K$, in similarity to the notion of $R$-equivalence or rational connectedness.

\begin{rem}\label{rem:finite}
{While we know that function field extensions of transcendence degree $1$ in general do not suffice to parameterize all $G$-extensions of a given number field $K$ (even when allowing finitely many function fields), it is still possible, in particular in view of \cite[\S 10]{CT3} and \cite[\S 2.4]{Wit},  that there is a uniform bound} $b=b_{G,K}$, depending only {on} the group $G$ and the number field $K$, satisfying the following {two assertions. (1) For every
finitely many Galois extensions $L_i/K$, $i=1,\dots, r$ with Galois group a subgroup of $G$, there exist Galois extensions $E_1/K(t), \dots, E_b/K(t)$ with Galois group $G$ such that each of the extensions $L_i/K$ is a specialization of some $E_j/K(t)$. (2) In the same way, for every
finitely many primes $\fp_i$, $i=1,\ldots,r$, and Galois extensions $L^{(\fp_i)}/K_{\fp_i}$, $i=1,\ldots,r$  with group a subgroup of $G$, there exist Galois extensions $E_1/K(t), \dots, E_b/K(t)$ of group $G$ (with the extensions themselves, but not their number, possibly depending on the chosen primes {and extensions $L^{(\fp_i)}/K_{\fp_i}$}) such that each of the extensions $L^{(\fp_i)}/K_{\fp_i}$ 
is a specialization of some {$E_jK_{\fp_i}/K_{\fp_i}$} for $i=1,\ldots,r$. These properties, if fulfilled, would constitute a generalization of the conjectured Beckmann-Black property, {cf.\ the discussion before Remark \ref{rem:prediction}.}} 

\end{rem}

Finally, the following remark {connects the question whether $\pd_K(G)$ is bounded from above by $2$ with certain local global principles; more precisely, $\pd_K(G)\leq 2$ whenever the principles hold.}  First note that we expect the following ``morphisms" version of part (1) of the main theorem 
also holds for every finite group $G$ and number field $K$:
There exists a a morphism $\varphi:G_F\ra G$, where  $F$ is of transcendence degree $2$ over $K$, such that the  set of specializations $\varphi_{t_0}:G_K\ra G$ of $\varphi$ surjects onto $\Hom(G_{K_\fp},G)$ under the restriction map, for every prime $\fp$ away from a finite set $T$. 
\begin{rem}\label{rem:LGP}
Let $E/F$ and $T$ be as in the main theorem. In the setting of Section \ref{sec:twists}, the constructed extension $E/F$ in the main theorem  can be replaced by a dominant Galois morphism   $f:X\ra Y$ of smooth irreducible $2$-dimensional varieties defined over $K$ with Galois group $G$. 
Let $\varphi: \pi_1(Y') \to G$ be an epimorphism representing $f$. 
For $\psi:G_K\ra G$, denote by $\psi_\fp:G_{K_\fp}\ra G$ its restriction to $G_{K_\fp}$, and consider the twists $f^\psi:\tilde X^\psi\ra Y$ and $f^{\psi_{\fp}}:\tilde X^{\psi_\fp}\ra Y$ (which is the base change of $f^\psi$ to $K_\fp$).

We first claim that the local global principles:
$$\prod_{\fp\notin T}\tilde X^\psi(K_\fp)\neq \emptyset  \Rightarrow \tilde X^\psi(K)\neq\emptyset,$$ 
for all $\psi:G_K\ra G$ imply that every $G$-extension of $K$ is a specialization of $E/F$.
 
Indeed, letting $\psi:G_K\ra G$ be a morphism corresponding to a $G$-extension  $L/K$,  the above version of the main theorem implies that the variety $\tilde X^\psi$ has a $K_\fp$-rational point for  all $\fp\notin T$. 
Thus, the above local global principle implies that $\tilde X$ has a $K$-rational point, and hence by the main property of $f^\psi$, that $\psi$ is a specialization of $f$. This in turn implies that $L/K$ is a specialization of $E/F$, as claimed.

Moreover, when choosing $\psi$ whose completions $\psi_\nu$  are known to appear as specializations of $\varphi$, where $\nu$ runs through places in $T$ and infinite places, then $X^\psi$ has a rational point over the completion at $\nu$. The above local global principle then {trivially} takes the form:
$$\prod_{\nu}\tilde X^\psi(K_\nu)\neq \emptyset  \Rightarrow \tilde X^\psi(K)\neq\emptyset,$$ 
 where $\nu$ runs over all places of $K$.

Therefore, in similarity to \cite{KL2}, the expectation that $\pd_K(G)>2$ for many groups $G$ therefore implies a failure of the local global principle. 
\end{rem}

\section{Groups of local dimension $1$} \label{sec:1-dim}

The following result, {which follows largely from \cite{KLN},} shows that the upper bound $\ld_K(G)\leq 2$ is sharp {for many groups $G$:}
\begin{thm}
\label{thm:locdim_not1}
Let {$K$ be a number field} and $G$ be a finite group containing at least one noncyclic abelian subgroup. Then $G$ has local dimension at least $2$ over $K$.
\end{thm}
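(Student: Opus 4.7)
The plan is to argue by contradiction. By \Lref{lem:subgroup}, which gives $\ld_K(H)\le\ld_K(G)$ for any subgroup $H\le G$, combined with the observation that every noncyclic abelian group contains a subgroup isomorphic to $(\Z/\ell)^2$ for some prime $\ell$, it suffices to treat the case $G=A=(\Z/\ell)^2$. Suppose then that finitely many $A$-extensions $E_i/F_i$, with each $F_i$ of transcendence degree $1$ over $K$, specialize to every $A$-extension of $K_\fp$ for all $\fp$ outside a finite set; each $E_i/F_i$ corresponds to an $A$-Galois cover $X_i\to Y_i$ of smooth curves over the constant field $K_i$ of $F_i$. I will fix a subgroup $B\le A$ of order $\ell$ and exhibit infinitely many primes $\fp$ at which some $A$-field-extension of $K_\fp$ with inertia group $B$---which exists by Kummer theory as soon as $\mu_\ell\subseteq K_\fp$---fails to arise as a specialization of any $E_i/F_i$.

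The main input will be \Tref{thm:kln}. Since the characteristic is zero, the geometric inertia group $I_{1,k}$ at every branch point $t_{1,k}$ of each cover $X_i\to Y_i$ is cyclic, so a specialization whose completion at $\fp$ has inertia $B$ must come from a specialization value meeting some branch $t_{1,k}$ with $I_{1,k}=B$ (conjugacy being trivial in the abelian $A$). For each such $(i,k)$, the residue extension $N_{1,k}/K_i(t_{1,k})$ is either trivial or a cyclic $\ell$-extension (its Galois group embedding into $A/B\cong\Z/\ell$). Let $M/K$ denote the Galois closure over $K$ of the compositum of $K(\mu_\ell)$, of each $K_i$, and of each such $N_{1,k}$; this is a finite Galois extension of $K$.

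By Chebotarev's density theorem, infinitely many primes $\fp$ of $K$ split completely in $M/K$. For any such $\fp$ one has $\mu_\ell\subseteq K_\fp$, $K_i\hookrightarrow K_\fp$ for every $i$, and for each $(i,k)$ as above, any degree-$1$ prime $\fp'_k$ of $K_i(t_{1,k})$ above $\fp$ splits completely in $N_{1,k}$, so $N_{1,k}\cdot K_\fp=K_\fp$. Applying \Tref{thm:kln}(1), the decomposition group at $\fp$ of any specialization through such a branch $t_{1,k}$ equals $\varphi^{-1}(D_{t_{1,k},\fp'_k})$, which collapses to $I_{1,k}=B$ because the residue decomposition $D_{t_{1,k},\fp'_k}$ is trivial. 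Hence the completion of every such specialization is an $A$-\'etale algebra induced from a $B$-field-extension, and in particular is not an $A$-field-extension. Combined with the existence of $A$-field-extensions of $K_\fp$ with inertia $B$, this produces the desired contradiction at infinitely many primes.

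The main technical subtlety to handle will be the use of \Tref{thm:kln} for covers of an arbitrary smooth curve $Y_i$ over a constant field $K_i\supseteq K$, rather than for the case $Y=\mathbb P^1_K$ in which that theorem is stated. I expect this to reduce to the stated case by base change to $K_\fp$ (into which $K_i$ embeds once $\fp$ splits in $K_i/K$), since the local inertia/decomposition picture at a branch point depends only on the restriction of the cover to an \'etale neighborhood of that branch, where $Y_i$ looks analytically like a disk, and the same Puiseux/integrality arguments underlying \Tref{thm:kln} apply verbatim.
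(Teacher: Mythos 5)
Your overall mechanism is the same one the paper uses: by Chebotarev, work at primes $\fp$ splitting completely in a finite extension built from $\mu_\ell$, the constant fields and the branch-point residue data, so that every specialization of the given covers becomes cyclic over $K_\fp$ while $K_\fp$ still admits a $(\Z/\ell)^2$-extension. In the paper this cyclicity statement is exactly \Pref{prop:locdim_not1}, quoted as a black box from \cite{KLN}; you are in effect re-deriving it from \Tref{thm:kln}. The genuine gap is the step you flag yourself at the end: \Tref{thm:kln} (and the underlying specialization inertia theorem of \cite{Beck,Leg19}, which is what you actually need for the claim that a ramified specialization must come from a point meeting a branch point) is stated only for extensions of $K(t)$, i.e.\ covers of $\mathbb{P}^1$ over the fraction field of a Dedekind domain, with bad-prime sets defined through a model in $R[t]$. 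Extending it to an $A$-cover $X_i\to Y_i$ of an arbitrary curve with constant field $K_i$ requires integral models of $Y_i$, a notion of ``meeting at $\fp$'' and exceptional prime sets on $Y_i$, and uniformity in $\fp$; ``base change to $K_\fp$ plus the curve looks like a disk near a branch point'' does not supply this, because the statement you need is global on $Y_i$ (every $K_\fp$-point away from the branch divisor mod $\fp$ gives an unramified, hence cyclic, specialization), not a statement in an analytic neighborhood of a branch point. The paper sidesteps this entirely: since $F$ has transcendence degree $1$ it is a finite extension of $K(t)$, one passes to the Galois closure $\Omega/K(t)$ of $E/K(t)$ (with group inside a wreath product), and every specialization of $EK_\fq/FK_\fq$ at a $K_\fq$-rational place is a subextension of a specialization of $\Omega K_\fq/K_\fq(t)$; so the cyclicity statement over the rational base suffices. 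You should either import that reduction or genuinely prove the curve version of \Tref{thm:kln} — as written, this is the missing piece.

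Two smaller points. First, your field $M$ only contains the residue extensions at branch points with inertia exactly $B$; but if the target extension $T_\fp$ occurs as a specialization, the inertia subgroup of that specialization, viewed inside $A$ via the decomposition group of $E_i/F_i$, can be any order-$\ell$ subgroup $B'\le A$ (an isomorphism $T_\fp\cong E_{t_0}$ over $K_\fp$ need not carry your chosen $B$ to $B'$). The fix is to put the residue extensions at \emph{all} branch points into $M$; then at a completely split prime every specialization is either unramified (hence cyclic) or has decomposition group equal to the cyclic inertia group of the branch point it meets, so no specialization is a $(\Z/\ell)^2$-field extension — which is precisely the conclusion of \Pref{prop:locdim_not1}. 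Second, \Tref{thm:kln}(1) assumes $v_{\fp'}(t_0-t_1)$ coprime to $|I_{t_1}|$ and does not assert that ramification of a specialization forces meeting a branch point; both points require the inertia specialization theorem as used in \cite{KLN}. The case $\ell\mid v_{\fp'}(t_0-t_1)$ is harmless (the specialization is then unramified), but this case distinction, and the citation, should be made explicit. The reduction to $A=(\Z/\ell)^2$ via \Lref{lem:subgroup} is fine, though unnecessary in the paper's formulation.
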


{The proof of Theorem \ref{thm:locdim_not1} rests on the following result, which is shown in \cite{KLN}.} 
\begin{prop}
\label{prop:locdim_not1}
Let $K$ be a number field, $G$ a finite group, and $E_i/K(t)$, $i=1,\dots, r$ finitely many Galois extensions with group $G$. Then there exists a finite extension $L/K$ such that, for all primes $\fq$ of $K$ which split completely in $L$, for all $i\in \{1,\dots, r\}$, and for all non-branch points $t_0\in \mathbb{P}^1(K_\fq)$ of $L$, the specialization $(E_i\cdot K_\fq)_{t_0}/K_\fq$ is cyclic. In particular, if $G$ contains a minimal noncyclic abelian subgroup $A\cong C_p\times C_p$ (for some prime $p$), then for all primes $\fq$ of $K$ which split completely in $L(\zeta_p)$, the field $K_\fq$ possesses an $A$-extension, whereas no specialization over $K_\fq$ of any $E_i/K(t)$ has group $A$. 
\end{prop}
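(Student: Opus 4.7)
The plan is to choose $L$ so that splitting completely in $L$ forces: (i) the prime $\fq$ is coprime to $|G|$ and avoids all ``bad'' finite sets from Theorem \ref{thm:kln}; (ii) every branch point of every $E_i$ becomes rational over $K_\fq$; and (iii) the Frobenius at $\fq$ acts trivially on the residue extension at each branch point. For each $i \in \{1,\dots,r\}$ and each branch point $t_1$ of $E_i/K(t)$, let $\widetilde{N}_{i,t_1}/K$ denote the Galois closure of the residue extension $(E_i(t_1))_{t_1}/K(t_1)$ over $K$, and set $L_0$ equal to the compositum of all such $\widetilde{N}_{i,t_1}$ (which in particular contains $K(t_1)$ for every branch point $t_1$). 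Let $\mathcal{S} := \bigcup_i \mathcal{S}_0(E_i/K(t))$, which is a finite set of primes of $K$. For each $\fp \in \mathcal{S}$, choose an auxiliary finite extension $L_\fp/K$ (e.g.\ a quadratic extension whose relevant Frobenius is nontrivial) in which $\fp$ does not split completely. Define $L := L_0 \cdot \prod_{\fp \in \mathcal{S}} L_\fp$. Then every prime $\fq$ of $K$ splitting completely in $L$ lies outside of $\mathcal{S}$, splits completely in every $K(t_1)$, and splits completely in every $\widetilde{N}_{i,t_1}$.

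\textbf{Cyclicity of the specialization.} Fix such a $\fq$, an index $i$, and a non-branch point $t_0 \in \mathbb{P}^1(K_\fq)$ of $E_i/K(t)$. I will distinguish two cases. If $t_0$ does not meet any branch point of $E_i$ at $\fq$, then the cover is \'etale above $t_0$ mod $\fq$, so $(E_i \cdot K_\fq)_{t_0}/K_\fq$ is unramified, and hence has cyclic Galois group (generated by Frobenius). Otherwise, there exists a branch point $t_1$ of $E_i$ and a prime $\fp'$ of $K_\fq(t_1)$ above $\fq$ with $v_{\fp'}(t_0 - t_1) > 0$; since $\fq$ splits completely in $K(t_1)$, the prime $\fp'$ has residue degree $1$, and since $\fq$ splits completely in $\widetilde{N}_{i,t_1}$, the associated decomposition group $D_{t_1,\fp'}$ of the residue extension is trivial. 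Applying Theorem \ref{thm:kln} (in the general form referenced in the footnote, which removes the coprimality hypothesis on $v_{\fp'}(t_0-t_1)$ and $|I_{t_1}|$), the decomposition group $D_{t_0,\fq}$ of the specialization is conjugate to $\varphi^{-1}(D_{t_1,\fp'}) = I_{t_1}$, which is cyclic as the tame inertia group at $t_1$ (note that $\fq \notin \mathcal{S}_0(E_i/K(t))$ ensures $\fq$ is coprime to $|G|$, so we are in the tame situation). In either case the Galois group of $(E_i\cdot K_\fq)_{t_0}/K_\fq$ is cyclic.

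\textbf{Deducing the second statement.} Suppose now $G$ contains $A \cong C_p \times C_p$ as a minimal noncyclic abelian subgroup and $\fq$ splits completely in $L(\zeta_p)$. Then $\zeta_p \in K_\fq$, so Kummer theory produces an $A$-extension of $K_\fq$ (the $\mathbb{F}_p$-dimension of $K_\fq^\times/(K_\fq^\times)^p$ is at least $2$ once $\mu_p \subseteq K_\fq$). On the other hand, by the first part of the proposition every specialization $(E_i\cdot K_\fq)_{t_0}/K_\fq$ has cyclic Galois group, which cannot equal the noncyclic group $A$.

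\textbf{Main obstacle.} The nontrivial technical point is the case when $v_{\fp'}(t_0-t_1)$ is divisible by $|I_{t_1}|$: the clean version of Theorem \ref{thm:kln} stated in the excerpt assumes coprimality, and one needs the more general statement of \cite[Theorem 4.2]{KLN} to cover all $t_0$ meeting $t_1$. The key point is that even in this general version, the decomposition group at $t_0$ remains contained in the cyclic tame inertia $I_{t_1}$ once $D_{t_1,\fp'} = 1$, so cyclicity still follows. Apart from this, the argument is a straightforward bookkeeping of which conditions on $\fq$ one needs and how to encode them via completely split behavior in an auxiliary number field $L$.
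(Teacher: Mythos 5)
Your strategy is sound, and it is essentially the argument behind the result: note that the paper itself does not reprove Proposition \ref{prop:locdim_not1} but quotes it from \cite{KLN}, and your write-up reconstructs that proof from Theorem \ref{thm:kln} — unramified specializations are cyclic, while for $t_0$ meeting a branch point $t_1$ at $\fq$ the choice of $L$ (containing the Galois closures of the residue extensions $(E_i(t_1))_{t_1}$ and the fields $K(t_1)$) forces $D_{t_1,\fp'}=1$, so the local Galois group sits inside the cyclic tame inertia. Your flagging of the coprimality hypothesis and its removal via \cite[Theorem 4.2]{KLN} matches exactly what the paper's footnote to Theorem \ref{thm:kln} indicates, and the Kummer-theoretic deduction of the second assertion is fine.

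The one point you pass over silently — and it is precisely the point the paper singles out in the remark following the proposition — is that Theorem \ref{thm:kln} and \cite[Theorems 4.1--4.2]{KLN} are proved under the standing assumption of \S\ref{sec:dec} that $E/K(t)$ is $K$-regular, whereas the $E_i/K(t)$ here are arbitrary Galois extensions, possibly with nontrivial constant field $K_i'$. Applying the theorem as stated to such $E_i$ is not justified. The gap is repairable along the lines of the paper's remark: work with the $K_i'$-regular extension $E_i/K_i'(t)$. In your construction this costs little, because the residue extension at any branch point contains $K_i'(t_1)$, so your $L$ already contains $K_i'$ whenever $E_i/K(t)$ is ramified; a prime $\fq$ split completely in $L$ then admits a degree-one prime $\fq'$ of $K_i'$ with $(K_i')_{\fq'}\cong K_\fq$, and the regular-case theorem applied over $K_i'$ yields the cyclicity of $(E_i\cdot K_\fq)_{t_0}/K_\fq$. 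Relatedly, in your first case (``$t_0$ meets no branch point, hence the specialization is unramified'') the correct citation is the specialization inertia theorem together with the containments established in the proof of Theorem \ref{thm:kln}: one needs $\fq$ outside $\mathcal{S}_{\mathrm{bad}}$, in particular $\fq$ unramified in $K_i'/K$ (vertical ramification), which does hold because such primes lie in $\mathcal{S}_R(P)\subseteq\mathcal{S}_0(E_i/K(t))$ and are excluded by your auxiliary fields $L_\fp$ — but this has to be invoked, since \'etaleness of the cover above $t_0$ mod $\fq$ alone does not rule out ramification coming from the constant field extension (e.g.\ when $E_i=K_i'(t)$ has no branch points at all).
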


\begin{rem}
In \cite{KLN},  these Galois extensions $E_i/K(t)$ were assumed $K$-regular. That assumption is however not necessary for the assertions given here. Indeed, if $K'$ is the full constant field of a $G$-extension $E/K(t)$, then the first assertion {of Proposition \ref{prop:locdim_not1} holds 
for the $K'$-regular extension $E/K'(t)$, yielding a field $L\supseteq K'$}. If  $\fq$ is a prime of $K$ completely split in $L$, then any prime $\fq'$ extending $\fq$ in $K'$ is of degree $1$ and split in $L$, whence $(E\cdot  K_\fq)_{t_0}/K_\fq$ must be cyclic for all non-branch points $t_0\in K_\fq (\cong K'_{\fq'})$.
\end{rem}

\begin{proof}[Proof of Theorem \ref{thm:locdim_not1}]
Proposition \ref{prop:locdim_not1} implies that no finite set of $G$-extensions $E_i/K(t)$, $i=1,\ldots,r$ can yield all the necessary local extensions via specializations. 
To prove Theorem \ref{thm:locdim_not1}, 
we generalize this to finite sets of $G$-extensions $E_i/F_i$, $i=1,\ldots,r$ where $F_i$ is a  
function field in one variable over $K$. 

Since the set of primes splitting completely in finitely many given finite extensions of $K$ is of positive density by Chebotarev's density theorem, it suffices to consider one extension $E_i/F_i$ at a time.
Thus, let $G$ be a finite group possessing a subgroup $A\cong C_p\times C_p$ for some prime $p$, and let $E/F$ be a $G$-extension, with $F$ a finite 
extension of $K(t)$. Consider the Galois closure $\Omega/K(t)$ of $E/K(t)$. This has Galois group embedding into $G\wr H$ where $H$ is the Galois group of the Galois closure of $F/K(t)$. Applying Proposition \ref{prop:locdim_not1} for $\Omega/K(t)$ gives a positive density set $\mathcal{S}$ of primes of $K$ such that  $K_{\fq}$ has an $A$-extension for each $\fq\in \mathcal{S}$, whereas all specializations of $\Omega/K(t)$ at some $t_0\in K_\fq$ have cyclic decomposition group. In particular, for every $K_\fq$-rational place of $F$, the corresponding specialization of $E\cdot K_\fq/F\cdot K_\fq$ has cyclic decomposition group, and a fortiori does not have decomposition group $A$. 
\end{proof}

\begin{rem}
There remains the question which finite groups precisely have local dimension $1$ (over a given number field). Theorem \ref{thm:locdim_not1} reduces the candidate groups to the class of groups in which all Sylow subgroups are cyclic or generalized quaternion (see, e.g., \cite[Chapter XI, Theorem 11.6]{CE}).
In \cite{Koe_dim1}, we will reduce the list much further and in fact achieve a full classification of groups $G$ of local dimension $1$ over (e.g.)  {$\mQ$}. 
{In this case},  $G$ is either (a) cyclic of order $2$ or odd prime power order; or (b) a semidirect product $C\rtimes D$ of two cyclic groups $C,D$ as in (a), with faithful semidirect product action. 
\end{rem}

\begin{rem}
\label{rem:ret}
It would also be interesting to investigate invariants such as $\ld$ or $\hgd$ over fields $K$ other than number fields. Just to give one simple, yet interesting example: If  $K=\mathbb C(x)$ is the fraction field of the Dedekind domain $\mathbb C[x]$, then any finite group $G$ has local dimension $1$ over $K$, as a direct consequence of Riemann's existence theorem: Indeed, every local extension is now of the form $\mathbb{C}(((x-c)^{1/e})) / \mathbb{C}((x-c))$ for some $c\in \mathbb{P}^1(\mathbb{C})$. Choose any $G$-extension $L/\mathbb{C}(t)$ in which a representative of every nontrivial conjugacy class of $G$ occurs as an inertia group generator at some branch point. Then setting $E:=L(x)$, the $G$-extension $E/K(t)$ reaches all local $G$-extensions at all (finite) primes of $K$, simply by specializing $t\mapsto x-c$ with suitable $c\in \mathbb{C}$. On the other hand, \cite{DKLN1} shows that (with very few exceptions) $E/K(t)$ is very far away from reaching all $G$-extensions of $K$ itself by specialization, suggesting again a discrepancy between local and parametric dimension in this case.
\end{rem}


\bibliographystyle{plain}

\end{document}